\newtheorem{claim}{}[section]
\newtheorem{theorem}[claim]{Theorem}
\newtheorem{lemma}[claim]{Lemma}
\newtheorem{proposition}[claim]{Proposition}
\newtheorem{corollary}[claim]{Corollary}
\def\proclaim #1. #2\par{\medbreak
\noindent{\bf#1.\enspace}{\sl#2}\par\medbreak} \makeatother
\newcommand{\co}{\mathcal{Q}}
\newcommand{\lra}{\longrightarrow}
\newcommand{\M}{\mathcal{M}}
\DeclareMathOperator{\Cdb}{\mathbb{C}}
\DeclareMathOperator{\Bdb}{\mathbb{B}}
\DeclareMathOperator{\Ddb}{\mathbb{D}}
\DeclareMathOperator{\Kdb}{\mathbb{K}}
\begin{document}

\title[Extensions of operator algebras I]
{Extensions of operator algebras I}
\date{July 31, 2007}

\author{David P. Blecher}
\address{Department of Mathematics, University of Houston, Houston, TX
77204-3008} \email[David P. Blecher]{dblecher@math.uh.edu}
\author{Maureen K. Royce}
\address{Department of Mathematics, University of Houston, Houston, TX
77204-3008} \email[Maureen Royce]{royce@math.uh.edu}

\begin{abstract}
We transcribe a portion of the theory of extensions of
$C^{*}$-algebras  to general operator algebras. We also include
several new general facts about approximately unital ideals in
operator algebras and the $C^*$-algebras which they generate.
\end{abstract}

\maketitle

\let\text=\mbox

\section{Introduction}

By an operator algebra, we mean a closed, not necessarily
selfadjoint, algebra of operators on a Hilbert space.
Our purpose here is to transcribe as much as possible of the
powerful and important $C^*$-algebraic theory of extensions, to
general operator algebras, where hopefully it will also play a role.
 Although there is
no requirement on our algebras to have any kind of identity or
approximate identity, we assume for specificity 
that all {\em ideals} of operator algebras in this
paper, and therefore also the `first term' in any extension, are
have an approximate identity. This is often not the most
interesting case (for example, nontrivial ideals in the free
semigroup operator algebras which have been studied extensively
recently, will generically have no kind of identity). However, it is
the case that is closest to the rich $C^*$-algebra theory of
extensions; it is of course no restriction at all in the case of
extensions of operator algebras by $C^*$-algebras. We remark that
some of our results have variants valid for Banach algebras which we
also have not seen in the literature, which may lend further  
justification for our endeavour.
In addition to the theory of extensions, we include several new
general facts of interest.

We now describe the contents of the paper.  In Section 2 we give
several new results about ideals in operator algebras, and about
 generated $C^*$-algebras, which will be used in later sections,
 and which are independently interesting. In
 Section 3,
we describe the basic theory of extensions of operator algebras,
following in large part the diagrammatic approach of Eilers, Loring,
and Pedersen \cite{ELP}. In parts of this section, aspects which are
very similar to the $C^*$-algebra or Banach algebra case are
described quite hastily; a more thorough exposition is given in the
second author's thesis \cite{Mth} (actually much of the contents of
the paper is amplified there, together with additional results).
In Section 4, we discuss how extensions of operator algebras are
related to extensions of containing $C^*$-algebras.  In a sequel paper
we will apply the contents of this paper to notions such as
{\em semisplit} extensions, variants of the $Ext$
semigroup (or group) bivariate functor, and exactness
of operator algebras.
Some further
developments will also be contained in  \cite{Mth},
 for example nonselfadjoint variants of some other results
from \cite{ELP,Peds,Ped2}.

An operator algebra may be thought of as a closed subalgebra of a
$C^{*}$-algebra.  We refer the reader to \cite[Chapter 2]{BLM} for
the basic facts and notations which we shall need concerning
operator algebras, such as the ones below.  A few of these may also
be found in \cite{Pnbook}.
An operator algebra is
 {\em unital} if it has an
identity of norm $1$, and is  {\em approximately unital} if it has a
contractive two-sided approximate identity (cai).
All ideals are
assumed to be two-sided and closed. By a {\em morphism} we mean a
linear completely contractive homomorphism $\theta : A \to B$
between operator algebras.   If $\theta(1) = 1$ we say that $\theta$
is {\em unital}.
 If $\theta$  takes some cai for $A$ to a cai for $B$ then we say that
 $\theta$ is {\em proper}.   We write $A^1$ for Meyer's unitization
 of a nonunital operator algebra (see \cite{Mey} or \cite[Section 2.1]{BLM}).
 The multiplier algebra of $A$
will be denoted as $\M(A)$ (see \cite[Section 2.6]{BLM}),  and
$\co(A)$ will denote the corona algebra $\M(A)/A$ of $A$, which is a
unital operator algebra (it is $(0)$ if $A$ is unital). We write
$\pi_A$ for the canonical map $\M(A) \to \co(A)$.   If $A$ is an
ideal in $B$, then the canonical morphism from $B$ into $\M(A)$ will
be denoted by $\sigma$.  To say that $A$ is an {\em essential ideal}
in $B$ is to say that $\sigma$ is one-to-one. A proper morphism
$\alpha : A \to B$ extends canonically to a unital morphism
$\bar{\alpha} : \M(A) \to \M(B)$, and this induces a unital morphism
$\tilde{\alpha} : \co(A) \to \co(B)$.   Moreover $\bar{\alpha}$ is
completely isometric if and only if $\alpha$ is completely
isometric, and implies that $\tilde{\alpha}$ is completely isometric
(see Corollary \ref{tilde}).  A {\em $C^*$-cover} of an operator
algebra $B$ is a pair $({\mathcal E},j)$ consisting of a completely
isometric homomorphism $j$ from $B$ into a $C^*$-algebra ${\mathcal
E}$, such that ${\mathcal E}$ is generated as a $C^*$-algebra by
$j(B)$.  If $B$ has a cai $(e_t)$, then it follows that $(j(e_t))$
is a cai for ${\mathcal E}$.  There exists a natural ordering and
equivalence of $C^*$-covers of $B$, and a maximal and minimal
equivalence class, $C^*_{\rm max}(B)$ and $C^*_{\rm e}(B)$
respectively.  The latter is the {\em $C^*$-envelope} of $B$, and it
is a quotient of any other $C^*$-cover of $B$.  The former,
$C^*_{\rm max}(B)$,
 is characterized
by its universal property: every completely contractive homomorphism
$\pi : B \to {\mathcal D}$ into a $C^*$-algebra ${\mathcal D}$
extends to a $*$-homomorphism $C^*_{\rm max}(B) \to {\mathcal D}$.
We note that the maximal and minimal $C^*$-covers of $B$ may be
defined to be the $C^*$-algebra generated by $B$ inside the same
$C^*$-cover of the unitization $B^1$.  Although these
are only needed in the sequel paper, we will also consider the minimal
and maximal tensor products, $\otimes_{\rm min}$ (which we sometimes
write as $\otimes$) and $\otimes_{\rm max}$, of operator
algebras (see e.g.\ \cite[Chapter 6]{BLM}).

In this paper we will work with two main categories. The first is
the category {\bf OA} of all operator algebras, with morphisms the
completely contractive homomorphisms.   The second is the
subcategory {\bf AUOA} of approximately unital operator algebras,
with the same morphisms.  On one or two occasions one needs stronger
hypotheses for the smaller category (for example the pullback
construction in {\bf AUOA} needs additional restrictions in order to
reside in the same category). In any case, in both categories we
will need to assume that the first term in any {\em extension}
(defined below) is approximately unital.  Most parts of the paper
can be read twice, once for each of the above two categories.  Often
we will only state  the {\bf AUOA} case of our results; and leave
the other case to the reader. Actually, many of the results of this
paper have variants in eight categories, the other six being: the
variants of {\bf OA} and {\bf AUOA} where the morphisms are
contractive homomorphisms, and then the `up to constants' variants
of the last four categories, where in the description of the
morphisms we change the word `contractive' to `bounded', and replace
`cais' by `bais' (bounded approximate identities).  Indeed, many
of our results have obvious variants valid for (preferably,
Arens regular) Banach algebras which we have not
seen in the Banach algebras literature (which 
focuses on quite different directions, see e.g.\ \cite{BDL}).  
We will not usually take the time to state these other
cases and variants, we leave this to the
reader (see also \cite{Mth}), and for specificity restrict our
attention to the one closest to the $C^*$-algebra case. In any case,
we will use the terms morphism, subobject, quotient, etc, in the
obvious way. Thus, for example, a subobject in {\bf OA} is a closed
subalgebra, whereas in {\bf AUOA} it is an approximately unital
closed subalgebra.

We write $\Kdb, \Bdb$ for the compact and bounded operators on a
separable Hilbert space.   We say that an operator algebra $B$  is
{\em stable} if $B \cong B \otimes \Kdb$ completely isometrically
isomorphically.

\section{Ideals and $C^*$-covers}

It is of enormous importance in $C^*$-algebra theory, that a
$*$-homomorphism (or equivalently, a contractive homomorphism) on a
$C^*$-algebra has closed range, and is a complete quotient map onto
its range. For nonselfadjoint algebras, this is not at all the case.
To see this, recall that contractive Banach space maps need not have
closed range, and even if they do, they need not induce isometries
after quotienting by their kernel.  These facts, combined with the
${\mathcal U}(X)$ construction from 2.2.10--2.2.11 of \cite{BLM},
then implies the same facts for contractive or completely
contractive homomorphisms between operator algebras.  Nonetheless,
the following results in this direction will be useful:

\begin{lemma} \label{try}  Suppose that $\theta : {\mathcal E} \to
{\mathcal F}$ is a morphism between operator algebras (that is, a
morphism in {\bf OA}), which is also a complete quotient map.
Suppose that $B$ is a closed subalgebra of ${\mathcal E}$, and there
is a cai for ${\rm Ker}(\theta)$ which either (a)\ lies in $B$, or
(b)\ has a weak* limit point in $B^{\perp \perp}$. Then $\theta(B)$
is closed, and $\theta_{|B}$ is a complete quotient map onto
$\theta(B)$. Indeed, $\theta({\rm Ball}(B)) = {\rm
Ball}(\theta(B))$, and similarly at all matrix levels.   We have
$\theta(B) \cong B/{\rm Ker}(\theta_{|B}) \subset {\mathcal E}/{\rm
Ker}(\theta)$ completely isometrically isomorphically.
\end{lemma}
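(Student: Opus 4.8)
The plan is to reduce the whole statement to properties of a single projection. First I would note that (a) implies (b): if a cai $(e_t)$ for ${\rm Ker}(\theta)$ lies in $B$, then it is a bounded net in $B^{\perp\perp}$, so by Banach--Alaoglu it has a weak$^*$ cluster point there; thus it suffices to treat case (b). Fix a $C^*$-cover of $\mathcal E$ and work inside its bidual, a $W^*$-algebra; let $p\in B^{\perp\perp}$ be the weak$^*$ cluster point supplied by (b). Since any cai of an approximately unital operator algebra converges weak$^*$ to the identity of its bidual, $p=1_{{\rm Ker}(\theta)^{**}}$; in particular $p$ is independent of the cai, and is the weak$^*$ limit (not merely a cluster point) of $(e_t)$. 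I would then record: $\|p\|\le 1$ (weak$^*$-lower-semicontinuity of the norm) and $p$ is idempotent, hence $p$ is a projection and $\|1-p\|\le 1$; since ${\rm Ker}(\theta)$ is an ideal, ${\rm Ker}(\theta)^{**}$ is an ideal of $\mathcal E^{**}$ and a short computation shows $p$ is central in $\mathcal E^{**}$ with ${\rm Ker}(\theta)^{**}=p\,\mathcal E^{**}$; and $\theta^{**}$ is weak$^*$-continuous and kills each $e_t$, so $\theta^{**}(p)=0$.

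The core is the claim that, for every $n$ and every $b\in M_n(B)$,
\[ \|b+M_n({\rm Ker}(\theta_{|B}))\| \;=\; \|\theta(b)\|_{M_n(\mathcal F)}. \]
The inequality ``$\le$'' is the point; ``$\ge$'' is automatic since $\theta$ is completely contractive and vanishes on ${\rm Ker}(\theta_{|B})$. For any $d\in M_n({\rm Ker}(\theta))$, applying $p$ diagonally kills $d$ (as $p$ is a left identity on ${\rm Ker}(\theta)$), so $(1-p)(b+d)=(1-p)b$ in this matricial sense; hence $\|(1-p)b\|\le\|1-p\|\,\|b+d\|\le\|b+d\|$, and taking the infimum over $d$, and using that $\theta$ is a complete quotient map so that $\bar\theta:\mathcal E/{\rm Ker}(\theta)\to\mathcal F$ is a complete isometry, gives $\|(1-p)b\|\le\|\theta(b)\|$. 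On the other hand $b-(1-p)b=pb$ lies in $M_n(B^{\perp\perp})$ (as $B^{\perp\perp}$ is a subalgebra containing $p$ and $b$) and in $M_n(p\,\mathcal E^{**})=M_n({\rm Ker}(\theta)^{**})$. The step I expect to be the main obstacle is to pass from here to $pb\in M_n({\rm Ker}(\theta_{|B})^{\perp\perp})$: this is the identity $B^{\perp\perp}\cap{\rm Ker}(\theta)^{**}=(B\cap{\rm Ker}(\theta))^{\perp\perp}$, which I would expect to be one of the general facts about approximately unital ideals established in Section~2. (In case (a) it is immediate: the given cai lies in ${\rm Ker}(\theta_{|B})$ and is also a cai for it, so $p=1_{{\rm Ker}(\theta_{|B})^{**}}\in{\rm Ker}(\theta_{|B})^{\perp\perp}$ at once, which is why the two hypotheses are grouped together.) Granting this, $pb\in M_n({\rm Ker}(\theta_{|B}))^{\perp\perp}$, so $\|b+M_n({\rm Ker}(\theta_{|B}))\|\le\|b-pb\|=\|(1-p)b\|\le\|\theta(b)\|$, proving the claim.

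The claim says exactly that the inclusion $B\hookrightarrow\mathcal E$ induces a complete isometry $B/{\rm Ker}(\theta_{|B})\to\mathcal E/{\rm Ker}(\theta)$; composing with $\bar\theta$ identifies $B/{\rm Ker}(\theta_{|B})$ completely isometrically with $\theta(B)\subseteq\mathcal F$. Since $B/{\rm Ker}(\theta_{|B})$ is complete, $\theta(B)$ is closed, and $\theta_{|B}$, being the composite of the complete quotient map $B\to B/{\rm Ker}(\theta_{|B})$ with this completely isometric isomorphism, is a complete quotient map onto $\theta(B)$. Finally, for $\theta({\rm Ball}(B))={\rm Ball}(\theta(B))$ and its matrix analogues one needs norm-preserving lifts through $\theta_{|B}$, that is, complete proximinality of ${\rm Ker}(\theta_{|B})$ in $B$; but ${\rm Ker}(\theta_{|B})$ is an approximately unital ideal of $B$ (its bidual has identity $p$), and approximately unital ideals in operator algebras are complete $M$-ideals, hence completely proximinal.
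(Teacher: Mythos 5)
Your overall strategy is the same as the paper's: pass to the support projection $p$ of ${\rm Ker}(\theta)$ in the bidual, use that $1-p$ multiplies completely contractively and kills ${\rm Ker}(\theta)^{\perp\perp}$ to get $\Vert (1-p)b\Vert \le \Vert \theta(b)\Vert$, identify $B/{\rm Ker}(\theta_{|B})$ inside ${\mathcal E}/{\rm Ker}(\theta)$, and invoke proximinality of approximately unital ideals ($M$-ideals) for the ball statement. But there is a genuine gap, and it sits exactly where you flag it. The identity $B^{\perp\perp}\cap{\rm Ker}(\theta)^{\perp\perp}=(B\cap{\rm Ker}(\theta))^{\perp\perp}$ is \emph{not} a formal bidual fact (for general subspaces one only has the inclusion $\supseteq$ for free), and it is not ``established earlier in Section 2'' --- this lemma \emph{is} the first result of Section 2. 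Under hypothesis (b) alone, the needed fact is precisely the content of the ``seemingly deep result'' of Hay and Blecher--Hay--Neal that the paper cites: having a weak* limit point of a cai for ${\rm Ker}(\theta)$ in $B^{\perp\perp}$ forces $B\cap{\rm Ker}(\theta)$ to contain a cai for ${\rm Ker}(\theta)$. So the step cannot be waved through as routine.

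Compounding this, your opening reduction goes the wrong way. Reducing (a) to (b) is trivial but useless: it moves you from the case where your ``main obstacle'' is easy (your own parenthetical shows that in case (a) the common cai gives $p\in{\rm Ker}(\theta_{|B})^{\perp\perp}$, and then $bp\in{\rm Ker}(\theta_{|B})^{\perp\perp}$ since the latter is an ideal in $B^{\perp\perp}$) into the case where it requires the deep theorem. The paper does the opposite: it invokes the equivalence (b) $\Leftrightarrow$ (a) so that one may always assume a common cai for ${\rm Ker}(\theta)$ and ${\rm Ker}(\theta_{|B})$, after which everything you wrote goes through. If you either cite that equivalence for the direction (b) $\Rightarrow$ (a), or simply restrict attention to case (a) and quote the equivalence for (b), your argument is correct and essentially identical to the paper's.
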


\begin{proof}
Hypotheses (a) and (b) are equivalent (this follows
from a seemingly deep result in \cite{Hay,BHN}).  Let $C = \theta(B),
{\mathcal D}= {\rm Ker}(\theta)$ and $A = {\rm Ker}(\theta_{|B})$,
these are ideals in ${\mathcal E}$ and $B$ respectively, and they
have a common cai. Let $p$ be a weak* limit of the cai in the second
dual; clearly $p \in B^{\perp \perp}$ and $p$ is the (central)
support projection of ${\mathcal D}$ in ${\mathcal E}^{**}$, and
also is the support projection of $A$ in $B^{**} \cong B^{\perp
\perp} \subset E^{**}$.
Writing $1$ for the identity of a unital operator algebra containing
${\mathcal E}$, we have ${\mathcal E}^{**} (1-p) \subset {\mathcal
E}^{**}$. Indeed the map $\eta \mapsto \eta (1-p) = \eta - \eta p$
is a completely contractive projection on ${\mathcal E}^{**}$, which
is a homomorphism, and its kernel is ${\mathcal D}^{\perp \perp}$.
We deduce that
$${\mathcal D}^{**} \cong {\mathcal D}^{\perp \perp} = {\mathcal
E}^{**} p, \; \; {\mathcal F}^{**} \cong {\mathcal E}^{**}/{\mathcal
D}^{\perp \perp} \cong {\mathcal E}^{**} (1-p) ,$$ and so
$${\mathcal E}^{**} = {\mathcal E}^{**} p \oplus^\infty {\mathcal E}^{**} (1-p)
\cong {\mathcal D}^{**} \oplus^\infty {\mathcal F}^{**} .$$
Similarly, $$A^{**} \cong A^{\perp \perp} = B^{\perp \perp} p , \;
\;  (B/A)^{**} \cong B^{\perp \perp}/B^{\perp \perp} p \cong
B^{\perp \perp} (1-p) .$$   The composition of the canonical
complete contractions $B/A \to {\mathcal E}/{\mathcal
 D} \to  {\mathcal E}^{**} (1-p)$, agrees with
 the composition of the canonical
complete isometries $B/A \to B^{\perp \perp} (1-p) \to {\mathcal
E}^{**} (1-p)$.
Thus the map from $B/A$ to ${\mathcal E}/{\mathcal D}$ is a complete
isometry.  Composing it with the complete isometry ${\mathcal
E}/{\mathcal D} \to {\mathcal F}$, we obtain a complete isometry
$B/A \to {\mathcal F}$. It is easy to see that this coincides with
the composition of the canonical map $\widetilde{\theta_{\vert B}} :
B/A \to C$ induced by $\theta_{\vert B}$, and the inclusion map $C
\hookrightarrow {\mathcal F}$. It follows that
$\widetilde{\theta_{\vert B}}$ is a complete isometry and has closed
range, so that $\theta_{\vert B}$ is a complete quotient map with
closed range.  The assertion that $\theta({\rm Ball}(B)) = {\rm
Ball}({\rm Ran}(\theta))$ follows from the fact that approximately
unital ideals in (we may assume, unital) operator algebras are
$M$-ideals, and hence are proximinal (see e.g.\ \cite[Section
4.8]{BLM} and \cite{HWW}). A similar assertion holds at all matrix
levels.
\end{proof}

\begin{lemma} \label{try2}  Suppose that $B$ is a closed
subalgebra of an operator algebra ${\mathcal E}$, that
${\mathcal D}, A$ are ideals in  ${\mathcal E}, B$ respectively,
with $A \subset {\mathcal D},$ and suppose that
there is a common cai for ${\mathcal D}$ and $A$. Then $B/A \subset
{\mathcal E}/{\mathcal D}$ completely isometrically isomorphically.
\end{lemma}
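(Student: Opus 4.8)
The plan is to reduce Lemma~\ref{try2} to Lemma~\ref{try} by producing, from the given data, a morphism in \textbf{OA} that is a complete quotient map and to which Lemma~\ref{try} applies. The natural candidate is the quotient map $\theta : {\mathcal E} \to {\mathcal E}/{\mathcal D}$, which is a complete quotient map and a homomorphism of operator algebras, with ${\rm Ker}(\theta) = {\mathcal D}$. We then take the closed subalgebra $B \subseteq {\mathcal E}$ and observe that ${\rm Ker}(\theta_{|B}) = B \cap {\mathcal D} \supseteq A$; the hope is that in fact $B \cap {\mathcal D} = A$, so that $\theta_{|B}$ has kernel exactly $A$.

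First I would verify the hypothesis of Lemma~\ref{try}, namely that there is a cai for ${\mathcal D} = {\rm Ker}(\theta)$ lying in $B$ (hypothesis (a)), or having a weak* limit point in $B^{\perp\perp}$ (hypothesis (b)). By assumption there is a \emph{common} cai $(e_t)$ for ${\mathcal D}$ and $A$; in particular $(e_t) \subseteq A \subseteq B$, so this is a cai for ${\mathcal D}$ lying in $B$, and hypothesis (a) of Lemma~\ref{try} is satisfied. Applying Lemma~\ref{try} to $\theta$ and $B$, we conclude that $\theta(B)$ is closed and that $\theta_{|B}$ is a complete quotient map onto $\theta(B)$, with $\theta(B) \cong B/{\rm Ker}(\theta_{|B}) \subseteq {\mathcal E}/{\mathcal D}$ completely isometrically.

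The remaining point — which I expect to be the main (though not deep) obstacle — is to identify ${\rm Ker}(\theta_{|B}) = B \cap {\mathcal D}$ with $A$. Clearly $A \subseteq B \cap {\mathcal D}$. For the reverse inclusion, let $b \in B \cap {\mathcal D}$. Since $(e_t)$ is a cai for ${\mathcal D}$ and $b \in {\mathcal D}$, we have $b e_t \to b$ (and $e_t b \to b$) in norm; but $e_t \in A$ and $b \in B$, and $A$ is an ideal in $B$, so $b e_t \in A$ for every $t$, whence $b \in \overline{A} = A$. Therefore $B \cap {\mathcal D} = A$, and the completely isometric isomorphism from Lemma~\ref{try} reads $B/A \cong \theta(B) \subseteq {\mathcal E}/{\mathcal D}$, which is exactly the assertion. (If one is working in \textbf{AUOA}, one should additionally note that $A$, having a cai, is an approximately unital ideal, and that the inclusion $B/A \hookrightarrow {\mathcal E}/{\mathcal D}$ respects the relevant structure; but the completely isometric isomorphism statement is already the substance of the claim.)
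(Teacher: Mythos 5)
Your proof is correct and matches the paper's approach: the paper disposes of Lemma~\ref{try2} with the words ``just as in the proof of the last result,'' and you make this precise by applying Lemma~\ref{try} as a black box to the quotient map $\theta:{\mathcal E}\to{\mathcal E}/{\mathcal D}$, whose kernel ${\mathcal D}$ has a cai lying in $A\subseteq B$, so hypothesis (a) holds. Your explicit verification that ${\rm Ker}(\theta_{|B})=B\cap{\mathcal D}=A$ via the common cai is exactly the small point the paper leaves implicit, and it is handled correctly.
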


\begin{proof}  Just as in the proof of the last result.
\end{proof}

 {\bf Remark.}
 \ There is a `one-sided' version of the
 last results and their proofs, where e.g.\ we have right ideals and left cai.
Again we get $B/A \hookrightarrow {\mathcal E}/{\mathcal D}$
completely isometrically.  Similarly, $B/A \hookrightarrow {\mathcal E}/{\mathcal D}$
 in the Banach algebra variant.

\begin{corollary}  \label{tilde}  If $A$ is a closed subalgebra
of an operator algebra $B$, and if they have a common cai, then
$\co(A) \subset \co(B)$ completely isometrically, via the map
$\tilde{\alpha}$ described in the introduction, taking $\alpha : A
\to B$ to be the inclusion.
\end{corollary}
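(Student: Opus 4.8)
The plan is to reduce this to Lemma~\ref{try2}. Since $A$ is a closed subalgebra of $B$, the inclusion $\alpha : A \to B$ is completely isometric; and since $A$ and $B$ have a common cai, $\alpha$ is proper (a common cai for $A$ and $B$ is in particular a cai for $A$ whose image under $\alpha$ is a cai for $B$). By the facts recalled in the introduction, $\alpha$ therefore extends to a unital morphism $\bar{\alpha} : \M(A) \to \M(B)$, which is completely isometric precisely because $\alpha$ is, and which induces $\tilde{\alpha} : \co(A) \to \co(B)$ fitting into the commuting diagram relating the short exact sequences $0 \to A \to \M(A) \to \co(A) \to 0$ and $0 \to B \to \M(B) \to \co(B) \to 0$. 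Being completely isometric, $\bar{\alpha}$ has closed range, so we may identify $\M(A)$ with the closed subalgebra $\bar{\alpha}(\M(A))$ of $\M(B)$; under this identification $A$ is an ideal of $\M(A)$ contained in the ideal $B$ of $\M(B)$, and $A$ maps into $B$ consistently with the given inclusion.

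Next I would apply Lemma~\ref{try2}, with ${\mathcal E}$ there taken to be $\M(B)$, with the closed subalgebra taken to be $\M(A)$, with ${\mathcal D}$ taken to be $B$, and with the subideal taken to be $A$. Its hypotheses hold: $A \subset B$; $A$ is an ideal in $\M(A)$; $B$ is an ideal in $\M(B)$; and $A$ and $B$ share a cai by assumption. Hence the canonical map $\M(A)/A \to \M(B)/B$, that is $\co(A) \to \co(B)$, is completely isometric, so that $\co(A) \subset \co(B)$ completely isometrically.

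It remains to observe that this canonical embedding is exactly $\tilde{\alpha}$. By the construction in the proof of Lemma~\ref{try} (on which Lemma~\ref{try2} rests), the complete isometry $\M(A)/A \to \M(B)/B$ it produces is the map induced on quotients by the inclusion $\M(A) \hookrightarrow \M(B)$; but this inclusion is $\bar{\alpha}$, and the map it induces on $\co(A) = \M(A)/A \to \M(B)/B = \co(B)$ is by definition $\tilde{\alpha}$. I do not anticipate a genuine obstacle here; the only point needing care is the bookkeeping of the several identifications, so that the abstract embedding furnished by Lemma~\ref{try2} is correctly recognized as the map $\tilde{\alpha}$ from the introduction. (In the degenerate case where $A$ is unital one has $\co(A) = (0)$ and nothing to prove; note that if $B$ is unital then, since a common cai for $A$ and $B$ must converge in norm to the identity of $B$, which then lies in the closed subalgebra $A$, the algebra $A$ is unital too.)
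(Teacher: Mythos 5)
Your proof is correct and is essentially the argument the paper intends: the corollary is stated as an immediate consequence of Lemma~\ref{try2}, applied exactly as you do with ${\mathcal E} = \M(B)$, subalgebra $\bar{\alpha}(\M(A))$, and ideals $B$ and $A$ sharing the common cai, together with the background fact from the introduction that $\bar{\alpha}$ is completely isometric when $\alpha$ is. The identification of the resulting embedding $\M(A)/A \to \M(B)/B$ with $\tilde{\alpha}$ is the right (and only) bookkeeping point, and you handle it correctly.
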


\begin{lemma} \label{id}  If  $A$ is a closed approximately unital ideal in
a closed subalgebra $B$ of a $C^*$-algebra ${\mathcal E}$, then the
$C^*$-subalgebra of ${\mathcal E}$ generated by $A$ is a two-sided
ideal in the $C^*$-subalgebra of ${\mathcal E}$ generated by $B$.
\end{lemma}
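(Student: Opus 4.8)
The plan is to work in the bidual of $\mathcal{B} := C^{*}_{\mathcal{E}}(B)$ and to realize $\mathcal{A} := C^{*}_{\mathcal{E}}(A)$ as a central summand of it. Note first that $A \subseteq \mathcal{B}$, so $\mathcal{A} \subseteq \mathcal{B}$, and it suffices to show $\mathcal{A}$ is a two-sided ideal of $\mathcal{B}$. I would fix a cai $(e_{t})$ for $A$ and, after passing to a subnet, let $p \in \mathcal{B}^{**}$ be its weak* limit. A routine computation (cf. the proof of Lemma \ref{try}) gives $p a = a p = a$ for all $a \in A$; since then $p e_{t} = e_{t}$ and multiplication is separately weak*-continuous on the von Neumann algebra $\mathcal{B}^{**}$, we get $p^{2} = p$, and $\|p\| \leq 1$, so $p$ is a projection in $\mathcal{B}^{**}$.

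The second step is to show $p$ is central in $\mathcal{B}^{**}$. Because $A$ is an ideal in $B$, for each $b \in B$ we have $e_{t} b \in A$ and $b e_{t} \in A$, hence $(e_{t} b) p = e_{t} b$ and $p(b e_{t}) = b e_{t}$ for every $t$; passing to the weak* limit and again using separate weak*-continuity of the product, $p b p = p b$ and $p b p = b p$, so $p b = b p$. Thus $p$ commutes with $B$, hence (as $p = p^{*}$) with $B^{*}$, hence with the $C^{*}$-algebra $C^{*}(B) = \mathcal{B}$, and hence with $\mathcal{B}^{**}$ by weak*-density. Consequently $p \mathcal{B}^{**}$ is a weak*-closed two-sided ideal of $\mathcal{B}^{**}$.

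The main step, and the one I expect to carry the real content, is the identity $\mathcal{A}^{\perp\perp} = p \mathcal{B}^{**}$, where $\mathcal{A}^{\perp\perp}$ is the weak* closure of $\mathcal{A}$ in $\mathcal{B}^{**}$ (a von Neumann subalgebra, being the weak* closure of a $C^{*}$-algebra). The inclusion $\subseteq$ is immediate: $p c = c$ for $c \in A \cup A^{*}$, hence for all $c \in \mathcal{A}$, and $p \mathcal{B}^{**}$ is weak* closed. For $\supseteq$, I would observe that, $A$ being an ideal in $B$, for every $b \in B$ the element $p b$ is the weak* limit of the net $(e_{t} b)$, which lies in $A$; so $p b \in A^{\perp\perp} \subseteq \mathcal{A}^{\perp\perp}$, and therefore $\mathcal{A}^{\perp\perp}$ contains the von Neumann subalgebra of $\mathcal{B}^{**}$ generated by $p B$. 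Finally, since $p$ is a central projection commuting with $B$ and $B^{*}$, the $*$-algebra generated by $p B$ is precisely $p$ times the $*$-algebra generated by $B$, and the latter is norm-dense in $\mathcal{B}$; multiplying by $p$ and using that $p \mathcal{B}^{**}$ is weak* closed, this $*$-algebra is weak* dense in $p \mathcal{B}^{**}$, so that von Neumann subalgebra equals $p \mathcal{B}^{**}$. Hence $\mathcal{A}^{\perp\perp} = p \mathcal{B}^{**}$.

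The conclusion then drops out: $\mathcal{A}^{\perp\perp} = p \mathcal{B}^{**}$ is a two-sided ideal of $\mathcal{B}^{**}$, and $\mathcal{A} = \mathcal{A}^{\perp\perp} \cap \mathcal{B}$ since $\mathcal{A}$ is norm-closed in $\mathcal{B}$; so for $b \in \mathcal{B}$ and $c \in \mathcal{A}$ we get $b c, c b \in \mathcal{A}^{\perp\perp} \cap \mathcal{B} = \mathcal{A}$. The obstacle worth flagging is why one is driven to the bidual at all: trying to prove the lemma directly, by verifying $b \cdot C^{*}(A) \subseteq C^{*}(A)$ for $b \in B$, reduces to showing that the mixed products $b a^{*}$ (with $b \in B$, $a \in A$) lie in $C^{*}(A)$, and these are not controlled by the ideal relations $A B, B A \subseteq A$ and resist the obvious manipulations with the cai; passing to $\mathcal{B}^{**}$, where centrality of $p$ forces $\mathcal{A}^{\perp\perp}$ to exhaust the whole summand $p \mathcal{B}^{**}$, is exactly what gets around this.
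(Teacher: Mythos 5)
Your proof is correct, but it takes a genuinely different route from the paper's. You pass to the bidual $\mathcal{B}^{**}$, show the weak* limit $p$ of the cai is a central projection, identify $\mathcal{A}^{\perp\perp}$ with the summand $p\mathcal{B}^{**}$, and pull the ideal property back down via $\mathcal{A}=\mathcal{A}^{\perp\perp}\cap\mathcal{B}$; every step checks out (the only points needing care --- separate weak* continuity of the Arens product, a contractive idempotent in a $C^*$-algebra being self-adjoint, and the bipolar identity $\mathcal{A}^{\perp\perp}\cap\mathcal{B}=\mathcal{A}$ --- are all standard). The paper instead gives a two-line direct argument: by 2.1.6 of \cite{BLM} a cai $(e_t)$ for $A$ is automatically an approximate identity for the $C^*$-algebra generated by $A$, so $e_t a^*\to a^*$, whence $ba^*=\lim_t (be_t)a^*\in\overline{AA^*}$ since $be_t\in A$, and the other mixed products $b^*a$, $ab^*$, $a^*b$ are handled likewise (or by taking adjoints); the result then follows by reducing words. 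So your closing remark that the mixed products ``resist the obvious manipulations with the cai'' is not quite right --- the missing ingredient is exactly the fact that a cai for $A$ is a cai for $C^*(A)$, which is what unblocks the direct computation. What each approach buys: the paper's is shorter but leans on that cited fact; yours is self-contained modulo standard bidual technology and makes explicit the central support projection $p$ with $\mathcal{A}^{\perp\perp}=p\mathcal{B}^{**}$, which is precisely the picture the paper exploits elsewhere (Lemmas \ref{try} and \ref{cenvi}).
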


\begin{proof}    By 2.1.6 in
\cite{BLM}, if $(e_t)$ is a cai in $A$ then for $b \in B, a \in A$
we have $b a^* = \lim_t \, b e_t a^* \in \overline{A A^*}$.
Similarly, $b^* a, a b^*, a^* b$ lie in the $C^*$-subalgebra
 generated by $A$, from which the result is clear.
\end{proof}

\begin{lemma} \label{cenvi} If $A$ is a closed approximately unital ideal in
an operator algebra $B$, then $C^*_e(A)$ is a closed approximately
unital ideal in $C^*_e(B)$.  More specifically, the $C^*$-subalgebra
of $C^*_e(B)$ generated by $A$ is a  $C^*$-envelope of $A$.
\end{lemma}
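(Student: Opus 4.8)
The plan is first to exhibit a concrete candidate for $C^*_e(A)$ sitting inside $C^*_e(B)$, and then to verify that it really is the $C^*$-envelope by showing its Shilov (boundary) ideal is zero. Regard $A \subseteq B \subseteq C^*_e(B)$ completely isometrically, and let $D$ be the $C^*$-subalgebra of $C^*_e(B)$ generated by $A$. By Lemma~\ref{id}, $D$ is a closed two-sided ideal of $C^*_e(B)$. If $(e_t)$ is a cai for $A$, then, as recalled in the introduction, $(e_t)$ is also a cai for the $C^*$-algebra $D$ which it generates; hence $(D,\iota)$, with $\iota:A\hookrightarrow D$ the inclusion, is a $C^*$-cover of $A$, the algebra $A$ is an approximately unital ideal of $D$, and $D$ and $A$ have a common cai. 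One also checks, exactly as in the proof of Lemma~\ref{try2}, that $B\cap D = A$: if $b\in B\cap D$ then $e_t b\in A$ for each $t$ (since $A$ is an ideal of $B$) and $b = \lim_t e_t b$ (since $(e_t)$ is a cai for $D$), so $b\in A$. Since $D$ is a $C^*$-algebra it is automatically approximately unital, so the first assertion of the lemma will follow once we establish the ``more specifically'' statement, namely that $(D,\iota)$ is the $C^*$-envelope of $A$.

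To prove $D = C^*_e(A)$, let $I$ be the Shilov boundary ideal of $A$ in $D$, so that $D/I = C^*_e(A)$ and the restriction of $q_I:D\to D/I$ to $A$ is completely isometric; we must show $I=(0)$. The first point is that $I$ is an ideal not only of $D$ but of $C^*_e(B)$, since a closed ideal of a closed ideal of a $C^*$-algebra is a closed ideal of the whole algebra: for $x\in I$ and $c\in C^*_e(B)$ we have $cx = \lim_t (c e_t) x \in I$, because $c e_t\in D$ and $I$ is an ideal of $D$, and similarly on the other side. Thus $q_I$ extends to a surjective $*$-homomorphism $C^*_e(B)\to C^*_e(B)/I$, still written $q_I$. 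The plan is to show that $q_I$ is completely isometric on $B$; since $C^*_e(B)$ is the $C^*$-envelope of $B$, it has no nonzero boundary ideal for $B$, so this forces $I=(0)$ and finishes the proof.

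The crux is therefore to show $q_I|_B$ is a complete isometry. It is at least one-to-one: ${\rm Ker}(q_I|_B) = B\cap I \subseteq (B\cap D)\cap I = A\cap I = (0)$, the last equality because $q_I|_A$ is one-to-one. For the isometry, put $B' = \overline{q_I(B)}$ and $A' = q_I(A)$ (the latter already closed, being a completely isometric copy of $A$); then $A'$ is an approximately unital ideal of the operator algebra $B'$, with cai $(q_I(e_t))$. Using that approximately unital ideals in operator algebras are $M$-ideals (as in the proof of Lemma~\ref{try}), we have $B^{**} = A^{\perp\perp}\oplus^\infty B^{**}(1-p)$ and $B'^{**} = A'^{\perp\perp}\oplus^\infty B'^{**}(1-p')$, where $p,p'$ are the (central) support projections of $A,A'$ in $B^{**},B'^{**}$. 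For $b\in B$ this gives $\|q_I(b)\| = \max\{\,\|q_I(b)p'\|,\,\|q_I(b)(1-p')\|\,\}$. Now $p'$ is the weak* limit of $(q_I(e_t))$ in $B'^{**}$, and since $q_I$ is multiplicative, $q_I(b)p' = \lim_t q_I(be_t)$ lies in $A'^{\perp\perp}$ and equals $(q_I|_A)^{**}(bp)$; as $q_I|_A$ is completely isometric, $\|q_I(b)p'\| = \|bp\|$. Also $\|q_I(b)(1-p')\| = \|b+A\|_{B/A}$: the composite $B\to B'/A'$ kills exactly $A$, hence factors through a one-to-one map $B/A\to B'/A'$, and this map is completely isometric because composing it with the completely isometric embedding $B'/A'\hookrightarrow C^*_e(B)/D$ furnished by Lemma~\ref{try2} (applied inside $C^*_e(B)/I$, with $D' = q_I(D)$, and using $(C^*_e(B)/I)/D' \cong C^*_e(B)/D$) recovers the completely isometric embedding $B/A\hookrightarrow C^*_e(B)/D$ of Lemma~\ref{try2}. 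Combining, $\|q_I(b)\| = \max\{\|bp\|,\|b(1-p)\|\} = \|b\|$, and the same computation at every matrix level shows $q_I|_B$ is completely isometric.

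I expect the main obstacle to be precisely this last step: deducing, from the two easy facts that $q_I$ is completely isometric on the ideal $A$ and induces a completely isometric map $B/A\to q_I(B)/q_I(A)$, that $q_I$ is completely isometric on $B$ itself. This needs the $M$-ideal (i.e.\ $\ell^\infty$) splitting of $B^{**}$ together with some careful bookkeeping to identify the induced map on quotients using Lemma~\ref{try2} both ``upstairs'' and in the quotient $C^*_e(B)/I$. By contrast, the ideal-of-an-ideal observation, the identity $B\cap D = A$, and the reduction to vanishing of the Shilov ideal are all routine.
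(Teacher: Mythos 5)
Your proof is correct, but it takes a genuinely different route from the paper's. The paper first reduces to the case where $B$ is unital, disposes of the special case $A = Bp$ for a central projection $p$ (where $C^*_{\rm e}(B)p$ is visibly a $C^*$-envelope of $A$), and then passes to second duals: taking a $C^*$-envelope $(D,j)$ of $B^{**}$, it notes that $Dj(p)$ is a $C^*$-envelope of $A^{**} \cong B^{**}p$ by the special case, and invokes \cite[Lemma 5.3]{BHN} twice (the $C^*$-subalgebra generated by the image of an algebra inside a $C^*$-envelope of its second dual is a $C^*$-envelope of the algebra) to descend to $A$ and $B$; the ideal property then follows as in Lemma \ref{id}. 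You instead work entirely inside $C^*_{\rm e}(B)$: you take the Shilov boundary ideal $I$ of $A$ in the $C^*$-algebra $D$ it generates, observe that $I$ is automatically an ideal of $C^*_{\rm e}(B)$, and show that $I$ is then a boundary ideal for $B$ in $C^*_{\rm e}(B)$ by splitting $\Vert b \Vert$ along the support projection of $A$ into an ``$A$-part'' (preserved because $q_I$ is completely isometric on $A$, via weak*-continuity of $(q_I|_A)^{**}$) and a ``$B/A$-part'' (preserved by Lemma \ref{try2} applied both in $C^*_{\rm e}(B)$ and in $C^*_{\rm e}(B)/I$); the essential/minimality property of the $C^*$-envelope then forces $I=(0)$. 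This is precisely the strategy the paper itself uses later to prove Theorem \ref{minp}, and it has the merit of being self-contained relative to the tools of Section 2 together with the standard existence of the Shilov ideal for approximately unital algebras, avoiding the second-dual machinery and the BHN citation; it also yields the slightly stronger transfer statement that any boundary ideal for $A$ in $D$ is a boundary ideal for $B$ in $C^*_{\rm e}(B)$. The paper's route is shorter modulo the cited lemma and records the useful intermediate fact about ideals of the form $Bp$. I checked the delicate points of your argument --- the identity $q_I(b)p' = (q_I|_A)^{**}(bp)$, the identification of $\Vert q_I(b)(1-p')\Vert$ with the quotient norm, and the two applications of Lemma \ref{try2} --- and they all go through, including at the matrix levels.
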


\begin{proof}
 We can assume that $B$ is unital, since if not,
$C^*_{\rm e}(B)$ is the $C^*$-algebra generated by
$B$ in $C^*_{\rm e}(B^1)$ (see \cite[Section 4.3]{BLM}).
First suppose that  $A = Bp$ for a central projection
$p \in B$, then it is easy to see that $C^*_{\rm e}(B) p$ is a
$C^*$-envelope of $A$.  In the general case we go to the second
duals.  If $(D,j)$ is a $C^*$-envelope of $B^{**}$, then its
$C^*$-subalgebra $C$ generated by $j(B)$ is a $C^*$-envelope of $B$
by \cite[Lemma 5.3]{BHN}.  If $p$ is the support projection of $A$
in $B^{**}$, then by the first line of the proof, $D j(p)$ is a
$C^*$-envelope of $A^{**} \cong A^{\perp \perp} = B^{**} p$. Thus by
\cite[Lemma 5.3]{BHN} again, the $C^*$-subalgebra $J$ of $D j(p)$
generated by $j(A)$ is a  $C^*$-envelope of $A$.   Just as in the
proof of the last result, $J$ is clearly an ideal in $C$.
\end{proof}

{\bf Remark.}   Although we shall not use this, the following
interesting fact follows from the last result.  If $A, B$ are as in
that result, with $A, B$ both approximately unital, then $I(A)$ may
be viewed as a subalgebra of $I(B)$.  In fact there is a projection
$p \in I(B)$ with $I(A) = p I(B) p$.  To see this, apply the last
result, the fact that $I(C^*_{\rm e}(\cdot)) = I(\cdot)$, and
\cite[Theorem 6.5]{Ham2}.

\medskip

A two-sided ideal $A$ in $B$ is {\em essential} if the canonical map
$\sigma : B \to \M(A)$ is one-to-one.  We  say that the ideal is
{\em completely essential} if $\sigma$ is completely isometric.
Later we will characterize these properties in terms of the Busby
invariant. Here we give the following characterization along the
lines of \cite{KP}:

\begin{proposition}  \label{cesid}  If $A$ is a closed
approximately unital two-sided ideal in an operator algebra $B$,
then the following are equivalent:
\begin{itemize}  \item [{\rm (i)}]  $A$ is a completely essential
ideal in $B$.
\item [{\rm (ii)}]  Any complete contraction with domain $B$ is completely
isometric iff its restriction to $A$ is completely isometric.
\item [{\rm (iii)}]   There is a $C^*$-cover ${\mathcal E}$ of
$B$ such that the $C^*$-subalgebra $J$ of ${\mathcal E}$ generated
by $A$ is an essential ideal in ${\mathcal E}$.
\item [{\rm (iv)}]  Same as {\rm (iii)}, but with ${\mathcal E} =
C^*_{\rm e}(B)$.
\item [{\rm (v)}]  If $j : B \to I(B)$ is the canonical map
into the injective envelope of $B$, then $(I(B),j_{\vert A})$ is an
injective envelope of $A$. \end{itemize}
If $B$ is nonunital, these are equivalent to
\begin{itemize}
\item [{\rm (vi)}]   $A$ is a completely essential
ideal in the unitization $B^1$.  \end{itemize}
\end{proposition}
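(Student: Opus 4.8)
The plan is to organise everything around $(i)$: I will prove $(i)\Leftrightarrow(ii)$, $(i)\Leftrightarrow(v)$, $(i)\Rightarrow(iv)\Rightarrow(iii)\Rightarrow(i)$, and finally $(i)\Leftrightarrow(vi)$. Throughout I will use a few standard facts: that $\M(A)$ embeds completely isometrically into the injective envelope $I(A)$ in such a way that the canonical homomorphism $\sigma$ restricts to the identity on $A$ (see \cite[Sec.~2.6]{BLM}); Hamana's results that $A\hookrightarrow I(A)$ is an essential and rigid extension, that the injective envelope is unchanged under essential extensions, and that an injective essential extension of $A$ is an injective envelope of $A$; and, by Lemma \ref{cenvi}, that $C^*_e(A)$ may be identified with the $C^*$-subalgebra $J$ of $C^*_e(B)$ generated by $A$, $J$ then being an ideal of $C^*_e(B)$.

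For $(ii)\Rightarrow(i)$: the forward implication in $(ii)$ is automatic, since a restriction of a complete isometry is a complete isometry; and $\sigma:B\to\M(A)$ is a complete contraction whose restriction $\sigma|_A$ is the identity of $A$, hence completely isometric, so $(ii)$ forces $\sigma$ completely isometric. For $(i)\Rightarrow(ii)$ and $(i)\Rightarrow(v)$ together: if $\sigma$ is completely isometric it realises $B$ completely isometrically inside $I(A)$, with image containing $\sigma(A)=A$. Since $A\hookrightarrow I(A)$ is essential, so is the intermediate inclusion $A\hookrightarrow\sigma(B)$; transporting this along the isometric isomorphism $\sigma$ shows $A\hookrightarrow B$ is essential, which is $(ii)$. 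Moreover $\sigma(B)\hookrightarrow I(A)$ is essential (a complete contraction on $I(A)$ completely isometric on $\sigma(B)$ is completely isometric on $A$, hence on $I(A)$), and $I(A)$ is injective, so $(I(A),\cdot)$ is an injective envelope of $\sigma(B)\cong B$ compatibly with $A$; this is $(v)$. Conversely, if $(v)$ holds then $I(B)=I(A)$, and extending $\sigma:B\to\M(A)\subseteq I(A)=I(B)$ to a complete contraction $\hat\sigma:I(B)\to I(B)$ produces a self-map of $I(B)$ fixing $A$, so $\hat\sigma=\mathrm{id}$ by rigidity and $\sigma$ coincides with the completely isometric inclusion of $B$ into $I(B)$; thus $(i)$.

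For $(i)\Rightarrow(iv)$: by $(i)\Rightarrow(ii)$ we may use that $A\hookrightarrow B$ is essential. Let $q$ be the support projection of $J$ in $C^*_e(B)^{**}$; it is central there, $aq=qa=a$ for $a\in A$, and the $*$-homomorphism $\nu:C^*_e(B)\to\overline{C^*_e(B)\,q}$, $c\mapsto cq$, has kernel $\{c:cJ=Jc=0\}$, so $J$ is essential in $C^*_e(B)$ iff $\nu$ is injective. Since $\nu|_A=\mathrm{id}_A$ is completely isometric, essentiality of $A\hookrightarrow B$ makes $\nu|_B$ completely isometric, so $(\overline{C^*_e(B)\,q},\nu|_B)$ is a $C^*$-cover of $B$; the universal property of $C^*_e(B)$ gives a $*$-epimorphism $\mu$ onto $C^*_e(B)$ with $\mu\nu$ fixing $B$, whence $\mu\nu=\mathrm{id}$, $\nu$ is injective, and $(iv)$ holds. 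For $(iv)\Rightarrow(i)$: if $J=C^*_e(A)$ is an essential ideal in $C^*_e(B)$, then $C^*_e(B)\hookrightarrow\M(J)\subseteq I(J)=I(A)$ completely isometrically, so $B$ sits completely isometrically in $I(A)$; extending $\sigma$ to $I(A)$ and arguing by rigidity as above, $\sigma$ is the inclusion, hence completely isometric. Clearly $(iv)\Rightarrow(iii)$. For $(iii)\Rightarrow(i)$: choosing a faithful representation of $\mathcal E_0$ and using that $J_0:=C^*(A)$ is an essential ideal of $\mathcal E_0$, we obtain $A\hookrightarrow\mathcal E_0\subseteq B(H)$ completely isometrically with $A$ acting non-degenerately on $H$; then $\M(A)$ is completely isometrically $\{T\in B(H):TA+AT\subseteq A\}$, and since $A$ is an ideal of $B$ each $b\in B$ lies in this set and, as a multiplier of $A$, equals $\sigma(b)$, so $\sigma$ is completely isometric. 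Finally, for $(i)\Leftrightarrow(vi)$: $A$ is also an approximately unital two-sided ideal of $B^1$, the map $B^1\to\M(A)$ is the unital extension of $\sigma:B\to\M(A)$, and by Meyer's theorem (\cite{Mey}) such an extension is completely isometric precisely when the original map is.

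The step requiring genuine work, which I expect to be the main obstacle, is the claim invoked in $(iii)\Rightarrow(i)$ that $A$ acts non-degenerately on the representation space of the faithful representation of $\mathcal E_0$. This reduces to showing that a contractive approximate identity of $A$ is also one for the $C^*$-algebra $C^*(A)$ it generates — equivalently, that $\overline{A\,C^*(A)}=\overline{C^*(A)\,A}=C^*(A)$ — which in turn hinges on how the approximate identity of $A$ interacts with $A^*$ inside $C^*(A)$. This is the one place where the injective-envelope and $C^*$-envelope formalism does not immediately supply the conclusion; every other implication is a formal consequence of that formalism together with Lemma \ref{cenvi}.
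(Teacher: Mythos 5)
Your argument is correct, but it is organised and executed quite differently from the paper's. The paper proves the cycle (ii)$\Rightarrow$(i), (iv)$\Rightarrow$(iii)$\Rightarrow$(i)$\Rightarrow$(iv)$\Rightarrow$(v)$\Rightarrow$(ii): its (iii)$\Rightarrow$(i) and (i)$\Rightarrow$(iv) both rest on the norm formula $\Vert [\sigma(b_{ij})]\Vert = \sup_t \Vert [b_{ij}e_t]\Vert$ coming from a common cai for $A$ and the generated $C^*$-algebra (the proof of (2.23) in \cite{BLM}), and its (iv)$\Rightarrow$(v) is outsourced to the $C^*$-algebraic case in \cite{KP} together with $I(A)=I(C^*_{\rm e}(A))$. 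You instead prove (i)$\Leftrightarrow$(v) directly from Hamana's essentiality and rigidity of $I(A)$ (using $\M(A)\subset I(A)$), which makes the injective-envelope equivalence self-contained and avoids the appeal to \cite{KP}; you replace the paper's cai-norm computation in (i)$\Rightarrow$(iv) by a support-projection/universal-property argument for $C^*_{\rm e}(B)$; and you prove (iii)$\Rightarrow$(i) spatially via a faithful nondegenerate representation and the concrete picture $\M(A)\cong\{T\in B(H): TA+AT\subset A\}$. All of these steps check out. The one caveat: the ``main obstacle'' you flag --- that a cai for $A$ is a cai for $C^*(A)$, so that $A$ acts nondegenerately --- is not actually a gap; it is exactly \cite[2.1.6--2.1.7]{BLM}, which the paper itself invokes (it is the same fact underlying the paper's assertion that $J$ and $A$ have a common cai and its $\sup_t$ formula). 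So your proof is complete modulo a standard citation; what it buys is independence from the $C^*$-algebraic result of \cite{KP}, at the cost of somewhat longer arguments for (i)$\Rightarrow$(iv) and (iii)$\Rightarrow$(i) than the paper's one-line norm identity.
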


\begin{proof}  We begin by
showing that (i) is equivalent to (vi) if $B$ is not unital.
In this case, (vi) $\Rightarrow$ (i) is
trivial. Conversely, if $\sigma : B \to \M(A)$ is completely
isometric, then by Meyer's unitization theorem (see \cite[Corollary
2.1.15]{BLM}) it follows that the canonical map $B^1 \to \M(A)$ is
completely isometric, giving (vi).

(ii) $\Rightarrow$ (i) \ This follows from the fact that the
restriction of $\sigma : B \to \M(A)$ to $A$ is completely
isometric.  That (iv) $\Rightarrow$ (iii) is obvious.

(iii) $\Rightarrow$ (i) \  The canonical map ${\mathcal E} \to
\M(J)$ is a one-to-one $*$-homomorphism, and hence completely
isometric.  Thus the restriction $\rho$ to $B$ is completely
isometric. Now $J$ and $A$ have a common cai $(e_t)$. By the proof
of (2.23) in \cite{BLM}, we have
$$\Vert [b_{ij}] \Vert = \Vert [\rho(b_{ij})] \Vert = \sup_t \, \Vert [ b_{ij} e_t ] \Vert
= \Vert [\sigma(b_{ij})] \Vert , \qquad [b_{ij}] \in M_n(B).$$

(i) $\Rightarrow$ (iv) \  We are supposing
$\sigma : B \to \M(A)$ is completely
isometric.  View
$C^*_{\rm e}(A) \subset C^*_{\rm e}(B)$ as in
  Lemma \ref{cenvi},  and consider the
canonical $*$-homomorphism $\sigma' : C^*_{\rm e}(B) \to \M(C^*_{\rm e}(A))$.
Since $C^*_{\rm e}(A)$ and $A$ have a common cai $(e_t)$,
the last centered equation in the last paragraph, shows in the current
setting that the restriction $\rho$ of $\sigma'$ to $B$ is completely
isometric.  By
the `essential property' of the $C^*$-envelope (see e.g.\ 4.3.6 in
\cite{BLM}), $\sigma'$  is completely
isometric.

(v) $\Rightarrow$ (ii) \ Given a complete contraction $T : B \to
B(H)$ whose restriction to $A$ is completely isometric, extend $T$
to a complete contraction $\hat{T} : I(B) = I(A) \to B(H)$.
 By the `essential property' of $I(A)$ (see e.g.\ \cite[Section 4.2]{BLM}),
$\hat{T}$ is completely
isometric, and hence also $T$.

(iv) $\Rightarrow$ (v) \  
We may assume that $B$ is approximately unital, since in the 
contrary case one may appeal to
(vi), and also use the fact that $C^*_{\rm e}(B)$ is the
$C^*$-algebra generated by $B$ in $C^*_{\rm e}(B^1)$ (see
\cite[Section 4.3]{BLM}).
Then this follows from  the $C^*$-algebraic case of
(i) $\Rightarrow$ (v) from \cite{KP}, together with the fact that
$I(A) = I(C^*_{\rm e}(A))$ (and similarly for $B$).
\end{proof}

{\bf Remark.}  The proof shows that the conditions are also
equivalent to $B$ being  $A$-$A$-essential (resp.\
$A$-$\Cdb$-essential)  in the sense of \cite{KP}.

\begin{lemma}  \label{max}  If $A$ is approximately unital, and
is an ideal in an operator algebra $B$,
define ${\mathcal D}$
to be the $C^*$-subalgebra of  $C^*_{\rm max}(B)$ generated by $A$.  Then  ${\mathcal D}$ (resp.\ $C^*_{\rm
max}(B)/{\mathcal D}$) is a maximal $C^*$-cover of $A$ (resp.\ of
$B/A$).  That is, $C^*_{\rm max}(A) = {\mathcal D}$ and $C^*_{\rm
max}(B/A) = C^*_{\rm max}(B)/{\mathcal D}$.
\end{lemma}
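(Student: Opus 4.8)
The plan is to treat the two assertions separately. In each case I would exhibit the candidate $C^*$-algebra as a $C^*$-cover of the relevant operator algebra, and then verify the universal property that characterizes $C^*_{\rm max}(\cdot)$; since an object with that universal property is unique up to the obvious isomorphism, this identifies it.

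For $C^*_{\rm max}(A) = {\mathcal D}$: since $A \subset B \subset C^*_{\rm max}(B)$ completely isometrically and ${\mathcal D}$ is generated by $A$ by definition, $({\mathcal D},\text{incl})$ is a $C^*$-cover of $A$; in particular a cai $(e_t)$ for $A$ is a cai for ${\mathcal D}$ (a cai for $A$ being a cai for any $C^*$-cover of $A$, as noted in the introduction). Applying the universal property of $C^*_{\rm max}(A)$ to this inclusion produces a $*$-homomorphism $\alpha : C^*_{\rm max}(A) \to {\mathcal D}$ which restricts to the identity on $A$, and which is onto since $A$ generates ${\mathcal D}$. For the reverse map I would pass through multiplier algebras. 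The inclusion $j : A \hookrightarrow C^*_{\rm max}(A)$ is proper and completely isometric, so it extends to a unital completely isometric morphism $\bar{\jmath} : \M(A) \to \M(C^*_{\rm max}(A))$; composing with the canonical morphism $\sigma : B \to \M(A)$ gives a completely contractive homomorphism of $B$ into the $C^*$-algebra $\M(C^*_{\rm max}(A))$, which by the universal property of $C^*_{\rm max}(B)$ extends to a $*$-homomorphism $\Phi : C^*_{\rm max}(B) \to \M(C^*_{\rm max}(A))$. On $A$ the map $\Phi$ agrees with $j$, since $\sigma$ restricts on $A$ to the canonical embedding $A \hookrightarrow \M(A)$; hence $\Phi$ carries ${\mathcal D}$, being generated by $A$, onto the $C^*$-subalgebra generated by $j(A)$, which is $C^*_{\rm max}(A)$ sitting inside $\M(C^*_{\rm max}(A))$ as an ideal. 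Thus $\Phi_{|{\mathcal D}} : {\mathcal D} \to C^*_{\rm max}(A)$ is a surjective $*$-homomorphism which is the identity on $A$; the composites $\Phi_{|{\mathcal D}} \circ \alpha$ and $\alpha \circ \Phi_{|{\mathcal D}}$ are $*$-homomorphisms agreeing with the identity map on a set generating these $C^*$-algebras, hence are the identity, and ${\mathcal D} = C^*_{\rm max}(A)$.

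For $C^*_{\rm max}(B/A) = C^*_{\rm max}(B)/{\mathcal D}$: by Lemma \ref{id} applied inside ${\mathcal E} = C^*_{\rm max}(B)$, ${\mathcal D}$ is a two-sided ideal in $C^*_{\rm max}(B)$; since $A$ and ${\mathcal D}$ share the cai $(e_t)$, Lemma \ref{try2} gives a completely isometric embedding $B/A \hookrightarrow C^*_{\rm max}(B)/{\mathcal D}$, and as $B$ generates $C^*_{\rm max}(B)$ the image of $B/A$ generates the quotient $C^*$-algebra, so $C^*_{\rm max}(B)/{\mathcal D}$ is a $C^*$-cover of $B/A$. To check its universal property, let $\pi : B/A \to {\mathcal D}'$ be a completely contractive homomorphism into a $C^*$-algebra; composing with the quotient map $q : B \to B/A$ and using the universal property of $C^*_{\rm max}(B)$ gives a $*$-homomorphism $\Psi : C^*_{\rm max}(B) \to {\mathcal D}'$ extending $\pi \circ q$. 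Since $\Psi$ annihilates $A$ and is a $*$-homomorphism, its kernel contains the $C^*$-algebra generated by $A$, namely ${\mathcal D}$, so $\Psi$ factors through $C^*_{\rm max}(B)/{\mathcal D}$, yielding the required $*$-homomorphism extending $\pi$. Hence $C^*_{\rm max}(B)/{\mathcal D}$ has the defining universal property of $C^*_{\rm max}(B/A)$, and the two coincide.

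I expect the main obstacle to be the construction of the reverse map ${\mathcal D} \to C^*_{\rm max}(A)$ in the first part: there is no completely contractive homomorphism of $B$ directly into $C^*_{\rm max}(A)$ extending $\mathrm{id}_A$, so one is forced to route through $\M(C^*_{\rm max}(A))$, and then the point requiring care is to see that $\Phi$ sends ${\mathcal D}$ back into $C^*_{\rm max}(A)$ rather than merely into its multiplier algebra. This is exactly where the ideal hypothesis on $A \subset B$ (which via Lemma \ref{id} makes ${\mathcal D}$ an ideal, and which makes $C^*_{\rm max}(A)$ an ideal in $\M(C^*_{\rm max}(A))$) and the fact that $A$ generates ${\mathcal D}$ come into play. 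The remaining steps are routine manipulations with universal properties together with the already-proved Lemmas \ref{id} and \ref{try2}.
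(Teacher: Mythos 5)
Your proof is correct, and it splits naturally into two halves that compare differently with the paper's argument. The second half (identifying $C^*_{\rm max}(B/A)$ with $C^*_{\rm max}(B)/{\mathcal D}$) is essentially the paper's proof: embed $B/A$ completely isometrically in the quotient via Lemma \ref{try2}, observe that it generates, then push an arbitrary completely contractive homomorphism on $B/A$ back to $B$, out to a $*$-homomorphism on $C^*_{\rm max}(B)$ killing $A$ (hence killing ${\mathcal D}$), and down through the quotient. For the first half you take a genuinely different route. The paper verifies the universal property of ${\mathcal D}$ directly: a nondegenerate completely contractive $\pi : A \to B(H)$ extends to $B^1$ by 2.6.13 of \cite{BLM} (the representation-extension theorem for approximately unital ideals), hence to a $*$-homomorphism on $C^*_{\rm max}(B^1)$, whose restriction to ${\mathcal D}$ is a $*$-homomorphism extending $\pi$. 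You instead construct an explicit pair of mutually inverse $*$-homomorphisms between ${\mathcal D}$ and $C^*_{\rm max}(A)$ fixing $A$, routing the nontrivial direction through $\sigma : B \to \M(A)$ followed by $\bar{\jmath} : \M(A) \to \M(C^*_{\rm max}(A))$, and using that a $*$-homomorphism carries a generated $C^*$-subalgebra onto the $C^*$-subalgebra generated by the image. Both arguments hinge on the same mechanism --- extending maps defined on the approximately unital ideal $A$ to $B$ via multipliers --- but the paper applies it to every representation of $A$ at once, whereas you apply it only to the single universal representation $A \hookrightarrow C^*_{\rm max}(A)$. Your version trades the citation of 2.6.13 of \cite{BLM} for the functoriality of $\M(\cdot)$ under proper completely isometric morphisms (recorded in the introduction), and sidesteps any fuss about nondegeneracy; the paper's version is shorter. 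Your closing diagnosis of where the ideal hypothesis enters is accurate.
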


\begin{proof}  We first prove that ${\mathcal D}$ above has
the universal property characterizing $C^*_{\rm max}(A)$. If $\pi :
A \to B(H)$ is a nondegenerate completely contractive homomorphism,
then by 2.6.13 of \cite{BLM}, $\pi$ extends to a completely
contractive homomorphism $B^1 \to B(H)$, and hence to a
$*$-homomorphism from $C^*_{\rm max}(B^1) \to B(H)$. The restriction
of the latter to ${\mathcal D}$ is a nondegenerate $*$-homomorphism
extending $\pi$. Thus ${\mathcal D} = C^*_{\rm max}(A)$.  By Lemma
\ref{try2}, $B/A \subset C^*_{\rm max}(B)/{\mathcal D}$ completely
isometrically, and it is easy to see that $C = B/A$ generates the latter
$C^*$-algebra, and so this $C^*$-algebra is a $C^*$-cover of $C$.
The fact that it is the maximal one follows by showing that it has
the universal property characterizing $C^*_{\rm max}(C)$: a
nondegenerate completely contractive homomorphism $\pi : C \to B(H)$
induces a homomorphism $B \to B(H)$ which annihilates $A$, which in
turn extends to a $*$-homomorphism $C^*_{\rm max}(B) \to B(H)$ which
annihilates ${\mathcal D}$.  This induces a $*$-homomorphism on
$C^*_{\rm max}(B)/{\mathcal D}$, and it is easy to check that this `extends' $\pi$.
\end{proof}

The following results, which are needed in the sequel paper,
 use the language of operator algebra tensor
products (see e.g.\ \cite[Section 6.1]{BLM}):

\begin{lemma} \label{maxp}  If $B$ is any $C^*$-algebra,
and if $A$ is any approximately unital operator algebra, then
$C^*_{\rm max}(B \otimes_{\rm max} A) = B \otimes_{\rm max} C^*_{\rm
max}(A)$.  If $B$ is in addition a nuclear $C^*$-algebra,
then $C^*_{\rm max}(B \otimes_{\rm min} A) = B
\otimes_{\rm min} C^*_{\rm max}(A)$.
\end{lemma}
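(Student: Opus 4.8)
The plan is to verify the defining universal property of $C^*_{\rm max}(\cdot)$ directly, leaning on the analogous universal property of the maximal operator algebra tensor product (see e.g.\ \cite[Section 6.1]{BLM}): for operator algebras $E_1, E_2$ and a $C^*$-algebra $D$, the completely contractive homomorphisms $E_1 \otimes_{\rm max} E_2 \to D$ are precisely the maps $x \otimes y \mapsto u(x)v(y)$ induced by pairs of completely contractive homomorphisms $u : E_1 \to D$, $v : E_2 \to D$ with commuting ranges. I would also use that $\otimes_{\rm max}$ is functorial for completely contractive homomorphisms, and that the operator algebra maximal (resp.\ minimal) tensor product of two $C^*$-algebras is just their $C^*$-algebraic maximal (resp.\ minimal) tensor product, so that in particular $B \otimes_{\rm max} C^*_{\rm max}(A)$ and $B \otimes_{\rm min} C^*_{\rm max}(A)$ are $C^*$-algebras. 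Throughout, $j_A : A \to C^*_{\rm max}(A)$ denotes the canonical completely isometric inclusion.

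For the max case, I would set $\kappa = {\rm id}_B \otimes_{\rm max} j_A : B \otimes_{\rm max} A \to B \otimes_{\rm max} C^*_{\rm max}(A)$, a completely contractive homomorphism; since $j_A(A)$ generates $C^*_{\rm max}(A)$ (and $A, B$ have cais), a routine argument with approximate identities shows $\kappa(B \otimes_{\rm max} A)$ generates $B \otimes_{\rm max} C^*_{\rm max}(A)$ as a $C^*$-algebra. Then, given any completely contractive homomorphism $\phi : B \otimes_{\rm max} A \to D$ into a $C^*$-algebra, I would write $\phi$ as the map induced by a commuting pair of completely contractive homomorphisms $\phi_B : B \to D$, $\phi_A : A \to D$. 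Because $B$ is a $C^*$-algebra, $\phi_B$ is automatically a $*$-homomorphism; and $\phi_A$ extends to a $*$-homomorphism $\widehat{\phi_A} : C^*_{\rm max}(A) \to D$ whose range, being the $C^*$-algebra generated by $\phi_A(A)$, still commutes with $\phi_B(B)$. Thus the commuting pair $(\phi_B, \widehat{\phi_A})$ induces a $*$-homomorphism $\widehat{\phi} : B \otimes_{\rm max} C^*_{\rm max}(A) \to D$ with $\widehat{\phi} \circ \kappa = \phi$, and this $\widehat{\phi}$ is unique since $\kappa(B \otimes_{\rm max} A)$ generates the target. This is precisely the universal property characterizing $C^*_{\rm max}(B \otimes_{\rm max} A)$, with $\kappa$ as the canonical (hence completely isometric) embedding, giving $C^*_{\rm max}(B \otimes_{\rm max} A) = B \otimes_{\rm max} C^*_{\rm max}(A)$.

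For the nuclear case, I would put $Y = B \otimes_{\rm min} C^*_{\rm max}(A)$; by nuclearity of $B$ applied to the $C^*$-algebra $C^*_{\rm max}(A)$ one has $Y = B \otimes_{\rm max} C^*_{\rm max}(A)$, which by the first part \emph{is} $C^*_{\rm max}(B \otimes_{\rm max} A)$. It then remains to check that $Y$, with the canonical completely contractive homomorphism $\kappa' = {\rm id}_B \otimes_{\rm min} j_A : B \otimes_{\rm min} A \to Y$ (whose image again generates $Y$), has the universal property of $C^*_{\rm max}(B \otimes_{\rm min} A)$. For this, let $q : B \otimes_{\rm max} A \to B \otimes_{\rm min} A$ be the canonical completely contractive (quotient) homomorphism, which is the identity on the algebraic tensor product. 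Given a completely contractive homomorphism $\phi : B \otimes_{\rm min} A \to D$ into a $C^*$-algebra, the composite $\phi \circ q$ extends, by the first part, to a $*$-homomorphism $\widehat{\phi \circ q} : B \otimes_{\rm max} C^*_{\rm max}(A) = Y \to D$; since $\kappa'(x \otimes y) = \kappa(x \otimes y)$ and $q(x \otimes y) = x \otimes y$ for $x \in B, y \in A$, we get $\widehat{\phi \circ q} \circ \kappa' = \phi$ on elementary tensors, hence everywhere, and uniqueness is again clear from generation. So $Y = C^*_{\rm max}(B \otimes_{\rm min} A)$, the second assertion.

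The whole argument is a chase through universal properties once the ingredients are assembled, so the steps I expect to need the most care are the precise statement of the universal property and functoriality of the maximal operator algebra tensor product and its compatibility with the $C^*$-algebraic tensor products on $C^*$-algebra factors; in particular I would want to confirm that, for the nuclear case, only nuclearity of $B$ as a $C^*$-algebra applied to the single algebra $C^*_{\rm max}(A)$ is needed, with no appeal to any ``operator-algebra nuclearity''. As an alternative to the automatic-isometry remark in the first part, one could instead check directly that the two maximal tensor norms induced on $M_n(B \otimes A)$ coincide, since every commuting pair $(\pi, \rho)$ of completely contractive homomorphisms of $B$ and $A$ yields the commuting pair $(\pi, \widetilde{\rho})$ with $\widetilde{\rho}$ the $*$-extension of $\rho$ to $C^*_{\rm max}(A)$, and conversely by restriction.
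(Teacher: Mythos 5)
Your argument for the first assertion is essentially the paper's own: decompose a completely contractive homomorphism on $B \otimes_{\rm max} A$ into a commuting pair $(\pi,\rho)$, note $\pi$ is automatically a $*$-homomorphism since $B$ is a $C^*$-algebra, extend $\rho$ to $C^*_{\rm max}(A)$, and recombine to verify the universal property of $C^*_{\rm max}$. The paper leaves the nuclear case unargued, and your reduction of it to the first part via nuclearity of $B$ and the quotient map $B \otimes_{\rm max} A \to B \otimes_{\rm min} A$ is correct and is the natural completion of the intended proof.
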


\begin{proof}
By (6.3) in \cite{BLM}, we have $B \otimes_{\rm max} A  \subset B
\otimes_{\rm max} C^*_{\rm max}(A)$.  Clearly $B \otimes A$
generates the latter $C^*$-algebra.
 We show that $B \otimes_{\rm max} C^*_{\rm max}(A)$ has the
universal property of $C^*_{\rm max}(B \otimes_{\rm max} A)$. Let
$\theta : B \otimes_{\rm max} A \to B(H)$ be a completely
contractive homomorphism.  By \cite[Corollary 6.1.7]{BLM}, there are
two completely contractive homomorphisms $\pi : B \to B(H)$ and
$\rho : A \to B(H)$ with commuting ranges such that $\theta(b
\otimes a) = \pi(b) \rho(a)$.   Now $\pi$ is forced to be a
$*$-homomorphism by \cite[Proposition 1.2.4]{BLM}, and hence the
range of the canonical extension $\tilde{\rho}$ of $\rho$ to
$C^*_{\rm max}(A)$ commutes with $\pi(B)$.  Hence we obtain a
$*$-homomorphism $\tilde{\theta} : B \otimes_{\rm max} C^*_{\rm
max}(A) \to B(H)$ with
$$\tilde{\theta}(b \otimes a) = \pi(b) \tilde{\rho}(a) =  \pi(b)
\rho(a) = \theta(b \otimes a) , \qquad a \in A, b \in B ,$$  proving
the result.
 \end{proof}

\begin{lemma} \label{minpre}  If $A, B$ are
 approximately unital operator algebras then $B \otimes_{\rm min} A$
 is a completely  essential ideal in $B^1 \otimes_{\rm min} A^1$.
 Here $A^1$ is the unitization, set equal to $A$ if $A$ is already
 unital, and similarly for $B^1$.
\end{lemma}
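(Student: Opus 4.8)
The plan is to verify that $B \otimes_{\rm min} A$ is a (two-sided, closed, approximately unital) ideal in $B^1 \otimes_{\rm min} A^1$, and then to invoke one of the equivalent conditions for complete essentiality from Proposition \ref{cesid}. First I would check the ideal structure: $B$ is an ideal in $B^1$ and $A$ is an ideal in $A^1$, so at the algebraic level $B \otimes A$ is a two-sided ideal in $B^1 \otimes A^1$. Since the minimal tensor norm is a cross norm and the inclusion $B \otimes_{\rm min} A \subset B^1 \otimes_{\rm min} A^1$ is completely isometric (by \cite[Section 6.1]{BLM}), the closure $B \otimes_{\rm min} A$ is a closed two-sided ideal in $B^1 \otimes_{\rm min} A^1$. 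If $(e_s)$ is a cai for $A$ and $(f_t)$ a cai for $B$, then $(f_t \otimes e_s)$ is a cai for $B \otimes_{\rm min} A$ (using that elementary tensors are dense and the cross-norm property bounds the net), so the ideal is approximately unital.

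For complete essentiality I would use condition (ii) of Proposition \ref{cesid}: a complete contraction on $B^1 \otimes_{\rm min} A^1$ is completely isometric iff its restriction to $B \otimes_{\rm min} A$ is. So let $\theta : B^1 \otimes_{\rm min} A^1 \to B(H)$ be a complete contraction whose restriction to $B \otimes_{\rm min} A$ is completely isometric; I must show $\theta$ is completely isometric. The natural approach is to pass to $C^*$-covers: embed $B^1 \otimes_{\rm min} A^1$ completely isometrically into $C^*_{\rm e}(B^1) \otimes_{\rm min} C^*_{\rm e}(A^1)$ (using \cite[(6.3)]{BLM} and nuclearity considerations, or more directly the fact that the $C^*$-envelope of a minimal tensor product of approximately unital operator algebras embeds in the min tensor product of the $C^*$-envelopes). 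Inside this $C^*$-algebra, the $C^*$-subalgebra generated by $B \otimes_{\rm min} A$ is $C^*_{\rm e}(B) \otimes_{\rm min} C^*_{\rm e}(A)$ (using Lemma \ref{cenvi} twice, plus the behaviour of generated $C^*$-algebras under min tensor products), and this is an essential ideal in $C^*_{\rm e}(B^1) \otimes_{\rm min} C^*_{\rm e}(A^1)$ because an ideal $I \otimes_{\rm min} J$ is essential in $\mathcal{C} \otimes_{\rm min} \mathcal{D}$ whenever $I$ is essential in $\mathcal{C}$ and $J$ in $\mathcal{D}$. This realizes condition (iii) of Proposition \ref{cesid}, which gives complete essentiality directly — so in fact I would argue via (iii) rather than (ii).

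Thus the cleanest route is: (1) confirm $B \otimes_{\rm min} A$ is a closed approximately unital two-sided ideal in $B^1 \otimes_{\rm min} A^1$; (2) exhibit $C^*_{\rm e}(B^1) \otimes_{\rm min} C^*_{\rm e}(A^1)$ as a $C^*$-cover of $B^1 \otimes_{\rm min} A^1$; (3) identify the $C^*$-subalgebra generated by $B \otimes_{\rm min} A$ inside it as $C^*_{\rm e}(B) \otimes_{\rm min} C^*_{\rm e}(A)$; (4) observe this is an essential ideal, so condition (iii) of Proposition \ref{cesid} holds, giving complete essentiality.

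The main obstacle I expect is step (2)–(3): one needs that the minimal tensor product of $C^*$-covers of $B^1$ and $A^1$ really is a $C^*$-cover of $B^1 \otimes_{\rm min} A^1$ (i.e. the min tensor norm on operator algebras agrees with the restriction of the min $C^*$-norm), and that the generated $C^*$-algebra of $B \otimes_{\rm min} A$ is the min tensor product of the separately generated $C^*$-algebras — both of which rely on the compatibility of the spatial/minimal tensor norm with taking generated $C^*$-algebras. One should also be careful that essentiality of $I \otimes_{\rm min} J$ in $\mathcal{C} \otimes_{\rm min} \mathcal{D}$ holds with no nuclearity hypothesis; this is standard for the minimal tensor product via the slice-map or faithful-representation argument, but it is worth a sentence. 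If there is any friction here, the fallback is to work with $C^*_{\rm max}$ instead (using Lemmas \ref{max} and \ref{maxp}), where the universal-property arguments are more transparent, and then transfer along the quotient maps to $C^*_{\rm e}$.
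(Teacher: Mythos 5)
Your proposal is correct in outline but follows a genuinely different, and considerably longer, route than the paper. The paper's proof is a direct two-line estimate: represent $A$ and $B$ nondegenerately on $K$ and $H$, view $B^1\otimes_{\rm min}A^1\subset B(H\otimes K)$, and for $u$ in that algebra note $\Vert\sigma(u)\Vert\geq\Vert u(f_s\otimes e_s)\Vert\geq\Vert u(f_s\otimes e_s)\zeta\Vert\to\Vert u\zeta\Vert$ for unit vectors $\zeta$, whence $\Vert\sigma(u)\Vert\geq\Vert u\Vert$ (and likewise at matrix levels). You instead verify the ideal/cai structure and then route through condition (iii) of Proposition \ref{cesid} via the $C^*$-cover $C^*_{\rm e}(B^1)\otimes_{\rm min}C^*_{\rm e}(A^1)$. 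That works, and it is worth observing that the analytic kernel of both arguments is identical --- nondegeneracy of the actions on the Hilbert space tensor product is exactly what makes your ``$I\otimes_{\rm min}J$ is essential'' step go through; your version simply wraps it in the machinery of Proposition \ref{cesid}. Two cautions about your write-up. First, your step (3) is the delicate point you correctly flag: identifying the $C^*$-subalgebra generated by $B\otimes_{\rm min}A$ as $\mathcal{C}_B\otimes_{\rm min}\mathcal{C}_A$ requires a cai-padding argument to match adjoint patterns in words (since $(b\otimes a)^*=b^*\otimes a^*$ forces the $B$-side and $A$-side patterns to agree). But this identification is actually avoidable: by Lemma \ref{id} the generated subalgebra $J$ is an ideal in $\mathcal{E}$, it contains $B\otimes A$, and hence acts nondegenerately in a suitable faithful representation of $\mathcal{E}$ on $H_1\otimes H_2$; that already gives $uJ=0\Rightarrow u=0$. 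Second, make sure you do not invoke Theorem \ref{minp} (that $C^*_{\rm e}(B\otimes_{\rm min}A)=C^*_{\rm e}(B)\otimes_{\rm min}C^*_{\rm e}(A)$) anywhere, since the paper proves that theorem \emph{using} the present lemma; as written you only need a $C^*$-cover, not the envelope, so you are safe, but the phrasing of step (3) skirts close to circularity. On balance your argument buys a structural perspective (essentiality detected in a $C^*$-cover), while the paper's buys brevity and avoids any tensor-product bookkeeping.
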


\begin{proof}
Let $\sigma : B^1 \otimes_{\rm min} A^1 \to \M(B \otimes_{\rm min}
A)$ be the canonical morphism.  Assume $A$ and $B$ are
nondegenerately represented on Hilbert spaces $K$ and $H$
respectively.  Then $B^1 \otimes_{\rm min} A^1$ may be regarded as a
unital subalgebra of $B(H \otimes K)$.  For $u \in B^1 \otimes_{\rm
min} A^1$ and $\zeta \in {\rm Ball}(H \otimes K)$, we have $$\Vert
\sigma(u) \Vert \geq \Vert u (f_s \otimes e_s) \Vert \geq \Vert u
(f_s \otimes e_s) \zeta \Vert .$$ Taking a limit gives $\Vert
\sigma(u) \Vert \geq \Vert u  \zeta \Vert$, so that $\Vert \sigma(u)
\Vert \geq \Vert u \Vert$. So $\sigma$ is an isometry, and similarly
it is a complete isometry.
\end{proof}

\begin{theorem} \label{minp}  If  $A$ and $B$ are two operator systems,  or
two approximately unital operator
algebra, then $C^*_{\rm e}(B \otimes_{\rm min} A) = C^*_{\rm e}(B)
 \otimes_{\rm
min} C^*_{\rm e}(A)$.
\end{theorem}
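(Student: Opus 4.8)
\medskip
\noindent\emph{Proof proposal.} The plan is to show that $\mathcal{E} := C^*_e(B) \otimes_{\rm min} C^*_e(A)$ is a $C^*$-cover of $C := B \otimes_{\rm min} A$, and then that it is the \emph{minimal} such cover, i.e.\ the $C^*$-envelope. Operator systems are already unital, so I would first reduce the operator-algebra case to the unital case: if $A,B$ are approximately unital operator algebras, then $B \otimes_{\rm min} A$ is an approximately unital closed two-sided ideal of $B^1 \otimes_{\rm min} A^1$, which is completely essential there by Lemma~\ref{minpre}, so by Lemma~\ref{cenvi} the algebra $C^*_e(C)$ is the $C^*$-subalgebra of $C^*_e(B^1 \otimes_{\rm min} A^1)$ generated by $B \otimes A$; a routine argument with approximate identities --- using that $C^*_e(B)$ is the $C^*$-subalgebra of $C^*_e(B^1)$ generated by $B$, that a cai for $B$ is one for $C^*_e(B)$, and that one may slide $e_t$ and $e_t^*$ past the tensor sign --- then identifies the $C^*$-subalgebra of $C^*_e(B^1) \otimes_{\rm min} C^*_e(A^1)$ generated by $B \otimes A$ with $C^*_e(B) \otimes_{\rm min} C^*_e(A)$, so the general case follows from the unital one.

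So assume $A,B$ unital and put $\mathcal{X} = C^*_e(B)$, $\mathcal{Y} = C^*_e(A)$, unital $C^*$-algebras into which $B$ and $A$ embed unitally and completely isometrically. Since $\otimes_{\rm min}$ is functorial for complete isometries, $C \hookrightarrow \mathcal{E} = \mathcal{X} \otimes_{\rm min} \mathcal{Y}$ completely isometrically, and since $1 \in A$ and $1 \in B$ the subalgebra $B \otimes A$ generates $\mathcal{E}$ as a $C^*$-algebra; thus $\mathcal{E}$ is a $C^*$-cover of $C$ and there is a surjective $*$-homomorphism $\theta : \mathcal{E} \to C^*_e(C)$ restricting to the identity on $C$. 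Everything then reduces to proving $\theta$ faithful, equivalently that $\mathcal{E}$ has no nonzero \emph{boundary ideal} relative to $C$ (a closed ideal $J$ with $\mathcal{E} \to \mathcal{E}/J$ completely isometric on $C$).

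For this I would show that \emph{boundary representations tensor up}. Let $\pi_1$ be a boundary representation of $\mathcal{X} = C^*_e(B)$ for $B$ (an irreducible representation, on $K_1$ say, whose restriction to $B$ has the unique extension property) and $\pi_2$ one of $\mathcal{Y} = C^*_e(A)$ for $A$, on $K_2$. Then $\pi_1 \otimes \pi_2$ is irreducible (its commutant is $(\pi_1(\mathcal{X}) \otimes 1)' \cap (1 \otimes \pi_2(\mathcal{Y}))' = (1 \otimes \mathcal{B}(K_2)) \cap (\mathcal{B}(K_1) \otimes 1) = \mathbb{C}1$), and I claim $(\pi_1 \otimes \pi_2)|_C$ has the unique extension property. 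Indeed, if $\Psi$ is a u.c.p.\ map on $\mathcal{E}$ agreeing with $\pi_1 \otimes \pi_2$ on $C$, then for each state $\omega$ on $\mathcal{B}(K_2)$ the map $(\mathrm{id} \otimes \omega) \circ \Psi(\,\cdot\, \otimes 1)$ is a u.c.p.\ extension of $\pi_1|_B$ to $\mathcal{X}$, hence equals $\pi_1$; as such slice maps separate the points of $\mathcal{B}(K_1 \otimes K_2)$ this forces $\Psi(x \otimes 1) = \pi_1(x) \otimes 1$ for all $x \in \mathcal{X}$, and symmetrically $\Psi(1 \otimes y) = 1 \otimes \pi_2(y)$. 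Since these restrictions are $*$-homomorphisms, $\mathcal{X} \otimes 1$ and $1 \otimes \mathcal{Y}$ lie in the multiplicative domain of $\Psi$, so $\Psi(x \otimes y) = (\pi_1(x) \otimes 1)(1 \otimes \pi_2(y)) = (\pi_1 \otimes \pi_2)(x \otimes y)$ and hence $\Psi = \pi_1 \otimes \pi_2$. Now, because $\mathcal{X} = C^*_e(B)$ and $\mathcal{Y} = C^*_e(A)$, their boundary representations (for $B$, resp.\ $A$) have zero common kernel, and the minimal tensor product of faithful representations is faithful; hence the family $\{\pi_1 \otimes \pi_2\}$ has zero common kernel, the largest boundary ideal of $\mathcal{E}$ relative to $C$ is $(0)$, and $\mathcal{E} = C^*_e(C)$. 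The operator-system case runs identically, with ``completely isometric'' replaced by ``u.c.p.''\ throughout.

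The step I expect to be the real obstacle is the passage from ``$C^*$-cover'' to ``minimal $C^*$-cover'': the argument above reduces this to the existence of sufficiently many boundary representations of a $C^*$-envelope to separate its points (Arveson's theorem, in full generality due to Davidson and Kennedy), which is deep. If one wishes to avoid that input, one must show directly that any boundary ideal $J$ of $\mathcal{E}$ relative to $C$ is zero, and the natural route --- pushing $J$ through the slice maps $\mathrm{id} \otimes \psi$ into boundary ideals of $\mathcal{X}$ relative to $B$, which vanish since $\mathcal{X} = C^*_e(B)$ --- hinges on the delicate point that these slice maps transport the ``completely isometric on $B \otimes_{\rm min} A$'' condition down to ``completely isometric on $B$''.
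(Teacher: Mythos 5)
Your proposal is correct, and its core mechanism --- that the unique extension property passes to minimal tensor products via slice maps and multiplicative domains --- is exactly the engine of the paper's proof. But the two arguments feed that engine different inputs, and differ in the nonunital reduction. In the unital case the paper does not use irreducible boundary representations at all: it invokes the Dritschel--McCullough theorem \cite{DM} to realize $A \subset C^*_{\rm e}(A) \subset B(H)$ and $B \subset C^*_{\rm e}(B) \subset B(K)$ via a \emph{single} completely isometric representation with the unique extension property, notes that ampliations of such maps again have the property, and then runs the slice-map/multiplicative-domain argument (via \cite[1.3.12]{BLM}) once, to conclude that the canonical surjection $C^*_{\rm e}(B)\otimes_{\rm min} C^*_{\rm e}(A) \to C^*_{\rm e}(B\otimes_{\rm min}A)$ is literally the identity map on $B(K\otimes H)$. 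Your version instead tensors \emph{irreducible} boundary representations and needs them to exist in sufficient supply to separate the points of $C^*_{\rm e}(B)$ and $C^*_{\rm e}(A)$; as you rightly flag, that is the Arveson--Davidson--Kennedy theorem, which is much deeper than Dritschel--McCullough and was not available when this paper was written (Arveson's separable case and the Davidson--Kennedy general case came later). So your route is valid mathematics today but buys the same conclusion at a higher price; the paper's choice of a single maximal representation is precisely what lets it avoid the "real obstacle" you identify. For the reduction to the unital case the paper also proceeds differently: it takes a boundary ideal $J$ of $B\otimes_{\rm min}A$ in $C^*_{\rm e}(B)\otimes_{\rm min}C^*_{\rm e}(A)$, observes that $J$ is an ideal in the unitized tensor product, and uses Lemma \ref{minpre} together with Proposition \ref{cesid} to upgrade "completely isometric on the ideal" to "completely isometric on $B^1\otimes_{\rm min}A^1$", forcing $J=(0)$ by the unital case. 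Your reduction via Lemma \ref{cenvi} and the approximate-identity identification of the generated $C^*$-subalgebra is a legitimate alternative (and in fact does not even need the complete essentiality from Lemma \ref{minpre}), though the "routine argument" sliding $e_t$ past the tensor sign does deserve to be written out, along the lines of the proof of Lemma \ref{id}.
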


\begin{proof}  First assume that $A, B$ are unital.  In this
case, one may assume below that $A, B$ are operator systems if one
likes, by replacing $A$ by $A + A^*$, and similarly for $B$. Then
the result is proved in \cite[Theorem 6.8]{Ham2}. 
We include a more
modern proof for the readers convenience. Let $\Phi : A \to B(H)$ be
a completely isometric unital boundary representation in the sense of
 \cite{DM} (this paper is simplified in \cite{Arvnt}, where
these maps are said to have the {\em unique extension property}).
Then $\Phi$ extends to a unital
$*$-monomorphism from $C^*_{\rm e}(A)$ into $B(H)$, by definition of
a boundary representation, and the `essential' property of $C^*_{\rm
e}(A)$.  So we may
identify $A$ as a subspace of $B(H)$ such that the $C^*$-algebra it
generates inside $B(H)$ is $C^*_{\rm e}(A)$.  Similarly, we may
assume that $B \subset C^*_{\rm e}(B) \subset B(K)$ for a Hilbert
space $K$, with the inclusion map being a boundary representation.
We may view $B \otimes_{\rm min} A$ as a subspace of $B(K \otimes
H)$, and $C^*_{\rm e}(B) \otimes_{\rm min} C^*_{\rm e}(A)$ is the
$C^*$-subalgebra it generates, in $B(K \otimes H)$. The injective
envelope $I(B \otimes_{\rm min} A)$ may be viewed as a subspace of
$B(K \otimes H)$, and so the canonical map $\pi : C^*_{\rm e}(B)
 \otimes_{\rm min} C^*_{\rm
e}(A) \to C^*_{\rm e}(B \otimes_{\rm min} A)$ may be viewed as a
completely positive unital map into $B(K \otimes H)$. It suffices to
show that $\pi$ is one-to-one.    Let $\theta(y) = \pi(1 \otimes
y)$, a completely positive unital map $C^*_{\rm e}(A) \to B(K
\otimes H)$ extending $I_K \otimes I_A$.   The map $y \to I_K
\otimes y$ on $C^*_{\rm e}(A)$
 is a boundary representation too,
since any `multiple' of a boundary representation is easily seen to
be a boundary representation (using \cite[Proposition 4.1.12]{BLM}
if necessary). It follows that $\theta(y) = I_K \otimes y$ for all
$y \in C^*_{\rm e}(A)$.   Similarly, $\pi(x \otimes 1) = x \otimes
I_H$ for all $x \in C^*_{\rm e}(B)$.  Because of the latter, it
follows by 1.3.12 in \cite{BLM} that $\pi(x \otimes y) = (x \otimes
1) \pi(1 \otimes y)$ for all $x \in C^*_{\rm e}(B),
 y \in C^*_{\rm e}(A)$.  Thus  $\pi$ is the `identity
map', and is completely isometric.

Next, suppose that
$A, B$ have cais $(e_t), (f_s)$ respectively. Let $J$ be a boundary
ideal (see e.g.\ \cite{SOC} and p.\ 99 in \cite{BLM}) for $B
\otimes_{\rm min} A$ in $C^*_{\rm e}(B) \otimes_{\rm min} C^*_{\rm
e}(A)$. Then $J$ is also an ideal in $C^*_{\rm e}(B)^1 \otimes_{\rm
min} C^*_{\rm e}(A)^1$. Let $\theta : B^1 \otimes_{\rm min} A^1 \to
(C^*_{\rm e}(B)^1 \otimes_{\rm min} C^*_{\rm e}(A)^1)/J$ be the
canonical completely contractive morphism factoring through
$C^*_{\rm e}(B)^1 \otimes_{\rm min} C^*_{\rm e}(A)^1$. The
restriction of $\theta$ to $B \otimes_{\rm min} A$ is completely
isometric, being the composition of the canonical morphism $B
\otimes_{\rm min} A \to (C^*_{\rm e}(B) \otimes_{\rm min} C^*_{\rm
e}(A))/J$, and the `inclusion' $(C^*_{\rm e}(B) \otimes_{\rm min}
C^*_{\rm e}(A))/J \to (C^*_{\rm e}(B)^1 \otimes_{\rm min} C^*_{\rm
e}(A)^1)/J$. It follows from Proposition \ref{cesid} and Lemma
\ref{minpre} that $\theta$ is completely isometric. Hence $J = (0)$,
proving our result. \end{proof}

If we define the cone and suspension of a nonselfadjoint operator
algebra just as one does in the $C^*$-literature (we will not take
the time to review this; but remark there is a unitized and
nonunitized version of these constructions, both of which work for
our purposes in the sequel paper),
it follows from the last two results that:

\begin{corollary} \label{cos}  The cone and suspension operations both
commute with both of $C^*_{\rm e}$ and $C^*_{\rm max}$, at least for
approximately unital operator algebras.
\end{corollary}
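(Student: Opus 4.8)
The plan is to deduce Corollary~\ref{cos} directly from Lemma~\ref{max} and Theorem~\ref{minp}, once the cone and suspension are identified as tensor products of the given operator algebra with two specific $C^*$-algebras. Recall that in the $C^*$-literature the suspension of $A$ is $S A = C_0((0,1)) \otimes A$ and the cone is $C A = C_0((0,1]) \otimes A$; in the nonselfadjoint setting one takes the corresponding minimal (spatial) tensor products, $SA = C_0((0,1)) \otimes_{\rm min} A$ and $CA = C_0((0,1]) \otimes_{\rm min} A$, and there are also the unitized variants $C_0((0,1])^\sim \otimes_{\rm min} A$, etc. The key observation is that $C_0((0,1))$ and $C_0((0,1])$ are \emph{commutative}, hence \emph{nuclear}, $C^*$-algebras, so both Lemma~\ref{maxp} (in its min form, which requires nuclearity of the first factor) and Theorem~\ref{minp} apply with $B = C_0((0,1))$ or $B = C_0((0,1])$.

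First I would record the $C^*_{\rm e}$ assertion. By Theorem~\ref{minp}, for any approximately unital operator algebra $A$ we have
$$C^*_{\rm e}(C_0((0,1)) \otimes_{\rm min} A) = C^*_{\rm e}(C_0((0,1))) \otimes_{\rm min} C^*_{\rm e}(A) = C_0((0,1)) \otimes_{\rm min} C^*_{\rm e}(A),$$
using that $C_0((0,1))$ is already a $C^*$-algebra so is its own $C^*$-envelope; that is, $C^*_{\rm e}(SA) = S\, C^*_{\rm e}(A)$. The identical computation with $C_0((0,1])$ in place of $C_0((0,1))$ gives $C^*_{\rm e}(CA) = C\, C^*_{\rm e}(A)$, and the unitized versions follow the same way by using the unitizations of the interval algebras. (One should be slightly careful that Theorem~\ref{minp} is stated for two approximately unital operator algebras or two operator systems; here one factor is a genuine $C^*$-algebra, which is certainly in both classes, so there is no issue.)

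Next I would record the $C^*_{\rm max}$ assertion using Lemma~\ref{maxp}. Since $C_0((0,1))$ is a nuclear $C^*$-algebra, Lemma~\ref{maxp} gives
$$C^*_{\rm max}(C_0((0,1)) \otimes_{\rm min} A) = C_0((0,1)) \otimes_{\rm min} C^*_{\rm max}(A) = S\, C^*_{\rm max}(A),$$
and likewise $C^*_{\rm max}(CA) = C\, C^*_{\rm max}(A)$ and the unitized analogues. If instead one wishes to use the maximal tensor product in the definition of cone and suspension, the first part of Lemma~\ref{maxp} (which needs no nuclearity) applies verbatim. Either way, both constructions commute with $C^*_{\rm max}$.

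Honestly, there is no serious obstacle here: the corollary is a formal consequence of the two lemmas/theorem once one unwinds the definitions, which is exactly why the authors state it without proof and relegate the review of cone and suspension. The only point requiring a moment's care is the bookkeeping between the unitized and nonunitized versions of the cone and suspension and the matching unitizations of $C_0((0,1))$ and $C_0((0,1])$, together with checking that the tensor-product flavor ($\otimes_{\rm min}$ versus $\otimes_{\rm max}$) used in defining the cone/suspension is the one for which the invoked result (Theorem~\ref{minp} for min with $C^*_{\rm e}$, Lemma~\ref{maxp} for either with $C^*_{\rm max}$) actually applies; the nuclearity of the commutative interval algebras is what makes the min case of Lemma~\ref{maxp} available. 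All of this is routine, so I would present the proof in two short sentences invoking the two results and leave the unitized variants to the reader in the spirit of the rest of the paper.
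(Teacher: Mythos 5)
Your proposal is correct and is essentially the argument the paper intends: the corollary is stated as an immediate consequence of Lemma \ref{maxp} and Theorem \ref{minp}, applied with the first tensor factor equal to the (nuclear, commutative) $C^*$-algebras $C_0((0,1))$ and $C_0((0,1])$ (or their unitizations), exactly as you do. Your remarks on nuclearity being what makes the $\otimes_{\rm min}$ case of Lemma \ref{maxp} available, and on the unitized variants, match the level of detail the authors leave to the reader.
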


\section{Theory of extensions}

\subsection{The pullback}  Given three objects $A, B, C$ in a
 category, and morphisms $\alpha : A \to C, \beta : B \to C$, the
{\em pullback} $A \oplus_C B$ of $A$ and $B$ (along $\alpha$ and
$\beta$), is the object which, together with two fixed morphisms
$\gamma, \delta$ from this object to $A$ and $B$ respectively,
satisfies the universal property/diagram
$$
\xymatrix{ & A \ar[dl]_\alpha & & & & \\
C & & A \oplus_C B \ar[ul]^\gamma \ar[dl]_\delta & \ar[ull]_\mu
\ar[dll]^\nu D
\ar@{-->}[l]^\pi \\
& B \ar[ul]_\beta & & & \\
    } $$
That is, for any object $D$ and morphisms $\mu : D \to A, \nu : D
\to B$ with $\alpha \mu = \beta \nu$, there exists a unique morphism
$\pi : D \to A \oplus_C B$ such that the diagram commutes.
 By an obvious variant of the
usual argument, the pullback is unique up to the appropriate completely
isometric isomorphism.  Indeed, in  our
setting, concretely
$$A \oplus_C B = \{ (a,b) \in A \oplus^\infty B : \alpha(a) =
\beta(b) \} ,$$ and the morphisms $\gamma, \delta$ are just the
projections onto the two coordinates of the set in the last
displayed equation.   The map $\pi$ takes $d \in D$ to
$(\mu(d),\nu(d))$, of course.

It is easy to see that the pullback is closed (complete).  In the
category of approximately unital operator algebras, it is not true
in general that the pullback is approximately unital. However, in
Subsection \ref{diag1} we will give a condition under which it
always will be.

The {\em pushforward} construction in the category {\bf AUOA} works just
as in the $C^*$-algebra case.  Many results in the rest of this
paper can be phrased in terms of pushforwards, just as in
\cite{ELP,Peds} etc., and we leave this to the reader (see
\cite{Mth} for more details).

\subsection{Extensions} \label{ext}

If $A, C$ are nontrivial operator algebras, with $A$ approximately
unital,  then an {\em extension of $C$ by $A$} is an exact sequence
$$0 \lra A\overset{\alpha}\lra B\overset{\beta}\lra C \lra 0$$
where $B$ is an operator algebra, and $\alpha, \beta$ are completely
contractive homomorphisms, with $\alpha$ completely isometric, and
$\beta$ a complete quotient map.  Some of our results will
have variants valid with a weaker assumption on $A$ (cf.\ 
the annihilator ideal assumptions in \cite[Theorem 1.2.11]{Palm}), 
but for specificity we will not consider this here.
If we are working in the category {\bf AUOA} we will also want to
assume that $B, C$ are approximately unital too, of course.
Actually,
it usually causes no trouble if we assume that $A \subset B$, and
$\alpha$ is the inclusion map. Thus in future, we will sometimes
silently be assuming  this.

 The canonical example of an extension
is the {\em corona extension}
$$ \xymatrix{
   0  \ar[r] &   A   \ar[r]  &  \M(A)   \ar[r] &
   {\mathcal Q}(A)  \ar[r] & 0 \\
    } $$
This is the largest {\em essential extension} with first term $A$
(see \cite{Peds,Mth} for more details).  It is also what we call
a {\em unital extension}, namely an extension whose middle term
is unital.

\begin{proposition} \label{aau}  Given an extension
$$0 \lra A\overset{\alpha}\lra B\overset{\beta}\lra C  \lra 0$$
in the above sense,  $C$ is approximately unital iff
$B$ is approximately
unital.
\end{proposition}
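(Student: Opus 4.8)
The plan is to treat the two implications separately; the forward one is immediate and the reverse one is where the work lies. For ``$B$ approximately unital $\Rightarrow$ $C$ approximately unital'' I would simply push a cai forward: if $(u_i)$ is a cai for $B$, then each $\Vert \beta(u_i) \Vert \le 1$, and for $c \in C$, writing $c = \beta(b)$ (possible since $\beta$ is onto), we get $\beta(u_i) c - c = \beta(u_i b - b) \to 0$, and symmetrically on the other side; hence $(\beta(u_i))$ is a cai for $C$. Nothing beyond surjectivity of $\beta$ is used here, and $A$ plays no role.

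For ``$C$ approximately unital $\Rightarrow$ $B$ approximately unital'' I would pass to second duals and manufacture a two-sided identity of norm one for $B^{**}$. Assume $\alpha$ is the inclusion $A \subseteq B$, and let $(e_\lambda)$ be a cai for $A$ with weak* limit point $p \in A^{\perp\perp} = A^{**}$; then $p$ is the (Arens) identity of $A^{**}$, $\Vert p \Vert = 1$, and since $A^{\perp\perp}$ is a weak*-closed two-sided ideal of $B^{**}$ carrying its own identity, the standard computation $\eta p = p \eta p = p \eta$ shows $p$ is a central projection of $B^{**}$. Because $A$ is an approximately unital ideal of $B$, it is a (complete) $M$-ideal of $B$ --- the fact invoked in the proof of Lemma \ref{try} --- so the complementary completely contractive idempotent homomorphisms $\eta \mapsto \eta p$ and $\eta \mapsto \eta(1-p)$ split $B^{**}$ as an $\ell^\infty$-direct sum of weak*-closed two-sided ideals
$$B^{**} \; = \; A^{\perp\perp} \; \oplus^\infty \; B^{**}(1-p) ,$$
completely isometrically, with $B^{**}(1-p) \cong B^{**}/A^{\perp\perp} \cong (B/A)^{**} = C^{**}$ as dual operator algebras. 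Now $C$ approximately unital forces $C^{**}$ to be unital, say with identity $q$, $\Vert q \Vert = 1$; transporting $q$ back into $B^{**}(1-p) \subseteq B^{**}$ and setting $e := p + q$, I would check that $e$ is the identity of the direct sum $B^{**}$ (each summand being an ideal with identity $p$, resp.\ $q$) and that $\Vert e \Vert = \max(\Vert p \Vert, \Vert q \Vert) = 1$ by the $\ell^\infty$-nature of the decomposition. Finally, an operator algebra whose bidual carries a two-sided identity of norm one has a cai (see e.g.\ \cite[Section 2.5]{BLM}), so $B$ is approximately unital.

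The one genuinely delicate point is the norm estimate $\Vert e \Vert = 1$. A naive ``telescoping'' argument --- lift a cai of $C$ to contractions $g_s \in {\rm Ball}(B)$ (using proximinality of the $M$-ideal $A$) and combine with the cai of $A$ via something like $g_s + e_\lambda - e_\lambda g_s$ --- only produces a \emph{bounded} approximate identity for $B$, since $\Vert 1 - e_\lambda \Vert$ need not be controlled by $1$ for an operator-algebra cai; and, in contrast with the $C^*$-algebra case, a bai does not upgrade to a cai in general (already $\mathbb{C}$ with $\Vert z \Vert = 2|z|$ is a unital operator algebra with no cai). What rescues the contractive bound is precisely that the $M$-ideal structure makes $B^{**} = A^{\perp\perp} \oplus^\infty (B/A)^{**}$ an $\ell^\infty$-sum, so the crux of the write-up is to quote the $M$-ideal property of approximately unital ideals and to verify that $p + q$ is genuinely the identity there; everything else is bookkeeping with the Arens product.
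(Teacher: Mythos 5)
Your proposal is correct and follows essentially the same route as the paper: pass to second duals, take the central support projection $p$ of $A$ in $B^{**}$, use the decomposition $B^{**} = A^{\perp\perp} \oplus^\infty B^{**}(1-p)$ with $B^{**}(1-p) \cong C^{**}$ (established in the proof of Lemma \ref{try}), conclude that $B^{**}$ is unital, and invoke \cite[Proposition 2.5.8]{BLM} to descend to a cai for $B$. Your extra discussion of why the $\ell^\infty$-structure is what guarantees $\Vert p+q \Vert = 1$ is a sound elaboration of the same argument rather than a departure from it.
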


\begin{proof}  The one direction is obvious.  For the other,
we may suppose without loss of generality that $A \subset B \subset
D$, where $D$ is a unital operator algebra with identity which is
the $1$ appearing below, and $C = B/A$. Suppose that $p$ is the
(central) support projection for $A$ in $B^{**} = B^{\perp \perp}$.
As in the proof of Lemma \ref{try} we have $$A^{\perp \perp} =
B^{**} p \, , \; \; \;
B^{**}/A^{\perp
\perp} \cong B^{**} (1-p) \cong C^{**} .$$ Thus $B^{**} (1-p)$ is
unital, and hence so is $B^{**} = B^{**} p \oplus^\infty B^{**}
(1-p)$.  Thus $B$ is approximately unital (see \cite[Proposition
2.5.8]{BLM}).
\end{proof}

{\bf Remarks.} 1) \
Note that if $B$ is a $C^*$-algebra then $A, C$ are also
$C^*$-algebras.  Conversely, if $A, C$ are $C^*$-algebras, then $B$
is a $C^*$-algebra.  To see this note that $B^{**} \cong A^{**}
\oplus^\infty C^{**}$, and one may appeal to \cite[Lemma
7.1.6]{BLM}.

2) \ An extension of the complex numbers by $A$, with the middle
algebra $B$ unital, is exactly the same thing as a unitization of
$A$.

\subsection{Morphisms between extensions}  Given two
extensions of $C$ by $A$, it is of interest to find a dotted arrow
(namely, find a completely contractive homomorphism) making the
following diagram commutative:
$$
\xymatrix{ 0 \ar[r] & A  \ar[r]  \ar@{=}[d]  & B_1
    \ar[r] \ar@{-->}[d] &
    C \ar@{=}[d]   \ar[r] & 0\\
   0  \ar[r] &   A   \ar[r] &  B_2  \ar[r] &
    C \ar[r] & 0 \\
    } $$
If such a morphism exists, then it is unique (as we shall say again
later in a more general setting). We may regard the existence of
this morphism as giving a partial ordering on the set of extensions
of $C$ by $A$. If the dotted arrow is also a surjective complete
isometry then we say that the two extensions are {\em strongly
isomorphic}.    We write {\bf Ext}$(C,A)$ for the set of equivalence
classes with respect to strong isomorphism of extensions of $C$ by
$A$.  This is easily seen to be a `bi-functor'. Indeed any morphism
$\theta : C_1 \to C_2$ gives a map ${\bf Ext}(C_2,A) \to {\bf
Ext}(C_1,A)$ by what we call `Diagram I' below.  We remark that
Diagrams I, II, III, IV of \cite{ELP} work in our setting
essentially just as in that paper. Any proper morphism $A_1
 \to A_2$ gives  a map ${\bf Ext}(C,A_1) \to {\bf Ext}(C,A_2)$,
 by `Diagram III'.  If the
morphism is not proper then the functoriality is much deeper (see
the sequel to this paper).

As usual there is a notion of {\em split extension}, namely that
there exists a morphism, $\gamma : C \to B$ such that $\beta
\gamma=I_{C}$.  These include, but usually do not coincide with, the
extensions strongly isomorphic to the trivial extension (the  one
with
 $B = A \oplus^\infty C$, and $\alpha, \beta$ the obvious
 maps).  See \cite{Bus}.  A split extension is {\em strongly
 unital} if $\gamma$ above is also unital.
 Note that the second dual of any
 extension is, by the proof of Lemma \ref{try}, an extension
 which is strongly isomorphic to
 the trivial
 extension.  Along those lines we remark that a short exact sequence
 is an extension in the sense of our paper iff its second dual
 is an extension.

More generally, given two extensions in {\bf Ext}$(C_1,A_1)$ and
{\bf Ext}$(C_2,A_2)$, a morphism from the first to the second is a
commutative diagram:

$$
\xymatrix{ 0 \ar[r] & A_1  \ar[r]  \ar[d]  & B_1
    \ar[r] \ar[d] &
    C_1 \ar[d]   \ar[r] & 0\\
   0  \ar[r] &   A_2   \ar[r] &  B_2  \ar[r] &
    C_2 \ar[r] & 0 \\
    } $$
    We will see in Theorem \ref{bumo} that if the
    left vertical arrow is
    proper, then the vertical arrow
    in the middle is uniquely
    determined by the other two vertical arrows, and we also list
   there a criterion for its existence.
Two extensions of $C$ by $A$ are {\em weakly equivalent} if there
exists a morphism from one onto the other, in the latter sense, with
all three vertical arrow surjective complete isometries.  There are
other notions of equivalence which we consider in the sequel to
this paper and \cite{Mth}.   

We state a variant of the `five lemma' from algebra:

\begin{lemma} \label{five}  Given a  morphism  between extensions
as in the above diagram: if the outer vertical arrows are complete
isometries (resp.\ complete quotient maps), then so is the middle
vertical arrow.
\end{lemma}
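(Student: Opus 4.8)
The plan is to prove the two assertions (complete isometry and complete quotient map) separately, in each case exploiting the commutative diagram together with exactness of the two rows. Write the diagram with vertical maps $a: A_1 \to A_2$, $b: B_1 \to B_2$, $c: C_1 \to C_2$, all completely contractive, with the rows exact in the sense of Subsection \ref{ext}. Since everything we must check is `at all matrix levels', I will phrase the argument for $b$ itself and remark that applying it to the amplified diagram (of $M_n$'s of each term, which is again a morphism of extensions) gives the matricial statements; so it suffices to show $b$ is isometric (resp.\ a quotient map) given that $a,c$ are.

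For the complete isometry case, suppose $a$ and $c$ are completely isometric. The quickest route is to pass to second duals, using the structural description from the proof of Lemma \ref{try}: if $A_i \subset B_i$ with $p_i \in B_i^{**}$ the central support projection of $A_i$, then $B_i^{**} \cong A_i^{\perp\perp} \oplus^\infty C_i^{**} = B_i^{**} p_i \oplus^\infty B_i^{**}(1-p_i)$, and the vertical maps respect this decomposition: $b^{**}$ restricted to the $p_1$-summand is (completely isometrically) $a^{**}$, and the induced map on the $(1-p_1)$-summand is $c^{**}$. Since $a$ and $c$ completely isometric implies $a^{**}$ and $c^{**}$ completely isometric (the second adjoint of a complete isometry between operator spaces is a complete isometry), and since a direct sum $T\oplus S$ with respect to $\oplus^\infty$ is isometric iff both $T$ and $S$ are, we get that $b^{**}$ is completely isometric on $B_1^{**} = B_1^{**}p_1 \oplus^\infty B_1^{**}(1-p_1)$, hence $b$ is completely isometric. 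I expect the only point needing care here is checking that $b^{**}$ genuinely preserves the summands, i.e.\ that $b^{**}(p_1) = p_2$; this follows because $b$ maps $A_1$ into $A_2$ and a cai of $A_1$ to (a net with the same weak* limit point behaviour as) one hitting the support projection of $A_2$ — essentially the computation already done in Lemma \ref{try}.

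For the complete quotient case, suppose $a$ and $c$ are complete quotient maps; I want $b(\mathrm{Ball}(B_1)) \supseteq \mathrm{Ball}(B_2)$ (the reverse containment is just complete contractivity). Given $y \in \mathrm{Ball}(B_2)$ with image $\beta_2(y) = z \in \mathrm{Ball}(C_2)$, lift $z$ through the complete quotient map $c$ to some $z_1 \in \mathrm{Ball}(C_1)$ (up to an $\varepsilon$, then tidy up at the end), then lift $z_1$ through $\beta_1$ to $x_1 \in \mathrm{Ball}(B_1)$ using that $\beta_1$ is a complete quotient map. Then $b(x_1)$ and $y$ have the same image under $\beta_2$, so $y - b(x_1) \in \ker \beta_2 = \alpha_2(A_2)$; write it as $\alpha_2(a_2)$. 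Now pull $a_2$ back through the complete quotient map $a$ to $a_1 \in A_1$ and set $x_1' = x_1 + \alpha_1(a_1)$; then $b(x_1') = b(x_1) + \alpha_2(a_2) = y$. The genuine obstacle, and the reason this is a `variant' of the five lemma rather than the literal one, is the norm control: the naive $x_1'$ need not lie in the ball, and one cannot in general choose the intermediate lifts to make the norms add up correctly. The fix is the standard quantitative successive-approximation argument (as in the proof that an extension of complete quotient maps is a complete quotient map for $C^*$-algebras, or as in the proximinality arguments of \cite[Section 4.8]{BLM}): build sequences converging geometrically, using at each stage that $\alpha_1(A_1)$ is proximinal in $B_1$ (it is approximately unital, hence an $M$-ideal, by the argument at the end of Lemma \ref{try}), so that the error terms can be absorbed with summable norm. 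Running this to convergence produces a genuine preimage in $\mathrm{Ball}(B_1)$ (or, allowing $\varepsilon$, in $(1+\varepsilon)\mathrm{Ball}(B_1)$, which suffices). Applying the whole argument to the amplified diagram gives the statement at all matrix levels, completing the proof.
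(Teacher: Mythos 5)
Your overall instinct for the isometry half (pass to second duals, where both rows become $\oplus^\infty$-decomposed trivial extensions) is also the paper's starting point, but your argument has a genuine gap exactly where you flag "the only point needing care": you assert that $b^{**}$ is diagonal for the decompositions, i.e.\ $b^{**}(p_1)=p_2$, on the grounds that $b$ carries a cai of $A_1$ to something hitting the support projection of $A_2$. That is false in general: the left arrow $a$ is only assumed completely isometric, not proper, so $a$ of a cai of $A_1$ need not be a cai of $A_2$. Concretely, take $A_1=\Cdb$, $A_2=\Cdb^2$, $a(x)=(x,0)$, $C_1=C_2=\Cdb$, $c={\rm id}$, $B_1=\Cdb^2$, $B_2=\Cdb^3$, $b((x,y))=(x,y,y)$: all arrows are complete isometries, but $b^{**}(p_1)=(1,0,0)\neq p_2=(1,1,0)$ and $b^{**}$ is not $a^{**}\oplus c^{**}$. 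What is true, and what the paper's proof actually establishes, is that the middle map has the form $\rho((a,c))=(\mu(a)+\delta(c),\nu(c))$ with a cross term $\delta:C_1^{**}\to A_2^{\perp\perp}$, and that multiplicativity (via $\rho(1,0)=\rho(1,0)\rho(1,c)$) forces $\mu(1)$ to be orthogonal to the range of $\delta$; this yields $\Vert\rho((a,c))\Vert=\max\{\Vert\mu(a)\Vert,\Vert\delta(c)\Vert,\Vert\nu(c)\Vert\}$, which is $\geq\max\{\Vert a\Vert,\Vert c\Vert\}$ when $\mu,\nu$ are isometries. So your conclusion is reparable, but only after the cross term is identified and shown to sit orthogonally, a step absent from your write-up.

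The quotient half is where the gap really bites. Your chase produces a preimage of $y\in{\rm Ball}(B_2)$ of norm about $3$, and you defer the improvement to "the standard successive-approximation argument". That device only reduces the problem to showing ${\rm Ball}(B_2)\subseteq\overline{b({\rm Ball}(B_1))}$, which is the same norm-control problem again, and nothing in your chase establishes it. Worse, your chase uses only Banach-space-level data (exactness, quotient maps, proximinality of the kernels), and at that level the lemma is false: the identity map $A\oplus_1 C\to A\oplus^\infty C$, with identity outer arrows, is a contractive morphism of "extensions" whose first terms are proximinal and whose outer arrows are simultaneously isometries and quotient maps, yet the middle map is neither. So any correct proof must exploit the algebra and the cai, and the paper's does: when $\mu=a^{**}$ is a surjective homomorphism between the (unital) second duals it satisfies $\mu(1)=p_2$, so the orthogonality above forces $\delta=0$, whence $\rho=\mu\oplus^\infty\nu$ is a complete quotient map, and one descends from $b^{**}$ to $b$. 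I would encourage you to reorganize your proof around the cross term $\delta$ and its vanishing rather than around lifting elements.
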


\begin{proof}
 Taking second duals of all algebras and morphisms in the diagram,
we may assume that both rows in the diagram are trivial extensions.
It then is easy algebra to see that the middle map is of the form
$\rho((a,c)) = (\mu(a) + \delta(c),\nu(c))$, for a morphism $\delta$,
where
$\mu, \nu$ are the outer vertical arrows.   Since $\rho(1,0) = \rho(1,0) \rho(1,c)$,
it follows that the projection $\mu(1)$ is orthogonal to the range
of $ \delta$.  Thus if $\mu, \nu$ are complete quotient maps,
then $\delta = 0$, and $\rho$ is then easily seen a complete quotient map.
We also have $\Vert \rho((a,c)) \Vert = \max \{ \Vert \mu(a) \Vert , \Vert \delta(c) \Vert , \Vert
\nu(c) \Vert \}$, and similarly for matrices, from which the complete isometry case
follows.
\end{proof}

\subsection{Diagram I} \label{diag1} \ The next tool we mention is what is
called {\em Diagram I} in \cite{ELP,Peds}:
$$
\xymatrix{
    0 \ar[r] & \circ  \ar@{-->}[r]  \ar@{-->}[d]  & \circ
    \ar@{-->}[r] \ar@{-->}[d] &
    C' \ar[d]^{\gamma}   \ar[r] & 0\\
   0  \ar[r] &   A   \ar[r]^{\alpha}  &  B   \ar[r]^{\beta} &
    C \ar[r] & 0 \\
    } $$
That is, given an extension of operator algebras (the bottom row),
and a completely contractive homomorphism $\gamma$ from an operator
algebra $C'$ as shown ($C$ is approximately unital of course, if we
are working in {\bf AUOA}), then we can complete the first row to an
extension of operator algebras, and find vertical morphisms so that
the diagram commutes. In fact, one completion of the diagram is as
follows
$$
\xymatrix{
    0 \ar[r] & A  \ar@{-->}[r]^{\tilde{\alpha}}  \ar@{=}[d]  & B
    \oplus_C C'
    \ar@{-->}[r]^{q_2} \ar@{-->}[d]^{q_1} &
    C' \ar[d]^{\gamma}   \ar[r] & 0\\
   0  \ar[r] &   A   \ar[r]^{\alpha}  &  B   \ar[r]^{\beta} &
    C \ar[r] & 0 \\
    } $$
Here $B \oplus_C C' = \{ (b,c') \in B \oplus^\infty C' : \beta(b) =
\gamma(c') \}$ is a pullback.
The maps $q_1, q_2$ in the diagram are the
canonical projection onto $B$ and $C'$ respectively.
It is easy to see that $q_2$ is a complete quotient map, and further
details are just as in \cite[Theorem 1.2.10]{Palm}.  
Note that it follows
from Proposition \ref{aau} that the pullback $B \oplus_C C'$ is
approximately unital if $C'$ is. 

This `completion' of Diagram I is the universal
one. That is, given any other extension constituting the top row of
a commuting Diagram I, this extension factors through the one in the
last paragraph, just as in \cite{ELP}.

As we remarked earlier,  Diagrams II, III, IV of \cite{ELP} also
work in our setting just as in that paper, and we will use these
tools without comment.  Similarly for their notion of {\em corona
extendibility}, which we shall not study here.

\subsection{The Busby invariant}

 Given any extension $$E \, : \; \; \; 0 \to A \overset{\alpha}\lra B
\overset{\beta}\lra C \to 0 $$
 of $C$ by $A$, there is a morphism $\tau : C \to {\mathcal Q}(A)$
 defined by $\tau(c) = \sigma(b)  + A$, where $\sigma : B \to \M(A)$
 is the canonical map (namely, $\sigma(b)(a) = \alpha^{-1}(b
 \alpha(a))$, for $a \in A, b \in B$), and $\beta(b) = c$.
 We call $\tau$ the {\em Busby
 invariant} of the extension $E$.  The Banach algebra variant 
may be found in \cite[Theorem 1.2.11]{Palm}, which we now explain
briefly in our context.
 If $\tilde{\beta}
 : B/\alpha(A) \to C$ is the surjective complete isometry induced by
  the complete quotient map $\beta$, and if $\tilde{\sigma} :
  B/\alpha(A) \to \M(A)/A$ is the complete contraction induced
  from $\sigma : B \to \M(A)$, then $\tau = \tilde{\sigma} \circ
  \tilde{\beta}^{-1}$.
 This shows that $\tau$ is well defined and completely
 contractive.

 Given two operator algebras $A, C$, with at least $A$
 approximately unital, and
 a completely contractive homomorphism $\gamma : C \to
{\mathcal Q}(A)$, the Diagram I tool above gives an
extension of $C$ by $A$:
$$
\xymatrix{
    0 \ar[r] & A  \ar@{-->}[r]^{\tilde{\iota}}
     \ar@{=}[d]  & {\rm PB}    \ar@{-->}[r]^{q_2} \ar@{-->}[d]^{q_1} &
    C \ar[d]^{\gamma}   \ar[r] & 0\\
   0  \ar[r] &   A   \ar[r]^{\iota}  &  \M(A)   \ar[r]^{\pi_A} &
   {\mathcal Q}(A)  \ar[r] & 0 \\
    } $$
    Here PB is the pullback $\M(A) \oplus_{{\mathcal Q}(A)} C$.
    We call this the {\em pullback extension constructed from $\gamma$}.
As usual, the Busby invariant of the latter extension is exactly
$\gamma$.  Conversely,   
any extension
 $$E \, : \; \;  0\lra A\overset{\alpha}\lra B\overset{\beta}\lra C\lra 0 $$
 is strongly
 isomorphic to the
 pullback
extension constructed from $\tau$ as in the last paragraph, taking
$\tau$ to be the Busby invariant of the  extension $E$. That is,
there is a completely isometric surjective morphism $\varphi$ making
the following commute:
$$ \xymatrix{
    0 \ar[r] & A  \ar[r]^\alpha \ar@{=}[d] &  B   \ar[r]^\beta
    \ar[d]^\varphi &
      C \ar@{=}[d]   \ar[r] & 0\\
   0 \ar[r] & A  \ar[r]^{\tilde{\iota}} &
     PB    \ar[r]^{q_{2}} &    C\ar[r] & 0 \\
  } $$
where PB is the pullback $\M(A) \oplus_{{\mathcal Q}(A)} C$ along
$\pi_A$ and  $\tau$.
Indeed, by definition of the pullback there is a
canonical completely contractive morphism $\varphi : B \to \M(A)
\oplus_{{\mathcal Q}(A)} C$, given by
$\varphi(b) = (\sigma(b),\beta(b))$, for $b \in B$. That  
 $\varphi$ is completely isometric and surjective,
 follows from Lemma \ref{five}.

As usual, the {\em trivial extension} (the  one with
$B = A \oplus^\infty C$), has Busby invariant
the zero map, and we also have (see \cite[Theorem 1.2.11]{Palm}):

\begin{theorem} \label{bus}  There is a bijection between {\bf
Ext}$(C,A)$ and {\rm Mor}$(C,{\mathcal Q}(A))$, the space of
completely contractive homomorphisms $\tau : C \to {\mathcal Q}(A)$.
There is a (non-bijective) correspondence between the equivalence
classes of split extensions and {\rm Mor}$(C,{\mathcal M}(A))$. In
fact, an extension is split precisely when its Busby invariant
equals $\pi_A \circ \eta$ for some $\eta \in {\rm Mor}(C,{\mathcal
M}(A))$.  If $C$ is unital the bijection above restricts to a
bijection between the unital extensions (that is, those with middle
term a unital algebra), and unital morphisms (those taking $1$ to
$1$).
 \end{theorem}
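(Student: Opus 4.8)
The plan is to establish the four assertions of Theorem \ref{bus} in sequence, using the pullback-extension machinery and the Busby invariant $\tau$ as developed in the preceding subsection. First I would define the map $\Psi: {\bf Ext}(C,A) \to {\rm Mor}(C,{\mathcal Q}(A))$ by sending (the strong isomorphism class of) an extension $E$ to its Busby invariant $\tau_E$. For this to be well-defined I must check that strongly isomorphic extensions have the same Busby invariant: if $\varphi: B_1 \to B_2$ is a surjective complete isometry commuting with the identity maps on $A$ and $C$, then by uniqueness of the canonical map $\sigma: B_i \to \M(A)$ (it depends only on the $A$-bimodule structure) one has $\sigma_2 \circ \varphi = \sigma_1$, so $\tau_{E_1} = \tau_{E_2}$. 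This is a routine diagram chase. Conversely, to go backwards I use the \emph{pullback extension constructed from $\gamma$} described just above the theorem statement: given $\gamma \in {\rm Mor}(C, {\mathcal Q}(A))$, form $\mathrm{PB} = \M(A) \oplus_{{\mathcal Q}(A)} C$ and the top row of the displayed Diagram I; this is an extension of $C$ by $A$ (here I invoke that $A$ is approximately unital so the first map $\tilde\iota$ is completely isometric, and that $q_2$ is a complete quotient map as noted in Subsection \ref{diag1}), and its Busby invariant is precisely $\gamma$. Thus $\Psi$ is surjective.

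For injectivity of $\Psi$, I would invoke the displayed result immediately preceding the theorem: any extension $E$ with Busby invariant $\tau$ is \emph{strongly} isomorphic to the pullback extension constructed from $\tau$, via the completely isometric surjective morphism $\varphi(b) = (\sigma(b), \beta(b))$, whose complete isometry and surjectivity follow from Lemma \ref{five}. Hence if two extensions have the same Busby invariant they are both strongly isomorphic to the same pullback extension, hence to each other; this gives $\Psi$ injective, completing the bijection ${\bf Ext}(C,A) \cong {\rm Mor}(C,{\mathcal Q}(A))$.

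Next I would treat the split case. If $E$ is split with splitting $\gamma: C \to B$, $\beta\gamma = I_C$, then $\eta := \sigma \circ \gamma : C \to \M(A)$ is a morphism with $\pi_A \circ \eta = \pi_A \circ \sigma \circ \gamma = \tilde\sigma \circ \tilde\beta^{-1} \circ \beta \circ \gamma = \tau$ (using the formula $\tau = \tilde\sigma\circ\tilde\beta^{-1}$ from the Busby subsection and $\tilde\beta^{-1}\beta\gamma = $ the canonical image of $\gamma$ in $B/\alpha(A)$, which $\tilde\sigma$ sends to $\sigma(\gamma(\cdot)) + A$). Conversely, if $\tau = \pi_A \circ \eta$ for some $\eta \in {\rm Mor}(C, \M(A))$, then in the pullback model $\mathrm{PB} = \M(A)\oplus_{{\mathcal Q}(A)} C$ the map $c \mapsto (\eta(c), c)$ is a well-defined morphism (since $\pi_A(\eta(c)) = \tau(c)$) splitting $q_2$; transporting through the strong isomorphism $\varphi$ shows $E$ is split. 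The correspondence with ${\rm Mor}(C,\M(A))$ is not bijective precisely because distinct $\eta$ can yield strongly isomorphic (or even equal) split extensions. Finally, for the unital refinement: when $C$ is unital, an extension has unital middle term $B$ iff $\M(A) \oplus_{{\mathcal Q}(A)} C$ is unital in the pullback model, and since $\M(A)$ and (if $B$ is unital) $C$ are unital, the pullback is unital iff $\tau(1_C) = 1_{{\mathcal Q}(A)}$, i.e. iff $\tau$ is unital; conversely a unital $\tau$ forces $(1_{\M(A)}, 1_C) \in \mathrm{PB}$ to be an identity. I would phrase this using Proposition \ref{aau} together with the observation that $C$ unital and $\tau$ unital make the pullback contain the pair of identities.

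The main obstacle I anticipate is the bookkeeping around \emph{strong} isomorphism versus mere weak equivalence — ensuring at each step that the isomorphisms produced genuinely restrict to the identity on $A$ and $C$ (not just to some isomorphisms of $A$ and $C$), so that we land in the correct equivalence classes defining ${\bf Ext}(C,A)$. This is where the explicit formula $\varphi(b) = (\sigma(b), \beta(b))$ and the already-established fact that $E$ is strongly isomorphic to its pullback model do the real work; everything else is a diagram chase, and the delicate point is simply to appeal to Lemma \ref{five} and to the uniqueness of $\sigma$ at exactly the right moments.
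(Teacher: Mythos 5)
Your proposal is correct and follows essentially the same route the paper intends: the paper gives no separate proof of Theorem \ref{bus}, instead citing Palmer and relying on the immediately preceding constructions (the Busby invariant $\tau=\tilde{\sigma}\circ\tilde{\beta}^{-1}$, the pullback extension built from a morphism $\gamma$, and the strong isomorphism $\varphi(b)=(\sigma(b),\beta(b))$ established via Lemma \ref{five}), which is exactly the machinery you assemble. Your handling of the split and unital cases is the standard argument and matches what the paper leaves implicit.
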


Thus we often refer to an element of ${\rm Mor}(C,\co(A))$ as an
extension of $C$ by $A$.

 Recall
that given two extensions in {\bf Ext}$(C_1,A_1)$ and {\bf
Ext}$(C_2,A_2)$, a morphism from the first to the second is a
commutative diagram:
$$
\xymatrix{ 0 \ar[r] & A_1  \ar[r]  \ar[d]^\mu  & B_1
    \ar[r] \ar[d]^\rho &
    C_1 \ar[d]^\nu   \ar[r] & 0\\
   0  \ar[r] &   A_2   \ar[r] &  B_2  \ar[r] &
    C_2 \ar[r] & 0 \\
    } $$
    If $\mu$ is a proper morphism, then
    $\mu$ extends to a map $\M(A_1) \to \M(A_2)$, which in
    turn induces a map $\tilde{\mu} : \co(A_1) \to \co(A_2)$.
    This notation is used in the following:

    \begin{theorem} \label{bumo}  Given two extensions as above,
    and morphisms $\mu : A_1 \to A_2$ and $\nu : C_1 \to C_2$,
    with $\mu$ proper, then there exists a (necessarily unique)
    morphism $\rho : B_1 \to
    B_2$ so that the diagram above commutes if and only if
we have $\tilde{\mu} \circ \tau_1 = \tau_2 \circ \nu$, in the
notation above.  Here $\tau_1, \tau_2$ are the Busby invariants of
the two
    sequences.
\end{theorem}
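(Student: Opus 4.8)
The plan is to route everything through the canonical maps $\sigma_i : B_i \to \M(A_i)$ and the identity $\pi_{A_i}\circ\sigma_i = \tau_i\circ\beta_i$, which is just a restatement of the definition of the Busby invariant ($\tau_i(c) = \pi_{A_i}(\sigma_i(b))$ whenever $\beta_i(b) = c$). The technical heart of the argument is the following claim $(\star)$: \emph{if $\rho : B_1\to B_2$ is a morphism with $\rho\circ\alpha_1 = \alpha_2\circ\mu$, then $\sigma_2\circ\rho = \bar\mu\circ\sigma_1$}. Assuming for specificity that $\alpha_1,\alpha_2$ are inclusions, one proves $(\star)$ by noting that for $b\in B_1$ and $a\in A_1$,
$$\rho(b)\,\mu(a) = \rho(b\,a) = \rho(\sigma_1(b)\,a) = \mu(\sigma_1(b)\,a),$$
so $\sigma_2(\rho(b))$ and $\bar\mu(\sigma_1(b))$ act the same way as left multipliers on $\mu(A_1)$; since $\mu$ is proper, applying $\mu$ to a cai of $A_1$ shows $\mu(A_1)A_2$ is dense in $A_2$, so the two agree as left multipliers of $A_2$, and by the symmetric computation on the right they coincide in $\M(A_2)$. (Here I use that the canonical extension $\bar\mu$ satisfies $\bar\mu(m)\mu(a) = \mu(ma)$ and $\mu(a)\bar\mu(m) = \mu(am)$ for $m\in\M(A_1)$, $a\in A_1$; see \cite[Section 2.6]{BLM}.)

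Granting $(\star)$, the forward implication is a one-line diagram chase: if $\rho$ exists then
$$\tilde\mu\circ\tau_1\circ\beta_1 = \tilde\mu\circ\pi_{A_1}\circ\sigma_1 = \pi_{A_2}\circ\bar\mu\circ\sigma_1 = \pi_{A_2}\circ\sigma_2\circ\rho = \tau_2\circ\beta_2\circ\rho = \tau_2\circ\nu\circ\beta_1,$$
using in turn $\pi_{A_1}\sigma_1 = \tau_1\beta_1$, the defining property $\pi_{A_2}\bar\mu = \tilde\mu\pi_{A_1}$ of $\tilde\mu$, $(\star)$, $\pi_{A_2}\sigma_2 = \tau_2\beta_2$, and commutativity of the right-hand square; since $\beta_1$ is surjective this gives $\tilde\mu\circ\tau_1 = \tau_2\circ\nu$. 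Uniqueness also follows from $(\star)$: if $\rho,\rho'$ both complete the diagram then $\sigma_2\circ\rho = \bar\mu\circ\sigma_1 = \sigma_2\circ\rho'$ and (right square) $\beta_2\circ\rho = \beta_2\circ\rho'$, and since the strong isomorphism $\varphi_2 : b_2\mapsto(\sigma_2(b_2),\beta_2(b_2))$ of $B_2$ onto the pullback $\M(A_2)\oplus_{\co(A_2)}C_2$ (see the discussion preceding Theorem \ref{bus}) is in particular injective, $\rho = \rho'$.

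For the converse, assume $\tilde\mu\circ\tau_1 = \tau_2\circ\nu$ and use $\varphi_2$ to identify $B_2$ with $\M(A_2)\oplus_{\co(A_2)}C_2$; define
$$\rho(b_1) = \varphi_2^{-1}\bigl(\bar\mu(\sigma_1(b_1)),\,\nu(\beta_1(b_1))\bigr), \qquad b_1\in B_1.$$
One first checks the pair lies in the pullback, i.e.\ $\pi_{A_2}(\bar\mu(\sigma_1(b_1))) = \tau_2(\nu(\beta_1(b_1)))$: the left side equals $\tilde\mu(\pi_{A_1}(\sigma_1(b_1))) = \tilde\mu(\tau_1(\beta_1(b_1)))$, which is the right side by hypothesis. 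Then $\rho$ is a completely contractive homomorphism ($\varphi_2^{-1}$ is a completely isometric homomorphism, and $b_1\mapsto(\bar\mu(\sigma_1(b_1)),\nu(\beta_1(b_1)))$ is completely contractive and multiplicative into $\M(A_2)\oplus^\infty C_2$); reading off the second coordinate gives $\beta_2\circ\rho = \nu\circ\beta_1$; and $\rho\circ\alpha_1 = \alpha_2\circ\mu$ holds because both sides have equal image under the injective $\varphi_2$ — the first coordinates agree since $\sigma_1\alpha_1$, $\sigma_2\alpha_2$ are the canonical ideal embeddings and $\bar\mu$ restricts to $\mu$, and the second coordinates both vanish.

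The main obstacle is the claim $(\star)$: understanding how an extending morphism $\rho$ interacts with the canonical maps into the multiplier algebras. This is precisely where properness of $\mu$ enters — it is what makes $\bar\mu$ available at all and what makes $\mu(A_1)A_2$ and $A_2\mu(A_1)$ dense in $A_2$, without which $\sigma_2\circ\rho$ and $\bar\mu\circ\sigma_1$ could not be forced to agree from their action on $\mu(A_1)$ alone. The rest is routine manipulation of the commuting squares together with the Busby identity $\pi_{A_i}\circ\sigma_i = \tau_i\circ\beta_i$.
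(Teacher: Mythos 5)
Your proof is correct, and it is essentially the argument the paper intends: the paper's "proof" is only a citation to \cite[Lemma 2.1 and Theorem 2.2]{ELP}, whose route is exactly yours — first the compatibility $\sigma_2\circ\rho=\bar\mu\circ\sigma_1$ (your claim $(\star)$, where properness of $\mu$ is used just as you use it), then the diagram chase through $\pi_{A_i}\circ\sigma_i=\tau_i\circ\beta_i$ for necessity and uniqueness, and the pullback realization $B_2\cong\M(A_2)\oplus_{\co(A_2)}C_2$ to construct $\rho$ for sufficiency. You have in effect written out the transcription to the nonselfadjoint setting that the authors leave to the reader.
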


\begin{proof}  See \cite[Lemma 2.1 and Theorem 2.2]{ELP} for details of the proof.
\end{proof}

{\bf Remark.}  The result \cite[Theorem 2.4]{ELP}, which is
technically important in that paper, also easily carries over to
nonselfadjoint algebras, as well as many of its consequences.  See
\cite{Mth} for details.

\medskip

An extension will be called {\em essential} if $\alpha(A)$ is an
essential ideal in $B$ in the sense that the canonical  map $\sigma
: B \to \M(A)$ is one-to-one; this turns out to be equivalent to the
associated Busby invariant
 $\tau$ being  one-to-one.  We  say that an extension is
{\em  completely essential} if $\sigma$ is completely isometric.

\begin{lemma} \label{cess} An extension is completely essential iff
the associated Busby invariant is completely isometric.
\end{lemma}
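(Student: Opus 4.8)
The plan is to relate the map $\sigma : B \to \M(A)$ directly to the Busby invariant $\tau : C \to \co(A)$ using the factorization $\tau = \tilde\sigma \circ \tilde\beta^{-1}$ established just before Theorem \ref{bus}. Recall that $\tilde\beta : B/\alpha(A) \to C$ is a surjective complete isometry, and $\tilde\sigma : B/\alpha(A) \to \co(A)$ is the complete contraction induced by $\sigma$. So $\sigma$ being completely isometric should correspond exactly to $\tilde\sigma$ being completely isometric (modulo knowing that $\sigma$ vanishes on $\alpha(A)$ in the right way and that quotienting is harmless), which in turn is equivalent to $\tau$ being completely isometric since $\tilde\beta^{-1}$ is a complete isometry.

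The key steps, in order, are as follows. First I would recall that $\sigma(\alpha(A)) \subset A$; indeed $\sigma$ restricted to $\alpha(A)$ is (identified with) the canonical embedding $A \hookrightarrow \M(A)$, and in particular $\pi_A \circ \sigma$ annihilates $\alpha(A)$, so $\tilde\sigma$ is well-defined on $B/\alpha(A)$. Next, I claim that $\sigma$ is completely isometric if and only if $\tilde\sigma$ is completely isometric. The forward direction is the substantive one: assuming $\sigma$ completely isometric, I want to show that for $[b_{ij}] \in M_n(B)$ we have $\| [b_{ij} + \alpha(A)] \|_{M_n(B/\alpha(A))} = \| [\pi_A(\sigma(b_{ij}))] \|_{M_n(\co(A))}$. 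Since $\sigma$ is a completely isometric homomorphism and $\alpha(A)$ is an approximately unital ideal in $B$ mapped by $\sigma$ onto an approximately unital ideal in the closed subalgebra $\sigma(B)$ of $\M(A)$ (namely $\sigma(\alpha(A)) \cong A$, which has a common cai with $A$), Lemma \ref{try2} applies with ${\mathcal E} = \M(A)$, ${\mathcal D} = A$, and gives $B/\alpha(A) \cong \sigma(B)/\sigma(\alpha(A)) \subset \M(A)/A = \co(A)$ completely isometrically; this inclusion is exactly $\tilde\sigma$. Conversely, if $\tilde\sigma$ is completely isometric, then since $\alpha(A)$ is a completely essential ideal in $B$ (by Proposition \ref{cesid}, or directly: the restriction of $\sigma$ to $\alpha(A)$ is completely isometric and $\tilde\sigma$ completely isometric forces $\sigma$ completely isometric via the same matrix-norm/cai computation as in the proof of (iii) $\Rightarrow$ (i) of Proposition \ref{cesid}), we conclude $\sigma$ is completely isometric.

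Finally, combining $\tau = \tilde\sigma \circ \tilde\beta^{-1}$ with the fact that $\tilde\beta$ (hence $\tilde\beta^{-1}$) is a surjective complete isometry, we get that $\tau$ is completely isometric if and only if $\tilde\sigma$ is, if and only if $\sigma$ is, which is the definition of the extension being completely essential. I expect the main obstacle to be the careful bookkeeping identifying $\tilde\sigma$ with the map supplied by Lemma \ref{try2} — in particular verifying that $\sigma(\alpha(A))$ really is (isometrically) $A$ sitting inside $\M(A)$ with the correct cai, so that the hypothesis "common cai for ${\mathcal D}$ and $A$" in Lemma \ref{try2} is met and the conclusion is the map we want. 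Once that identification is pinned down, the rest is the routine matrix-norm argument already rehearsed in Proposition \ref{cesid}, which I would invoke rather than repeat.
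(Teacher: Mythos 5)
Your forward direction ($\sigma$ completely isometric $\Rightarrow$ $\tau$ completely isometric) is correct and is essentially the paper's own argument: identify $\tilde{\sigma}$ with the inclusion $\sigma(B)/\sigma(\alpha(A)) = \sigma(B)/A \hookrightarrow \M(A)/A$ supplied by Lemma \ref{try2} (the common-cai hypothesis is met since $\sigma(\alpha(A))$ is literally the canonical copy of $A$ in $\M(A)$), then compose with the complete isometry $\tilde{\beta}^{-1}$.

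The converse direction as you justify it has a gap. You need the implication: $\sigma|_{\alpha(A)}$ completely isometric together with $\tilde{\sigma}$ completely isometric forces $\sigma$ completely isometric. This ``three-space'' implication is false for general complete contractions (an isometric restriction plus an isometric induced quotient map do not make a map isometric), and neither justification you offer delivers it here. Citing Proposition \ref{cesid} is circular, since ``$\alpha(A)$ is a completely essential ideal in $B$'' is by definition the statement ``$\sigma$ is completely isometric'' that you are trying to prove. The matrix-norm/cai computation in (iii) $\Rightarrow$ (i) of that proposition proves something different: it establishes $\Vert[\sigma(b_{ij})]\Vert = \sup_t \Vert[b_{ij}e_t]\Vert$ and then compares against an already-given completely isometric map $\rho$ defined on all of $B$ obeying the same multiplier-norm formula; no such $\rho$ is available in your setting. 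What is actually needed is the estimate $\Vert[b_{ij}]\Vert \le \max\{\sup_t\Vert[b_{ij}e_t]\Vert,\ \Vert[b_{ij}+\alpha(A)]\Vert\}$, which comes from the $\ell^\infty$-decomposition $B^{**}=B^{**}p\oplus^\infty B^{**}(1-p)$ over the support projection $p$ of $\alpha(A)$, i.e.\ from the $M$-ideal structure of an approximately unital ideal. That is exactly the content of Lemma \ref{five}: the paper applies it to the canonical morphism of extensions from $E$ to the corona extension $0\to A\to \M(A)\to \co(A)\to 0$ (outer vertical arrows the identity on $A$ and $\tau$), concluding immediately that $\tau$ completely isometric implies $\sigma$ completely isometric. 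Replacing your parenthetical with that single citation closes the gap.
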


\begin{proof}
 The definition of the Busby invariant $\tau$ gives
a morphism of extensions:
$$
\xymatrix{ 0 \ar[r] & A  \ar[r]^\alpha  \ar@{=}[d]  & B
    \ar[r]^\beta \ar[d]^\sigma &
    C \ar[d]^\tau   \ar[r] & 0\\
   0  \ar[r] &   A   \ar[r] &  \M(A)  \ar[r]^\pi &
    \co(A) \ar[r] & 0 \\
    } $$
 By Lemma \ref{five}, if $\tau$ is  completely isometric then so
 is $\sigma$.  Conversely, if  $\sigma$ is  completely isometric,
 then it is easy to see using Lemma \ref{try2}
that so is the map $\tilde{\sigma}$ that
 we used in the definition of the Busby invariant, and hence
 $\tau$ is completely isometric by the formula $\tau =
\tilde{\sigma} \circ \tilde{\beta}^{-1}$ given in that place.
 \end{proof}

\subsection{Subextensions}

A {\em subextension} of an extension $$0 \to {\mathcal
 D} \overset{\alpha}\lra
 {\mathcal E} \overset{\beta}\lra {\mathcal F} \to 0 ,$$
consists of closed subalgebras $A, B, C$ of ${\mathcal
 D},
 {\mathcal E},$ and $ {\mathcal F}$ respectively,
 such that we have an extension
 $$0 \lra A \overset{\alpha_{| A}}\lra B \overset{\beta_{| B}}\lra C
 \lra 0 .$$
Of course this forces $C = \beta(B)$, and $A =
 \alpha^{-1}(B)$.  To see this, note that clearly $A \subset
 \alpha^{-1}(B)$.  On the other hand, the canonical isomorphism
 $B/\alpha(A) \to C$ is the composition of the canonical map $B/(B \cap
 \alpha({\mathcal D})) \to C$, and the canonical map $B/\alpha(A) \to
 B/(B \cap
 \alpha({\mathcal D}))$.  Thus the latter map is one-to-one, which
 implies that $B \cap
 \alpha({\mathcal D}) = \alpha(A)$.  Hence $A = \alpha^{-1}(B)$.

\begin{proposition}  \label{etc}   Given an extension $$E \; \; : \; \;
0 \to {\mathcal
 D} \overset{\alpha}\lra {\mathcal E} \overset{\beta}\lra {\mathcal F} \to 0 ,$$
 and a
 nontrivial subobject  $B$ of
 ${\mathcal E}$, then there exists a subextension of $E$ with `middle term' $B$
 if and only if
 $\alpha({\mathcal
 D}) \cap B$ is approximately unital, and $\beta|_{B}$ is a complete
 quotient map.  If $B$ contains a cai for $\alpha({\mathcal D})$
 then the last condition (that $\beta|_{B}$ is a complete
 quotient map) is automatically true.
 \end{proposition}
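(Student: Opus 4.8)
The plan is to analyze what it takes for the triple $(A, B, C)$ with $A = \alpha^{-1}(B)$ and $C = \beta(B)$ to form an extension, and show the obstruction is exactly the two stated conditions. First I would handle the easy direction: suppose a subextension with middle term $B$ exists. Then by the preamble to the proposition, $A = \alpha^{-1}(B) = B \cap \alpha(\mathcal{D})$ (identifying $\mathcal{D}$ with its image), and this $A$ must be approximately unital since it is the first term of an extension; this is exactly the condition that $\alpha(\mathcal{D}) \cap B$ is approximately unital. Moreover $\beta_{|B} : B \to C$ must be a complete quotient map onto $C = \beta(B)$ by the definition of an extension. So the forward implication is immediate.

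For the converse, assume $\alpha(\mathcal{D}) \cap B$ is approximately unital and $\beta_{|B}$ is a complete quotient map onto $\beta(B)$. Set $A = \alpha^{-1}(B)$, so $\alpha(A) = \alpha(\mathcal{D}) \cap B$ is a closed two-sided ideal in $B$ (it is the intersection of the ideal $\alpha(\mathcal{D})$ with the subalgebra $B$), and it is approximately unital by hypothesis. Set $C = \beta(B)$, a closed subalgebra of $\mathcal{F}$. The sequence $0 \to A \to B \to C \to 0$ is clearly exact at $A$ and at $C$ (surjectivity onto $C = \beta(B)$), and exact at $B$ because $\ker(\beta_{|B}) = B \cap \ker \beta = B \cap \alpha(\mathcal{D}) = \alpha(A)$. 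The map $\alpha_{|A}$ is completely isometric since $\alpha$ is. The only remaining point is that $\beta_{|B}$ is a complete quotient map onto $C$, which is precisely the second hypothesis. Hence we have a subextension with middle term $B$.

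For the final sentence, suppose $B$ contains a cai $(e_t)$ for $\alpha(\mathcal{D})$. Then certainly $\alpha(\mathcal{D}) \cap B$ is approximately unital (it contains this cai, and $(e_t) \subset \alpha(\mathcal{D}) \cap B$ since the $e_t$ lie both in $B$ and in $\overline{\alpha(\mathcal{D})} = \alpha(\mathcal{D})$). To see that $\beta_{|B}$ is automatically a complete quotient map onto $\beta(B)$, I would invoke Lemma~\ref{try}: here $\theta = \beta : \mathcal{E} \to \mathcal{F}$ is a morphism which is a complete quotient map, $\mathrm{Ker}(\beta) = \alpha(\mathcal{D})$ has a cai lying in $B$ (hypothesis~(a) of that lemma), so $\theta_{|B} = \beta_{|B}$ is a complete quotient map onto its (closed) range $\beta(B)$, with $\beta(B) \cong B/\mathrm{Ker}(\beta_{|B})$ completely isometrically.

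**Main obstacle.** The routine verifications (exactness, $\alpha(A)$ being an ideal, $\alpha_{|A}$ isometric) are genuinely routine; the substantive point is the automatic-complete-quotient claim, and there the work has already been done in Lemma~\ref{try}, so the proof reduces to correctly invoking it with hypothesis~(a). The only mild subtlety to get right is bookkeeping around the identification of $\mathcal{D}$ with $\alpha(\mathcal{D}) \subset \mathcal{E}$ and checking that $\alpha(\mathcal{D}) \cap B$ really equals $\alpha(A)$ for $A := \alpha^{-1}(B)$, which is immediate from injectivity of $\alpha$.
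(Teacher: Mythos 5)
Your proposal is correct and follows essentially the same route as the paper: the forward direction is read off from the discussion preceding the proposition, the converse reduces to checking $\mathrm{Ker}(\beta_{|B}) = \alpha(A)$ for $A = \alpha^{-1}(B)$, and the final assertion is exactly an application of Lemma~\ref{try} with hypothesis (a). The only cosmetic difference is that the paper writes $C = \overline{\beta(B)}$ before knowing the range is closed, whereas you note closedness follows from the complete quotient hypothesis; this is harmless.
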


\begin{proof}
One direction is obvious from the discussion above.
For the other, if
$\alpha({\mathcal
 D}) \cap B$ is  approximately unital, let $C = \overline{\beta(B)}$
 and $A = \alpha^{-1}(B) =
 \alpha^{-1}(B \cap
 \alpha({\mathcal D}))$.  If $\beta(b) = 0$ then
 $b = \alpha(d)$ for some $d \in {\mathcal
 D}$.  Clearly, $d \in A$.  Thus Ker$(\beta_{|B}) = \alpha(A)$,
 and so $B$ is an extension of $C$ by $A$ if $\beta|_{B}$ is a
complete
 quotient map.  The last assertion follows from Lemma \ref{try}.
  \end{proof}

 We just saw that the middle algebra
$B$ in a subextension as above determines $A$ and $C$. We now
discuss how $A$ or $C$ determines the others. An approximately
unital closed subalgebra $C$ of ${\mathcal F}$ gives a subextension
$$0 \lra {\mathcal D} \lra
 \beta^{-1}(C) \lra C \lra 0 $$ of the original
extension.  However there will in general be many other
subextensions with last term $C$.
In fact, the reader could check that the one just mentioned is just
the `universal completion' extension discussed in the section on
`Diagram I'.
Indeed, there may be many different subalgebras of ${\mathcal E}$
which give subextensions with first term $A$ and last term $C$.  So
$A$ and $C$ do not determine $B$ uniquely.

 We next
suppose that we are given an approximately unital closed subalgebra
$A$ of ${\mathcal D}$.   By the previous result, the subextensions
with first term $A$ are in bijective correspondence with the
subobjects $B$ of ${\mathcal E}$ such that $B \cap {\mathcal D} =
A$. If one would prefer to characterize subextensions starting with
$A$ in terms of $C$ rather than $B$ then the situation seems much
more complicated.  However, in the case that $A$ contains a cai for
${\mathcal D}$, there is such a characterization of subextensions
which is fairly straightforward. We remark that this `common cai'
condition is probably the most interesting case of subextensions.
For example, very often ${\mathcal D}$ is a $C^*$-algebra generated
by $A$, so that by \cite[Lemma 2.1.7]{BLM} they share a common cai.

\begin{theorem} \label{new}  Given an extension
$$0 \to {\mathcal
 D} \overset{\alpha}\lra {\mathcal E}
 \overset{\beta}\lra {\mathcal F} \to 0 ,$$
and a closed subalgebra $A$ of ${\mathcal D}$ which contains a cai
for ${\mathcal D}$, then the subextensions beginning with $A$ are in
bijective correspondence with the
nontrivial subobjects $C$ of
$$ {\mathcal G} \overset{def}{=} \{ c \in {\mathcal F} : \exists
b \in {\mathcal E} \; \text{with} \; b A + A b \subset A \;
\text{such that} \; \beta(b) = c \} .$$ The middle term in the
ensuing subextension is unique and given by the formula
$$B = \{ b \in \beta^{-1}(C) : b A + A b
\subset A \} .$$ Moreover, the Busby invariant $\tau'$ for the
subextension is related to the Busby invariant $\tau$ for the
original extension by the formula $j(\tau'(c)) = \tau(c)$ for any $c
\in C$, or equivalently $\tau' = j^{-1} \circ \tau_{\vert C}$, where
$j$ is the canonical completely isometric morphism $\co(A) \to
\co({\mathcal D})$ from Corollary {\rm \ref{tilde}}.
\end{theorem}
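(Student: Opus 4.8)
Throughout I would take $\alpha$ to be the inclusion $\mathcal D\subseteq\mathcal E$ (as the excerpt notes, this costs no generality), fix a cai $(e_t)$ for $\mathcal D$ lying in $A$ — it is then automatically a cai for $A$ as well — and introduce the \emph{idealizer} $N=\{b\in\mathcal E:bA+Ab\subseteq A\}$ of $A$ in $\mathcal E$. The goal is to show that the middle terms $B$ of subextensions beginning with $A$ are exactly the sets $\beta^{-1}(C)\cap N$ for closed subalgebras $C\subseteq\mathcal G$, and that $\mathcal G=\beta(N)$. I would begin with the elementary structural facts. First, $N$ is a closed subalgebra of $\mathcal E$ containing $A$: closedness holds because $b_n\to b$ with $b_na\in A$ forces $ba\in\overline A=A$, and symmetrically. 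Second, since $\mathcal D=\ker\beta$ is a two-sided ideal of $\mathcal E$ we have $bA\subseteq\mathcal D$ for \emph{every} $b\in\mathcal E$; consequently $N\cap\mathcal D=A$, because any $d\in\mathcal D$ with $dA\subseteq A$ satisfies $d=\lim_t de_t\in\overline{dA}\subseteq A$, while conversely $A\subseteq N$ and $A\subseteq\mathcal D$. Third, $(e_t)\subseteq A\subseteq N$ is a cai for $\ker(\beta|_N)=\mathcal D$, so Lemma \ref{try} makes $\beta|_N$ a complete quotient map with closed range; hence $\mathcal G=\beta(N)$ is a closed subalgebra of $\mathcal F$.

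Next I would establish the formula for $B$. A subextension beginning with $A$ is precisely a subobject $B$ of $\mathcal E$ with $B\cap\mathcal D=A$ (by the discussion preceding Proposition \ref{etc}; the remaining hypothesis of that proposition, that $\beta|_B$ be a complete quotient map, is automatic here since $B\supseteq A$ contains a cai for $\mathcal D$). For such a $B$ I claim $B=\beta^{-1}(\beta(B))\cap N$, equivalently $B=\{b\in\beta^{-1}(\beta(B)):bA+Ab\subseteq A\}$. The inclusion ``$\subseteq$'' holds because $A\subseteq B$ and $B$ is an algebra give $bA,Ab\subseteq B$, while also $bA,Ab\subseteq\mathcal D$, so $bA,Ab\subseteq B\cap\mathcal D=A$. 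For ``$\supseteq$'', if $b\in N$ and $\beta(b)=\beta(b')$ with $b'\in B$, then $d:=b-b'\in\mathcal D$ satisfies $dA\subseteq bA+b'A\subseteq A$, so $d=\lim_t de_t\in A\subseteq B$, whence $b=b'+d\in B$. In particular $B$ is recovered from $C:=\beta(B)$, and $C\subseteq\mathcal G$ since each element of $C$ has the form $\beta(b)$ with $b\in B\subseteq N$.

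For the reverse assignment, given a nontrivial (and, in {\bf AUOA}, approximately unital) closed subalgebra $C\subseteq\mathcal G$, put $B:=\beta^{-1}(C)\cap N$. Then $B$ is a closed subalgebra containing $A$, with $B\cap\mathcal D=N\cap\mathcal D=A$ because $0\in C$ forces $\mathcal D=\ker\beta\subseteq\beta^{-1}(C)$; moreover $\beta(B)=C$, the inclusion $C\subseteq\beta(B)$ using the definition of $\mathcal G$ directly: any $c\in C\subseteq\mathcal G$ equals $\beta(b)$ for some $b\in N$, and then $b\in\beta^{-1}(C)\cap N=B$. Since $B\supseteq A$ contains a cai for $\mathcal D$ and $B\cap\mathcal D=A$ is approximately unital, Proposition \ref{etc} yields a subextension of $E$ with middle term $B$ and first term $A$, lying in {\bf AUOA} when $C$ does, by Proposition \ref{aau}. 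The assignments $C\mapsto B$ and $B\mapsto\beta(B)$ are then mutually inverse by the previous paragraph, and respect nontriviality (note $B=A$ iff $C=\beta(B)=(0)$), which gives the bijection; the displayed formula for $B$ is the one just used.

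Finally, the Busby invariants. Given $c\in C$ choose $b\in B$ with $\beta(b)=c$. Then $\tau'(c)=\sigma'(b)+A$, where $\sigma'(b)\in\M(A)$ is the multiplier of $A$ implemented by $b$, and $\tau(c)=\sigma(b)+\mathcal D$, where $\sigma(b)\in\M(\mathcal D)$ is the multiplier of $\mathcal D$ implemented by the same $b$. The map $j=\widetilde{\iota}$ of Corollary \ref{tilde} is induced by the proper morphism $\iota:A\hookrightarrow\mathcal D$, and for $m\in\M(A)$ one has $\bar\iota(m)d=\lim_t(me_t)d$ for $d\in\mathcal D$. Taking $m=\sigma'(b)$ gives $\bar\iota(\sigma'(b))d=\lim_t(be_t)d=bd$, so $\bar\iota(\sigma'(b))=\sigma(b)$ in $\M(\mathcal D)$; hence $j(\tau'(c))=\bar\iota(\sigma'(b))+\mathcal D=\sigma(b)+\mathcal D=\tau(c)$, and since $j$ is one-to-one this is equivalent to $\tau'=j^{-1}\circ\tau_{|C}$. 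I expect the only genuine friction to be the recurring appeals to the ``common cai'' hypothesis — to get $N\cap\mathcal D=A$, to force stray elements into $A$, and to know the relevant restrictions of $\beta$ are complete quotient maps — but each of these is a one-line application of the lemmas already in hand, and the Busby-invariant identity is then just unwinding the definitions of $\sigma$ and $\bar\iota$.
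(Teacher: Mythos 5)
Your proposal is correct and follows essentially the same route as the paper's proof: the common cai $(e_t)\subseteq A$ forces any $d\in\mathcal D$ idealizing $A$ into $A$ (giving $N\cap\mathcal D=A$ and the uniqueness of $B=\beta^{-1}(C)\cap N$), Lemma \ref{try} supplies the complete quotient property, and the Busby identity is obtained by computing $\bar\iota(\sigma'(b))d=\lim_t be_td=bd=\sigma(b)d$, exactly as in the paper. The only blemish is the line asserting $\ker(\beta|_N)=\mathcal D$ — in fact $\ker(\beta|_N)=N\cap\mathcal D=A$ — but what Lemma \ref{try} actually requires is a cai for $\ker\beta=\mathcal D$ lying in the subalgebra $N$, which you have, so the conclusion stands.
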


\begin{proof}
Given any subextension with terms $A, B, C$, then $C$ is clearly a
closed   subalgebra of ${\mathcal G}$, and $B \subset \{ b \in
\beta^{-1}(C) : b A + A b \subset A \}$. Conversely, if $b \in
\beta^{-1}(C)$ with $b A + A b \subset A$, then there exists $b_1
\in B$ with $b - b_1 \in {\rm Ker}(\beta)$, so that $b - b_1 \in
{\mathcal D}$.   If $d = b - b_1$ then $d A + A d \subset A$.
Thus
 $d = \lim e_t d \in A$, where $(e_t)$ is the common cai,
 so that $b = d + b_1 \in B$.

 Conversely, given a
subobject $C$ of ${\mathcal G}$, we will show that $A, B, C$
constitute a subextension.  Clearly $B$ is closed. Suppose that $b
\in {\mathcal D}$ satisfies $b A + A b \subset A$. Then $b e_t \in
A$, where $(e_t)$ is a common cai for $A$ and ${\mathcal D}$, so
that $b \in A$.  This shows that $A, B, C$ constitutes an exact
sequence, and by Lemma \ref{try}
 we have a subextension.

To see the last assertion, let $c \in C, b \in B, \beta(b) = c$.
Then $\tau'(c) = \sigma'(b) + A, \tau(c) = \sigma(b) + {\mathcal D}$,
 and so the result boils down to showing that the
 canonical embedding $\iota: \M(A) \subset \M({\mathcal D})$ takes
 $\sigma'(b)$ to $\sigma(b)$.  Here $\sigma, \sigma'$ are
 the canonical maps from $B, {\mathcal E}$ into
 $\M(A), \M({\mathcal D})$ respectively.
 However for $b \in B, d \in {\mathcal D}$,
 $$\iota(\sigma'(b))(d) = \lim_t \, \sigma'(b)(e_t) d
 = \lim_t \, b e_t d = b d,$$
 and $\sigma(b)(d) = bd$.
 \end{proof}

{\bf Remarks.}  1)\ The set ${\mathcal G}$ above is a subalgebra of
${\mathcal F}$.  If we are working in the category {\bf OA} then
clearly there is a largest subextension with first term $A$, namely
$$0 \to A \overset{\alpha}\lra \{ b \in {\mathcal E} : b A + A b
\subset A \}
 \overset{\beta}\lra {\mathcal G} \to 0 .$$
 If we are working in the category {\bf AUOA} then
the same is true if ${\mathcal G}$ above has a cai, which happens
for example if ${\mathcal E}$ is unital.

2)\ If $A = {\mathcal D}$ then the above all follows from `Diagram
I'.

\bigskip

If $A$ is a $C^*$-algebra, and if we have an extension
$$0 \to A \overset{\alpha}{\lra} B \overset{\beta}{\lra} C \to 0 ,$$
then
there is a canonical $C^*$-algebra subextension.  Note that by
2.1.2 in \cite{BLM} the diagonal $\Delta(B) = B \cap B^*$ contains
$\alpha(A)$. If the diagonal $\Delta(C) = C \cap C^*$  is a
nontrivial subobject of $C$, then by Theorem \ref{new} we get a
subextension with first term $A$, last term $\Delta(C)$, and middle
term $\beta^{-1}(C)$.  The latter contains $\Delta(B)$ clearly (by
2.1.2 in \cite{BLM} again), and hence equals $\Delta(B)$ by the
remark after Proposition \ref{aau}.  Thus we have a $C^*$-algebra
subextension
$$0 \to A \overset{\alpha}{\lra} \Delta(B)
\overset{\beta_{\vert \Delta(B)}}{\lra} \Delta(C)  \to 0 .$$

\medskip

{\bf Examples.} 1. \  Every extension of the upper triangular matrix
algebra $T_n$ by $\Kdb$ is split. To see this, suppose that we have
an extension
$$0 \to A \overset{\alpha}{\lra} B \overset{\beta}{\lra} T_n \to 0 ,$$
where $A$ is a $C^*$-algebra.  It follows that $\alpha(A) \subset
\Delta(B) = B \cap B^*$.  By the remark after Theorem \ref{new} we
get a subextension
$$0 \to A \overset{\alpha}{\lra} \Delta(B) \overset{\beta}{\lra} D_n \to 0,$$
where  $D_n = \Delta(T_n)$.  If $A = \Kdb$, then this is just an
extension of $\ell^\infty_n$ by $\Kdb$, and we can lift the $n$
minimal projections in $D_n$ to $n$ mutually orthogonal projections
$p_i$ in $\Delta(B)$ (see e.g.\ \cite{BDF}).  Pick contractions $T_i
\in B$ with $\beta(T_i) = e_{i,i+1}$.  By replacing $T_i$ by $p_i
T_i p_{i+1}$, we may assume that $T_i = p_i T_i p_{i+1}$.  Let
$b_{ij} = T_i T_{i+1} \cdots T_{j-1}$, for $i < j$, $b_{ii} = p_i$.
The map $[\alpha_{ij}] \in T_n \mapsto \sum_{ij} \, \alpha_{ij}
b_{ij} \in B$ is a completely contractive homomorphism, by a result
of McAsey and Muhly (see \cite{RONC}), and is clearly a splitting
for the extension.

\medskip

2)\  Every unital extension of the disk algebra, or more generally
of Popescu's noncommutative disk algebra $A_n$
 (see \cite{Pop}), has a strongly unital splitting.  This follows
 by the associated noncommutative von Neumann
inequality.  For example,
unital morphisms on the disk algebra are in bijective
correspondence with contractions in the algebra that the
morphism maps into; and if the latter algebra is a
quotient algebra then we can lift the contraction and hence
 can lift the morphism too.  By a similar argument,
every nonunital extension of $A_n$ by $\Kdb$ splits.  For the
bidisk algebra $A(\Ddb^2)$, we can follow the obvious argument
for the disk algebra to see that the splitting of unital extensions
of $A(\Ddb^2)$ by the compacts say,
amounts to lifting commuting pairs of contractions in $\Bdb/\Kdb$
to commuting pairs of contractions in $\Bdb$, and
Ando's theorem for such pairs (see e.g.\ 2.4.13 in \cite{BLM}).
It is known that some such pairs do lift, while others
do not (see e.g.\ \cite{BO}), and so 
there are quite nontrivial extensions in this case.
In the sequel paper we will see that $Ext$ in this simple case
already brings up interesting operator theoretic topics.
However for the tridisk algebra $A(\Ddb^3)$,
and for algebras of analytic functions on
other classical domains, the argument above based on von Neumann
inequalities fails, although it is clear that one will
usually get non-split unital extensions.

\medskip

We will not treat the subject of {\em corona extendibility}
\cite{ELP} here, but will give a very simple, but very common,
example of it. Namely, given an extension $$0 \to A \lra B \lra C
\to 0$$ in the category {\bf OA}, suppose that $C$ is nonunital, and
$C^1$ is the Meyer unitization of $C$.  By Meyer's theorem
\cite{Mey,BLM}, the Busby invariant $\tau$ extends to a unital
morphism
 $\tau^1 : C^1 \to \co(A)$.     We leave it as an exercise that
 this gives the `superextension':
$$ 0 \lra A \lra B^1 \lra C^1 \lra 0 .$$
    We call this the {\em unitization extension}.
    The original extension is a subextension of this one.
 Conversely, any unital extension of $C^1$ by $A$ is
    the unitization extension of an extension of $C$ by $A$.  We
    leave this as an exercise in diagram chasing, using
    Theorem \ref{bus} and Meyer's theorem.

\section{Covering extensions}  In this section we start with an
extension $$E \, : \; \; \; 0 \lra A\overset{\alpha}\lra
B\overset{\beta}\lra C \lra 0 ,$$ and construct another extension
containing $E$ as a subextension.
Note that given any operator algebra $A'$ containing $A$ with a
common cai, by Diagram III we obtain a smallest (or universal)
`superextension' with first term $A'$, namely
$$
\xymatrix{
    0 \ar[r] & A   \ar[r]^{\alpha}  \ar@{^{(}->}[d] &  B
    \ar[d]  \ar[r]^{\beta} &
    C \ar[r]  \ar@{=}[d] & 0 \\
 0  \ar[r] &     A'  \ar[r]  & B'
    \ar[r]  &
    C \ar[r] & 0\\
    } $$
By Lemma \ref{five}, the middle arrow is a complete isometry, so
that the new extension contains $E$ as a subextension.

We are, however, more interested in superextensions consisting of
$C^*$-algebras. If each of these $C^*$-algebras is a $C^*$-cover of
the operator algebra in the matching place in the sequence $E$, then
we call the $C^*$-algebra extension a {\em covering extension} of
$E$. We now discuss these. For simplicity of exposition we will
occasionally assume that $A \subset B$ and $C = B/A$.

If we have a covering extension
$$
\xymatrix{
    0 \ar[r] & A   \ar[r] \ar[d]_\mu &  B
    \ar[d]^j  \ar[r] &
    C \ar[r]  \ar[d]^\nu & 0 \\
 0  \ar[r] &   {\mathcal D}    \ar[r]  & {\mathcal E}
    \ar[r]  &
    {\mathcal F} \ar[r] & 0\\
    } $$
where $\mu, j, \nu$ are the canonical maps from $A, B, C$
respectively into the given $C^*$-covers, then the Busby invariant
$\tau^*$ of the covering extension is related to the Busby invariant
$\tau$ of the original extension by the formula
\begin{equation} \label{ones}  \tilde{\mu} \circ
\tau = \tau^* \circ \nu = \tau^*_{\vert C},\end{equation}
  by
Corollary \ref{bumo}, where we are viewing $C \subset {\mathcal F}$
via $\nu$. Since $\tilde{\mu}$ is completely isometric by Corollary
\ref{tilde}, we have \begin{equation} \label{one} \tau =
\tilde{\mu}^{-1} \circ (\tau^*)_{\vert C}.\end{equation}

\medskip

{\bf Remark.}  An extension is completely essential
iff there is an essential covering extension with first two terms
$C^*$-envelopes, and iff there exists
some covering extension which is essential.  This follows from
Proposition  \ref{cesid} (and the proof of Lemma \ref{cenvi}).

\medskip

 As was the case for subextensions (see Proposition \ref{etc}), we have:

\begin{proposition}  \label{etc2}   Given an approximately unital
ideal in an
operator algebra $B$,  the equivalence classes (with respect to
strong isomorphism) of covering extensions of an extension
$$0 \to A
  \lra B \lra C \to 0 ,$$
 are in an  bijective
correspondence with the equivalence classes of $C^*$-covers
$({\mathcal E},j)$ of $B$.
 \end{proposition}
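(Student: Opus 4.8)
The plan is to set up the correspondence in both directions and check they are mutually inverse. In one direction, suppose we are given a covering extension of $0 \to A \to B \to C \to 0$, say
$$
\xymatrix{
    0 \ar[r] & A   \ar[r] \ar[d]_\mu &  B
    \ar[d]^j  \ar[r] &
    C \ar[r]  \ar[d]^\nu & 0 \\
 0  \ar[r] &   {\mathcal D}    \ar[r]  & {\mathcal E}
    \ar[r]  &
    {\mathcal F} \ar[r] & 0 \, .\\
    }
$$
By the definition of a covering extension, $({\mathcal E},j)$ is a $C^*$-cover of $B$, and this is the $C^*$-cover we associate to the given covering extension. First I would check that strongly isomorphic covering extensions give equivalent $C^*$-covers, and conversely: a strong isomorphism of covering extensions is a surjective completely isometric $*$-isomorphism of the middle $C^*$-algebras commuting with the maps $j, j'$, which is precisely an equivalence of the $C^*$-covers $({\mathcal E},j)$ and $({\mathcal E}',j')$; and the outer vertical arrows are then forced (the map of $C^*$-algebras generated by $A$ restricts appropriately, and the quotient map is determined).

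For the reverse direction, start with a $C^*$-cover $({\mathcal E},j)$ of $B$. Put ${\mathcal D}$ equal to the $C^*$-subalgebra of ${\mathcal E}$ generated by $j(A)$, and ${\mathcal F} = {\mathcal E}/{\mathcal D}$. By Lemma \ref{id}, ${\mathcal D}$ is a (closed, two-sided) ideal in ${\mathcal E}$, since $A$ is an approximately unital ideal in $B$; moreover $A$ and ${\mathcal D}$ share a common cai (by \cite[Lemma 2.1.7]{BLM}, a cai $(e_t)$ of $A$ is a cai for ${\mathcal D}$). Thus $({\mathcal D}, j_{|A})$ is a $C^*$-cover of $A$, and we obtain a $C^*$-algebra extension $0 \to {\mathcal D} \to {\mathcal E} \to {\mathcal F} \to 0$. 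By Lemma \ref{try2} (applied with $B \subset {\mathcal E}$, the ideals being ${\mathcal D}$ and $A$ with common cai), the induced map $C = B/A \to {\mathcal E}/{\mathcal D} = {\mathcal F}$ is completely isometric; and since $j(B)$ generates ${\mathcal E}$, its image generates ${\mathcal F}$, so $({\mathcal F}, \nu)$ is a $C^*$-cover of $C$, where $\nu$ is this induced map. This produces a covering extension, and by Lemma \ref{five} the middle arrow $j$ is a complete isometry (so the original $E$ is indeed a subextension), as required.

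Finally I would verify that these two assignments are inverse to each other up to the respective equivalences. Starting from a covering extension and passing to $({\mathcal E},j)$ and back: the $C^*$-subalgebra of ${\mathcal E}$ generated by $j(A)$ is ${\mathcal D}$ (since $({\mathcal D}, j_{|A})$ is already a $C^*$-cover of $A$ sitting inside ${\mathcal E}$ — here one uses that a $C^*$-cover of $A$ inside ${\mathcal E}$ generated by $j(A)$ is literally the $C^*$-algebra generated by $j(A)$), and likewise ${\mathcal F} = {\mathcal E}/{\mathcal D}$ with the induced quotient map; so we recover the same covering extension up to strong isomorphism. Starting from a $C^*$-cover and going the other way returns the same cover on the nose. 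The main obstacle is the bookkeeping in this last step: one must be careful that in a given covering extension the first term ${\mathcal D}$ really is \emph{generated} by the image of $A$ — but this is part of the definition of ``covering extension'' together with the fact that a $C^*$-cover of $A$ is by definition generated by the copy of $A$, so no essential difficulty arises; everything reduces to Lemmas \ref{id}, \ref{try2}, and \ref{five}.
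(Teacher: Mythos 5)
Your proposal is correct and follows essentially the same route as the paper's proof: build the covering extension from a $C^*$-cover $({\mathcal E},j)$ by setting ${\mathcal D}$ equal to the $C^*$-subalgebra generated by $j(A)$ and ${\mathcal F}={\mathcal E}/{\mathcal D}$, using Lemma \ref{id} for the ideal property and Lemma \ref{try2} for the complete isometry $C\to{\mathcal F}$. The paper's proof only records this construction and leaves the inverse direction and the equivalence bookkeeping implicit, which you spell out; your appeal to Lemma \ref{five} at the end is superfluous (the middle arrow is completely isometric by definition of a $C^*$-cover) but harmless.
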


\begin{proof}  Note that such ${\mathcal E}$ gives a covering
extension: set ${\mathcal D} = C^*_{{\mathcal E}}(j(A))$, and set
${\mathcal F} = {\mathcal E}/{\mathcal D}$.   By Lemma \ref{id},
${\mathcal D}$ is a two-sided ideal in ${\mathcal E}$. There  is a
canonical completely isometric homomorphism $\nu : C = B/A \to F$
(see Lemma \ref{try2}). It is easy to see that $({\mathcal F},\nu)$
is a $C^*$-cover of $C$. With $\mu = j_{\vert A} : A \to {\mathcal
D}$ we have a commutative diagram
$$
\xymatrix{
    0 \ar[r] & A   \ar[r] \ar[d]_\mu &  B
    \ar[d]^j  \ar[r] &
    C \ar[r]  \ar[d]^\nu & 0 \\
 0  \ar[r] &   {\mathcal D}    \ar[r]  & {\mathcal E}
    \ar[r]  &
    {\mathcal F} \ar[r] & 0\\
    } $$
 with all vertical arrows completely isometric homomorphisms.
 \end{proof}

{\bf Remark.}  An interesting consequence of this, is that it
follows that the equivalence classes of covering extensions of a
given extension are in a bijective order reversing correspondence
with the open sets in a certain topology, by p.\ 99 of \cite{BLM}.

\begin{lemma}  \label{enf} In the definition above of a
covering extension, it is not necessary to assume that ${\mathcal
E}$ is a $C^*$-cover of $B$.  This is automatically implied by the
other hypotheses.
\end{lemma}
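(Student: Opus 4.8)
The statement to prove is Lemma \ref{enf}: given a commutative diagram of extensions
$$
\xymatrix{
    0 \ar[r] & A   \ar[r] \ar[d]_\mu &  B
    \ar[d]^j  \ar[r] &
    C \ar[r]  \ar[d]^\nu & 0 \\
 0  \ar[r] &   {\mathcal D}    \ar[r]  & {\mathcal E}
    \ar[r]  &
    {\mathcal F} \ar[r] & 0\\
    }
$$
where the bottom row is an extension of $C^*$-algebras, $\mu, j, \nu$ are completely isometric homomorphisms, ${\mathcal D}$ is a $C^*$-cover of $A$, ${\mathcal F}$ is a $C^*$-cover of $C$, and the vertical maps induce the covering structure --- then ${\mathcal E}$ is automatically a $C^*$-cover of $B$, i.e., the $C^*$-subalgebra ${\mathcal E}_0$ of ${\mathcal E}$ generated by $j(B)$ equals all of ${\mathcal E}$.

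**The plan.** The plan is to identify $A \subset B \subset {\mathcal E}$ (suppressing $\mu, j$) and to show that the $C^*$-subalgebra ${\mathcal E}_0 = C^*_{\mathcal E}(B)$ contains enough of ${\mathcal E}$. First I would observe that since ${\mathcal D} = C^*_{\mathcal E}(A)$ (because ${\mathcal D}$ is a $C^*$-cover of $A$ and $A \subset {\mathcal E}$, so the $C^*$-subalgebra generated by $A$ inside ${\mathcal D}$ is ${\mathcal D}$ itself, hence equals $C^*_{\mathcal E}(A)$), we immediately get ${\mathcal D} \subset {\mathcal E}_0$. Thus ${\mathcal E}_0$ is a $C^*$-subalgebra of ${\mathcal E}$ containing the ideal ${\mathcal D}$, and (by Lemma \ref{id}) ${\mathcal D}$ is in fact an ideal of ${\mathcal E}_0$. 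Consider the quotient ${\mathcal E}_0 / {\mathcal D} \subset {\mathcal E}/{\mathcal D} = {\mathcal F}$; this inclusion makes sense since ${\mathcal D} \subset {\mathcal E}_0$, and ${\mathcal E}_0/{\mathcal D}$ is the image of the $*$-homomorphism ${\mathcal E}_0 \to {\mathcal F}$. The image of $B$ under ${\mathcal E} \to {\mathcal F}$ is (a copy of) $C$, via $\nu$, so the image of $B$ lies in ${\mathcal E}_0/{\mathcal D}$, and hence ${\mathcal E}_0/{\mathcal D} \supseteq C^*_{\mathcal F}(\nu(C)) = {\mathcal F}$, using that ${\mathcal F}$ is a $C^*$-cover of $C$. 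Therefore ${\mathcal E}_0$ surjects onto ${\mathcal F}$ and contains the kernel ${\mathcal D}$ of the quotient map ${\mathcal E} \to {\mathcal F}$, which forces ${\mathcal E}_0 = {\mathcal E}$ by a standard short-exact-sequence argument: if $x \in {\mathcal E}$, pick $x_0 \in {\mathcal E}_0$ with the same image in ${\mathcal F}$, then $x - x_0 \in {\mathcal D} \subset {\mathcal E}_0$, so $x \in {\mathcal E}_0$.

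**The main obstacle.** The one place requiring care is the claim that the image of $j(B)$ under the quotient ${\mathcal E} \to {\mathcal F}$ really is (a completely isometric copy of) $C$ sitting inside ${\mathcal F}$ as a generating subspace --- that is, the compatibility encoded in the commuting diagram genuinely says that the induced map $C = B/A \to {\mathcal F} = {\mathcal E}/{\mathcal D}$ is $\nu$, which is part of the covering hypothesis. One must also check that $\nu(C)$ generates ${\mathcal F}$ as a $C^*$-algebra, which is exactly the hypothesis that $({\mathcal F}, \nu)$ is a $C^*$-cover of $C$. Beyond extracting these facts precisely from the hypotheses, the argument is purely the standard "$5$-lemma-style" manipulation with short exact sequences of $C^*$-algebras, and there is no real analytic difficulty --- the only subtlety being to make sure Lemma \ref{id} applies so that ${\mathcal D}$ is an honest two-sided ideal inside ${\mathcal E}_0$ (which it is, being approximately unital as a $C^*$-cover of the approximately unital algebra $A$, hence it contains a cai; Lemma \ref{id} then shows $C^*_{\mathcal E}(A) = {\mathcal D}$ is an ideal in $C^*_{\mathcal E}(B) = {\mathcal E}_0$). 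So I expect the entire proof to be three or four lines.
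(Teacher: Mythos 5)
Your argument is correct and is essentially the paper's own proof: both show that the $C^*$-subalgebra of ${\mathcal E}$ generated by $j(B)$ contains ${\mathcal D}$ (since ${\mathcal D}$ is generated by $\mu(A)$) and maps onto ${\mathcal F}$ (since ${\mathcal F}$ is generated by $\nu(C)$), and then conclude by the standard short-exact-sequence chase (the paper phrases this last step as an appeal to the five lemma). Your side remark invoking Lemma \ref{id} is harmless but unnecessary, since ${\mathcal D}$ is already an ideal of ${\mathcal E}$ and hence of any intermediate $C^*$-subalgebra containing it.
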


\begin{proof}
Let ${\mathcal G}$ be the $C^*$-subalgebra of
${\mathcal E}$ generated by $B$.  The image (resp.\ inverse image)
of $G$ in ${\mathcal F}$ (resp.\ ${\mathcal D}$) is a $C^*$-algebra,
and hence must equal ${\mathcal F}$ (resp.\ ${\mathcal D}$) since
the latter is generated by $C$  (resp.\ $A$). By the five lemma
 the inclusion map ${\mathcal G} \to {\mathcal E}$ is
surjective.  That is, ${\mathcal G} = {\mathcal E}$.
\end{proof}

\begin{lemma}  \label{en}  Given an extension
$$0 \lra A \lra B \lra C \lra 0$$
as above, consider the two canonical maps from the proof of
Proposition {\rm \ref{etc2}}, from the set of $C^*$-covers of $B$ to
the set of $C^*$-covers of $A$, and to the set of $C^*$-covers of
$C$. These two maps
preserve the natural ordering of $C^*$-covers. The first of these
maps is surjective.
\end{lemma}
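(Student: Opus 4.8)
Given an extension $0 \to A \to B \to C \to 0$ of approximately unital operator algebras (with $A$ an approximately unital ideal in $B$), the two maps from $C^*$-covers of $B$ to $C^*$-covers of $A$ (namely $({\mathcal E},j) \mapsto (C^*_{\mathcal E}(j(A)), j_{|A})$) and to $C^*$-covers of $C$ (namely $({\mathcal E},j) \mapsto ({\mathcal E}/C^*_{\mathcal E}(j(A)), \nu)$) preserve the ordering, and the first is surjective.

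**Plan.** The plan is to verify the order-preservation directly from the definitions, and then to establish surjectivity of the first map by an explicit construction. For order-preservation: suppose $({\mathcal E}_1, j_1) \geq ({\mathcal E}_2, j_2)$ as $C^*$-covers of $B$, meaning there is a surjective $*$-homomorphism $\pi : {\mathcal E}_1 \to {\mathcal E}_2$ with $\pi \circ j_1 = j_2$. First I would note that $\pi$ carries $C^*_{{\mathcal E}_1}(j_1(A))$ onto $C^*_{{\mathcal E}_2}(j_2(A))$, since $\pi$ is a $*$-homomorphism sending the generating set $j_1(A)$ onto the generating set $j_2(A)$; so the restriction of $\pi$ witnesses $(C^*_{{\mathcal E}_1}(j_1(A)), j_{1|A}) \geq (C^*_{{\mathcal E}_2}(j_2(A)), j_{2|A})$, giving the claim for the first map. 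For the second map: $\pi$ maps the ideal ${\mathcal D}_1 := C^*_{{\mathcal E}_1}(j_1(A))$ onto ${\mathcal D}_2 := C^*_{{\mathcal E}_2}(j_2(A))$, hence induces a surjective $*$-homomorphism $\bar\pi : {\mathcal E}_1/{\mathcal D}_1 \to {\mathcal E}_2/{\mathcal D}_2$; one then checks using the definition of $\nu$ in the proof of Proposition \ref{etc2} (which is induced from $B \to {\mathcal E}_i \to {\mathcal E}_i/{\mathcal D}_i$ via Lemma \ref{try2}) that $\bar\pi \circ \nu_1 = \nu_2$, so $\bar\pi$ witnesses the ordering for the second map.

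**Surjectivity of the first map.** Given an arbitrary $C^*$-cover $({\mathcal D}', \mu)$ of $A$, I need to produce a $C^*$-cover $({\mathcal E}, j)$ of $B$ whose associated cover of $A$ is (equivalent to) $({\mathcal D}', \mu)$. The natural candidate: since $A$ is an approximately unital ideal in $B$, the cover $\mu : A \to {\mathcal D}'$ should extend to a morphism $B \to \M({\mathcal D}')$ via the universal property of the multiplier algebra — concretely, for $b \in B$ one defines a multiplier of ${\mathcal D}'$ by $\mu(a) \mapsto \mu(ba)$ on the (dense) set $\mu(A) \subset {\mathcal D}'$, and checks this is well-defined and bounded because $A$ has a cai and $\mu$ is completely isometric (using the cai of $A$ in the style of the argument in Lemma \ref{id} / the proof of (2.23) in \cite{BLM}). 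This gives a completely contractive homomorphism $\hat\mu : B \to \M({\mathcal D}')$; one shows $\hat\mu$ is completely isometric (since $\hat\mu_{|A} = \mu$ is completely isometric and $A$ has a common cai with $B$ — indeed $\|[\hat\mu(b_{ij})]\| = \sup_t \|[\hat\mu(b_{ij}) \mu(e_t)]\| = \sup_t \|[\mu(b_{ij} e_t)]\| = \|[b_{ij}]\|$). Then let ${\mathcal E} := C^*_{\M({\mathcal D}')}(\hat\mu(B))$ and $j := \hat\mu$. Since ${\mathcal D}' = C^*(\mu(A))$ sits inside ${\mathcal E}$ and is an ideal there by Lemma \ref{id}, the cover of $A$ associated to $({\mathcal E}, j)$ is exactly $C^*_{\mathcal E}(j(A)) = C^*(\mu(A)) = {\mathcal D}'$ with the map $\mu$, as desired.

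**Main obstacle.** The routine order-preservation is easy bookkeeping; the real content is the surjectivity argument, and within it the delicate point is constructing and controlling the extension $\hat\mu : B \to \M({\mathcal D}')$ — specifically, verifying it is well-defined, a homomorphism, completely contractive, and completely isometric. This is essentially the content behind the canonical map $\sigma : B \to \M(A)$ (\S1) combined with functoriality of $\M(\cdot)$ under the proper morphism $\mu : A \to {\mathcal D}'$ (which is proper since $\mu$ carries a cai of $A$ to a cai of ${\mathcal D}' = C^*(\mu(A))$, by \cite[Lemma 2.1.7]{BLM}); so in fact one can short-cut by composing $\sigma : B \to \M(A)$ with $\bar\mu : \M(A) \to \M({\mathcal D}')$. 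The one thing to be careful about is checking complete isometry of the composite on $B$, which again reduces to the cai computation above, and noting that $C^*_{\mathcal E}(j(A))$ genuinely recovers ${\mathcal D}'$ rather than a proper quotient — this holds because $\bar\mu_{|A} = \mu$ and the generated $C^*$-algebra is computed inside the faithful ambient algebra $\M({\mathcal D}') \supset {\mathcal D}'$.
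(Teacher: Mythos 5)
Your order-preservation argument is fine; it is exactly the ``exercise in diagram chasing'' the paper leaves to the reader, and the check that $\bar\pi\circ\nu_1=\nu_2$ is routine since $\nu_i(\beta(b))=j_i(b)+{\mathcal D}_i$.

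The surjectivity argument, however, has a genuine gap at the complete isometry claim for $\hat\mu=\bar\mu\circ\sigma:B\to\M({\mathcal D}')$. Your chain of equalities breaks at the last step: $\sup_t\Vert[b_{ij}e_t]\Vert=\Vert[b_{ij}]\Vert$ is \emph{not} automatic. It is precisely the assertion that $\sigma:B\to\M(A)$ is completely isometric, i.e.\ that $A$ is a \emph{completely essential} ideal in $B$ (Proposition \ref{cesid}; note that the displayed equation in its proof is derived there only under hypothesis (iii), not in general). Lemma \ref{en} is stated for an arbitrary extension, and for the trivial extension $B=A\oplus^\infty C$ one has $\sigma(0,c)=0$ and $(0,c)e_t=0$ for all $t$, so $\hat\mu$ is not even injective and $({\mathcal E},\hat\mu)$ fails to be a $C^*$-cover of $B$. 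So your construction only produces a preimage when the extension is completely essential.

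The paper circumvents this by never trying to embed $B$ into $\M({\mathcal D}')$ alone. Instead it extends $\tilde\mu\circ\tau:C\to\co({\mathcal D}')$ (which is just a completely contractive homomorphism, not required to be isometric) to a $*$-homomorphism $\tau^*:C^*_{\rm max}(C)\to\co({\mathcal D}')$, takes the extension of $C^*_{\rm max}(C)$ by ${\mathcal D}'$ with Busby invariant $\tau^*$, and invokes Theorem \ref{bumo} to get a morphism of extensions from $E$ into it. The middle term there is the pullback $\M({\mathcal D}')\oplus_{\co({\mathcal D}')}C^*_{\rm max}(C)$, which retains a $C$-coordinate; consequently the middle vertical arrow is completely isometric by Lemma \ref{five} simply because the two outer arrows are, with no essentiality needed, and Lemma \ref{enf} shows the result is a covering extension with first term ${\mathcal D}'$. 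If you want to keep your concrete style, the repair is to replace your target $\M({\mathcal D}')$ by this pullback, i.e.\ send $b\mapsto(\hat\mu(b),\nu(\beta(b)))$ and argue as in Lemma \ref{five}.
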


\begin{proof}
We leave the first assertion as an exercise in diagram chasing.  To
see the second, note that for any $C^*$-cover ${\mathcal D}$ of $A$,
we have $\co(A) \subset \co({\mathcal D})$ completely isometrically
via the map $\tilde{\mu}$ above, by Corollary \ref{tilde}. The map
$\tilde{\mu} \circ \tau$ on $C$ extends to a $*$-homomorphism
$\tau^* : C^*_{\rm max}(C) \to \co({\mathcal D})$, which is the
Busby invariant for an extension of $C^*_{\rm max}(C)$ by ${\mathcal
D}$. Since $\tilde{\mu} \circ \tau = \tau^*_{\vert C}$, we see by
Theorem \ref{bumo} that we have a morphism of extensions, the middle
vertical arrow being completely isometric by Lemma \ref{five}. By
Lemma \ref{enf} we have constructed a covering extension with first
term ${\mathcal D}$. \end{proof}

{\bf Remarks.}  Neither of the two maps in the last lemma are
one-to-one in general.  The second map need not be onto, even if the
extension is completely essential.
 An example showing this may easily be constructed in the
case that $A = \Kdb$ and $C$ is the upper triangular $2 \times 2$
matrices. In this case one may easily construct (as in the next
result) an
 extension with Busby invariant $\tau : C \to \Bdb/\Kdb$, such that
$\tau$ has no extension to a $*$-homomorphism from $C^*_{\rm e}(A)$
into $\Bdb/\Kdb$.  We can even ensure that the extension is
completely essential and trivial.  For example, choose in $\Bdb$ a
projection $p$, and a partial isometry $u$ with $u = p u p^{\perp}$,
but $u u^* - p \notin \Kdb$ and $u^* u - p^{\perp} \notin \Kdb$.
Let $\dot{p}, \dot{u}$ be the corresponding elements of $\Bdb/\Kdb$.
Then it is easy to see that using e.g.\ \cite[Corollary
2.2.12]{BLM}, that the
map $$\tau : C \to \Bdb/\Kdb : \left[ \begin{array}{ccl} a & b \\
0 & c \end{array} \right] \mapsto a \dot{p} + b \dot{u} + c (1-
\dot{p}) ,$$ corresponds to a completely essential trivial
extension, but does not extend to a $*$-homomorphism on $M_n$. Hence
there is no covering extension of this extension with third term
$C^*_{\rm e}(A)$. Note this is also an example in which $C^*_{\rm
e}(B)/C^*_{\rm e}(A)$ is not isomorphic to $C^*_{\rm e}(B/A)$.

\begin{proposition}  \label{etc3}   Given an extension
$$E : 0 \to A
  \lra B \lra C \to 0 ,$$ as above, and
$C^*$-covers $({\mathcal D},\mu)$ and $({\mathcal F},\nu)$ of $A$
and $C$ respectively, then up to strong isomorphism there exists at
most one covering extension of $E$ with first and third terms
${\mathcal D}$ and ${\mathcal F}$.  In fact there will exist one of
these if and only if the canonical map $\tilde{\mu} \circ \tau \circ
\nu^{-1} : \nu(C) \to \co({\mathcal D})$ extends to a
$*$-homomorphism
 ${\mathcal F} \to \co({\mathcal D})$.
 \end{proposition}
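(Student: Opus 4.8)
The plan is to reduce everything to the Busby-invariant picture and the functoriality of $\mathbf{Ext}$ developed earlier, in particular Theorems \ref{bumo} and \ref{bus} and Corollary \ref{tilde}.

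First I would address uniqueness. Suppose we have two covering extensions of $E$, with first terms both $({\mathcal D},\mu)$ and third terms both $({\mathcal F},\nu)$ (up to the identifications given by those $C^*$-cover maps). Applying equation \eqref{ones} (i.e.\ Corollary \ref{bumo}) in each case, the Busby invariant $\tau^*$ of such a covering extension satisfies $\tilde{\mu}\circ\tau = \tau^*\circ\nu$, so $\tau^*$ is forced to equal $\tilde{\mu}\circ\tau\circ\nu^{-1}$ on $\nu(C)$, and since $\nu(C)$ generates ${\mathcal F}$ as a $C^*$-algebra and $\tau^*$ is a $*$-homomorphism, $\tau^*$ is uniquely determined on all of ${\mathcal F}$. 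By Theorem \ref{bus} (the $C^*$-algebra version, which is standard) the covering extension is then determined up to strong isomorphism. This gives the ``at most one'' assertion.

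Next, existence. The condition is exactly what is needed to produce the Busby invariant of the putative covering extension. If the map $\tilde{\mu}\circ\tau\circ\nu^{-1} : \nu(C)\to \co({\mathcal D})$ extends to a $*$-homomorphism $\tau^* : {\mathcal F}\to\co({\mathcal D})$, then $\tau^*$ is the Busby invariant of a $C^*$-algebra extension $0\to {\mathcal D}\to {\mathcal E}\to {\mathcal F}\to 0$ (via the pullback construction, Theorem \ref{bus} again). Since $\tilde{\mu}\circ\tau = \tau^*\circ\nu$ by construction, Theorem \ref{bumo} supplies a morphism from $E$ into this extension, with vertical arrows $\mu$, some $j : B\to{\mathcal E}$, and $\nu$; the middle arrow $j$ is a complete isometry by Lemma \ref{five}. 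Finally, Lemma \ref{enf} shows that ${\mathcal E}$ is automatically a $C^*$-cover of $B$ (its outer terms being $C^*$-covers of $A$ and $C$ already force this), so what we have produced is genuinely a covering extension of $E$ with first and third terms ${\mathcal D}$ and ${\mathcal F}$. Conversely, if a covering extension with these terms exists, its Busby invariant $\tau^*$ restricts on $\nu(C)$ to $\tilde{\mu}\circ\tau\circ\nu^{-1}$ by \eqref{ones}, and $\tau^*$ is the desired $*$-homomorphic extension.

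The main obstacle, if any, is bookkeeping rather than a genuine mathematical difficulty: one must be careful that $\tilde{\mu}$ is well-defined and completely isometric (which is precisely Corollary \ref{tilde}, using that $A$ and ${\mathcal D}$ share a cai since ${\mathcal D}$ is a $C^*$-cover of the approximately unital $A$), and that the morphism $\tilde{\mu}\circ\tau\circ\nu^{-1}$ is literally a homomorphism defined on the copy $\nu(C)\subset{\mathcal F}$, so that asking it to extend to ${\mathcal F}$ makes sense. One should also note that $\mu$ is a proper morphism — it takes a cai of $A$ to a cai of the $C^*$-cover ${\mathcal D}$ — so that $\tilde\mu$ exists and Theorem \ref{bumo} applies; this is where the approximate-unitality hypothesis on $A$ is used. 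Everything else is a direct appeal to the machinery already assembled in Sections 2 and 3.
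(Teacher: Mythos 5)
Your proposal is correct and follows essentially the same route as the paper: both directions rest on Equation (\ref{ones}), the pullback/Busby correspondence, Theorem \ref{bumo} with Lemmas \ref{five} and \ref{enf} for existence, and the fact that $\nu(C)$ generates ${\mathcal F}$ for uniqueness. Your added remarks on $\mu$ being proper and $\tilde{\mu}$ being completely isometric via Corollary \ref{tilde} are sound bookkeeping that the paper leaves implicit.
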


 \begin{proof}
 This essentially follows from Equation (\ref{ones}). If
$\tilde{\mu} \circ \tau \circ \nu^{-1}$ extends to a
$*$-homomorphism $\tau^* : {\mathcal F} \to \co({\mathcal D})$, then
the latter is the Busby invariant of a $C^*$-algebraic
 extension of
${\mathcal F}$ by ${\mathcal D}$.  By Theorem \ref{bumo} and Lemmas
\ref{five} and \ref{enf}, it is easy to see that this is a covering
extension.

 Because $\nu(C)$ generates ${\mathcal F}$,
$*$-homomorphisms on ${\mathcal F}$ are determined uniquely by their
restrictions to $\nu(C)$.  Thus there is at most one
$*$-homomorphism $\tau^* : {\mathcal F} \to \co({\mathcal D})$
satisfying $\tilde{\mu} \circ \tau = \tau^*_{\vert C}$.
\end{proof}

{\bf Remark.}  There may exist no covering extension of the type
mentioned in the last result, as is pointed out in the last Remark.

\begin{proposition}  \label{etc5}   For any separable
operator algebra $C$, and stable approximately unital operator
algebra $A$, there exists an essential split  extension of $C$ by
$A$. The middle term in this extension may be chosen to be nonunital
if we wish, or to be unital if $C$ is unital.
 \end{proposition}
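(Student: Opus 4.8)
The plan is to exploit the corona extension together with the Busby picture (Theorem \ref{bus}): to build an essential split extension of $C$ by $A$ it suffices to produce a completely contractive homomorphism $\eta : C \to \M(A)$ whose composite $\pi_A \circ \eta : C \to \co(A)$ is one-to-one. Indeed, by Theorem \ref{bus} an extension is split precisely when its Busby invariant is of the form $\pi_A \circ \eta$ with $\eta \in \mathrm{Mor}(C,\M(A))$, and by the discussion preceding Lemma \ref{cess} the extension is essential exactly when its Busby invariant is one-to-one. So the whole statement reduces to constructing such an $\eta$, and then choosing the middle term appropriately.

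First I would use stability to bring in an ample supply of multipliers. Since $A \cong A \otimes \Kdb$ completely isometrically, we have $\M(A) \cong \M(A \otimes \Kdb)$, and the latter contains a completely isometric copy of $\M(\Cdb \otimes \Kdb) \cong \Bdb$ acting as `ampliation' multipliers; concretely, $\M(A\otimes\Kdb) \supseteq 1_{\M(A)} \,\bar\otimes\, \Bdb$ in the appropriate sense. Because $C$ is separable, it is completely isometrically isomorphic to a subalgebra of $\Bdb$ (represent $C$ nondegenerately on a separable Hilbert space). Composing, we obtain a completely isometric homomorphism $\eta : C \to \M(A)$ landing in the copy of $\Bdb$ inside $\M(A)$ coming from the $\Kdb$ tensor factor. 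The key point is then that this $\eta$ remains completely isometric — in particular one-to-one — after passing to the corona algebra $\co(A) = \M(A)/A$: the image of $\Bdb$ inside $\M(A\otimes\Kdb)$ meets $A\otimes\Kdb$ only in the copy of $\Kdb$, so $\pi_A$ restricted to this copy of $\Bdb$ factors the quotient $\Bdb \to \Bdb/\Kdb$, which is completely isometric on the (unital, hence non-compact-operator-containing) subalgebra $C \subseteq \Bdb$ as long as we choose the representation of $C$ to be, say, of infinite multiplicity, so that $C \cap \Kdb = (0)$. This makes $\pi_A \circ \eta$ completely isometric, hence one-to-one, so the resulting split extension is essential.

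For the last sentence about the middle term: the split extension built from $\eta$ has middle term the pullback $\M(A)\oplus_{\co(A)}C$, whose Busby invariant is $\pi_A\circ\eta$; but any split extension with Busby invariant $\pi_A\circ\eta$ is strongly isomorphic to one with middle term $A \oplus^\infty C$ under the direct-sum gluing via $\eta$ — more precisely, one takes $B = \{(a + \eta(c), c) : a\in A, c\in C\} \subseteq \M(A)\oplus^\infty C$, which is $A \dotplus \eta(C)$ inside $\M(A)$, paired with $C$. This $B$ is unital iff $C$ is unital and $\eta$ is unital, which we can arrange when $C$ is unital by taking the representation of $C$ on $\Bdb$ to be unital; and if we do not insist on $C$ unital, or want $B$ nonunital, we simply do not demand $\eta(1_C)$ be a projection complementary to a unit, e.g.\ replace $\eta$ by an ampliation into a corner so that $B$ acquires no identity. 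The main obstacle I anticipate is the bookkeeping in the previous paragraph: verifying cleanly that the copy of $\Bdb$ sitting inside $\M(A\otimes\Kdb)$ intersects $A\otimes\Kdb$ in exactly $\Cdb\,\bar\otimes\,\Kdb$ and that $\pi_A$ is therefore completely isometric on $C$ — this is where one must be careful about which concrete identification $A\cong A\otimes\Kdb$ is used and about nondegeneracy/multiplicity of the representation of $C$, rather than anything deep.
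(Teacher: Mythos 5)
Your proposal is correct and is essentially the paper's own proof: both arguments use stability to embed $B(\ell^2)=\M(\Kdb)$ into $\M(A\otimes\Kdb)\cong\M(A)$, represent the separable algebra there faithfully with infinite multiplicity (prepending a zero summand when a nonunital middle term is wanted) so that its image meets $A$ only in $(0)$, and then read off splitness and essentiality from the Busby invariant $\pi_A\circ\eta$. The one small slip is your justification of \emph{complete} isometry of $\pi_A\circ\eta$ from $C\cap\Kdb=(0)$ alone --- for a nonselfadjoint $C$, injectivity of a quotient map on a subalgebra does not by itself give isometry (the paper obtains it by first extending to $C^*_{\rm max}(C)$, where a faithful $*$-homomorphism is automatically completely isometric, and infinite multiplicity would also do it directly) --- but since only injectivity of the Busby invariant is needed for the extension to be essential, this does not affect the statement.
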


 \begin{proof}   Let $\pi : C^*_{\rm max}(C) \to B(\ell^2)$ be
 a faithful $*$-representation: this is possible since
 $C^*_{\rm max}(C)$ is separable.  Furthermore, we can assume that
 $\pi(C^*_{\rm max}(C)) \cap \Kdb = (0)$, by replacing $\pi$ by
 $\pi \oplus \pi \oplus \cdots$ (unital case) or by
$0 \oplus \pi \oplus \pi \oplus \cdots$ (nonunital case).   Now
 $$B(\ell^2) = \M(\Kdb) \subset \M(A \otimes \Kdb)
 \cong \M(A) ,$$
 and so we obtain a faithful completely contractive representation
 $\theta : C^*_{\rm max}(C) \to \M(A)$ for which it is easy to see
 that
 $\theta(C^*_{\rm max}(C)) \cap A = (0)$.  Consider the maps
 $$C^*_{\rm max}(C) \to \M(A) \to \co(A) \to \co(C^*_{\rm max}(A))
 .$$
 These compose to a $*$-homomorphism, by \cite[Proposition 1.2.4]{BLM}.  Note that if $\theta(x)
 \in C^*_{\rm max}(A)$, then $$\theta(x)
 \in \M(A) \cap C^*_{\rm max}(A) \subset A^{\perp \perp} \cap C^*_{\rm max}(A)
 = A,$$
by \cite[Lemma A.2.3 (4)]{BLM}, so that $\theta(x) = 0$.  Thus the
composition of the
 maps in the last centered sequence is faithful, and so
 completely isometric.  Hence the associated morphisms $C^*_{\rm max}(C) \to \co(A)$ and
 $C \to \co(A)$ are completely isometric, and they factor through
 $\M(A)$.   \end{proof}

\begin{corollary}  \label{finise}  If we have a completely
essential
extension
$$E: \; \; \; 0 \to A
  \lra B \lra C \to 0 ,$$
  and a
$C^*$-cover $({\mathcal D},\mu)$ of $A$, then there exists  a
`smallest' or universal covering extension
$$E_{\rm min} \, : \; \; 0 \lra {\mathcal D} \lra {\mathcal E} \lra {\mathcal F} \lra 0 $$
 with first term ${\mathcal D}$.   More
particularly, $E_{\rm min}$ is a quotient extension of any other
covering extension of $E$ with first term ${\mathcal D}$. Also,
$E_{\rm min}$ is essential.
 \end{corollary}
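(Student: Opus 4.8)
The plan is to obtain $E_{\rm min}$ by running the construction from the proof of Lemma~\ref{en} and then shrinking its third term so that the resulting Busby invariant becomes one-to-one. First I would use the universal property of $C^*_{\rm max}(C)$ to extend the completely contractive homomorphism $\tilde{\mu}\circ\tau : C \to \co({\mathcal D})$ to a $*$-homomorphism $\tau^* : C^*_{\rm max}(C) \to \co({\mathcal D})$. Since $\tilde{\mu}$ is completely isometric by Corollary~\ref{tilde} and $\tau$ is completely isometric by Lemma~\ref{cess} (this is the one place where I use that $E$ is completely essential), the restriction of $\tau^*$ to $C$ is completely isometric. Hence $K := {\rm Ker}(\tau^*)$ is a boundary ideal for $C$ in $C^*_{\rm max}(C)$, so that ${\mathcal F} := C^*_{\rm max}(C)/K$ is a $C^*$-cover of $C$ and $\tau^*$ induces a one-to-one $*$-homomorphism $\bar{\tau} : {\mathcal F} \to \co({\mathcal D})$ with $\bar{\tau}\circ\nu = \tilde{\mu}\circ\tau$, where $\nu : C \to {\mathcal F}$ denotes the canonical completely isometric map. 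I then take $E_{\rm min}$ to be the pullback extension $0 \to {\mathcal D} \to {\mathcal E} \to {\mathcal F} \to 0$ constructed from $\bar{\tau}$ as in Theorem~\ref{bus}; here ${\mathcal E} = \M({\mathcal D}) \oplus_{\co({\mathcal D})} {\mathcal F}$ is a $C^*$-algebra, being a pullback of $C^*$-algebras along $*$-homomorphisms, and its Busby invariant is $\bar{\tau}$, which is one-to-one, so $E_{\rm min}$ will be essential.

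Next I would check that $E_{\rm min}$ really is a covering extension of $E$. Because $\bar{\tau}\circ\nu = \tilde{\mu}\circ\tau$ and $\mu : A \to {\mathcal D}$ is proper, Theorem~\ref{bumo} supplies a morphism of extensions from $E$ to $E_{\rm min}$ whose middle arrow $j : B \to {\mathcal E}$ is a complete isometry by Lemma~\ref{five}; thus $E$ is a subextension of $E_{\rm min}$, and Lemma~\ref{enf} then automatically gives that ${\mathcal E}$ is a $C^*$-cover of $B$, so that $E_{\rm min}$ is a covering extension of $E$ with first term ${\mathcal D}$. For the universal property, let $E' : 0 \to {\mathcal D} \to {\mathcal E}' \to {\mathcal F}' \to 0$ be any covering extension of $E$ with first term ${\mathcal D}$ (using the same $\mu$), with Busby invariant $\tau' : {\mathcal F}' \to \co({\mathcal D})$ and canonical map $\nu' : C \to {\mathcal F}'$; by Equation~(\ref{ones}) one has $\tau'\circ\nu' = \tilde{\mu}\circ\tau$. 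The universal property of $C^*_{\rm max}(C)$ yields a surjective $*$-homomorphism $q : C^*_{\rm max}(C) \to {\mathcal F}'$ extending $\nu'$, and then $\tau'\circ q$ is a $*$-homomorphism $C^*_{\rm max}(C) \to \co({\mathcal D})$ agreeing with $\tilde{\mu}\circ\tau$ on $C$, hence $\tau'\circ q = \tau^*$ by uniqueness of such extensions. Therefore ${\rm Ker}(q) \subseteq K$, so $q$ drops to a surjective $*$-homomorphism $\theta : {\mathcal F}' \to {\mathcal F}$ with $\theta\circ\nu' = \nu$ and $\bar{\tau}\circ\theta = \tau'$. Applying Theorem~\ref{bumo} once more, now with the identity on ${\mathcal D}$ and with $\theta$, yields a morphism of extensions $E' \to E_{\rm min}$ whose middle arrow is a complete quotient map by Lemma~\ref{five}; that is, $E_{\rm min}$ is a quotient extension of $E'$. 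Uniqueness of $E_{\rm min}$ up to strong isomorphism then follows formally from this universal property together with the uniqueness clause of Theorem~\ref{bumo}.

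The genuinely substantive point, and the only place where the hypothesis that $E$ is completely essential enters, is the claim that $K = {\rm Ker}(\tau^*)$ is a boundary ideal, i.e.\ that $\tau^*$ restricts to a complete isometry on $C$: without complete essentiality the quotient $C^*_{\rm max}(C)/{\rm Ker}(\tau^*)$ need not contain $C$ completely isometrically, so there is no candidate for the third term of $E_{\rm min}$ and the construction collapses. Everything else is bookkeeping of the type already carried out for Lemmas~\ref{en} and~\ref{enf}: the chain of identities relating $\tau$, $\tau^*$, $\bar{\tau}$, $\nu$, $\nu'$, $q$, $\theta$, and the two appeals to the five-lemma, where one must read ``quotient extension'' in the sense of Lemma~\ref{five} so that the middle map of $E' \to E_{\rm min}$ is genuinely a complete quotient map rather than merely a morphism.
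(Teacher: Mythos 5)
Your proof is correct and follows essentially the same route as the paper: the paper takes ${\mathcal F}$ to be the $C^*$-subalgebra of $\co({\mathcal D})$ generated by $\kappa(C)$, where $\kappa = \tilde{\mu}\circ\tau$ is a complete isometry precisely because the extension is completely essential, uses the inclusion ${\mathcal F}\hookrightarrow\co({\mathcal D})$ as the Busby invariant (so essentiality of $E_{\rm min}$ is immediate), and obtains existence and the quotient property from Proposition \ref{etc3} and Theorem \ref{bumo} exactly as you do. Your detour through $C^*_{\rm max}(C)$ and the boundary ideal ${\rm Ker}(\tau^*)$ yields the same ${\mathcal F}$ ($*$-isomorphic to the image of $\tau^*$) and the same universal-property argument, so the difference is purely presentational.
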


\begin{proof}
 We have a complete isometry
$\kappa = \tilde{\mu} \circ \tau : C \to \co(A) \to \co({\mathcal
D})$. The $C^*$-algebra ${\mathcal F}$ generated by $\kappa(C)$ is a
$C^*$-cover of $C$.  By Proposition \ref{etc3},   the inclusion map
${\mathcal F} \to \co({\mathcal D})$ is the Busby invariant of a
covering extension $E_{\rm min}$ with first and third terms
${\mathcal D}$ and ${\mathcal F}$.   Given any other covering
extension of $E$ with first term ${\mathcal D}$  and last term
${\mathcal G}$ say, there is a $*$-homomorphism $\pi : {\mathcal G}
\to \co({\mathcal D})$ such that $\pi(C) = \tilde{\mu}(\tau(C))
\subset {\mathcal F}$. It follows that $\pi : {\mathcal G} \to
{\mathcal F}$. The conditions of Theorem
\ref{bumo} are met, so that $E_{\rm min}$ is a quotient
extension of the extension of ${\mathcal G}$.
\end{proof}

{\bf Remark.}  As in \cite{ELP}, the above universal covering
extension may be described as a pushout.

\begin{proposition}  \label{exce}  With notation as in
Proposition  {\rm \ref{etc3}}, if ${\mathcal D}$ and ${\mathcal F}$
are $C^*$-envelopes (resp.\ maximal $C^*$-covers), and if a covering
extension of $E$ does exist with ${\mathcal D}$ and ${\mathcal F}$
as first and third terms,
then the middle term ${\mathcal E}$ is also a $C^*$-envelope (resp.\
a maximal $C^*$-cover).
\end{proposition}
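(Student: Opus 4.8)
The plan is to handle the two cases ($C^*$-envelope, maximal $C^*$-cover) separately, in each case exploiting the characterizing universal property of the relevant $C^*$-cover together with the exactness of the covering extension and the five lemma (Lemma \ref{five}). First I would set up notation: we have a covering extension
$$0 \lra {\mathcal D} \lra {\mathcal E} \lra {\mathcal F} \lra 0$$
with ${\mathcal D} = C^*_e(A)$ (resp.\ $C^*_{\rm max}(A)$) and ${\mathcal F} = C^*_e(C)$ (resp.\ $C^*_{\rm max}(C)$), and, by Lemma \ref{enf}, ${\mathcal E}$ is automatically a $C^*$-cover of $B$. The goal is to show this is the \emph{right} $C^*$-cover of $B$.

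For the maximal case, I would use the universal property of $C^*_{\rm max}$. Given a completely contractive homomorphism $\pi : B \to {\mathcal G}$ into a $C^*$-algebra, I want to extend it to a $*$-homomorphism on ${\mathcal E}$. The restriction $\pi_{|A} : A \to {\mathcal G}$ extends to a $*$-homomorphism $\pi_1 : {\mathcal D} = C^*_{\rm max}(A) \to {\mathcal G}$; note $\pi_1({\mathcal D})$ is the $C^*$-subalgebra of $\pi({\mathcal G})$ generated by $\pi(A)$, and by Lemma \ref{id} it is an ideal in the $C^*$-subalgebra generated by $\pi(B)$. Then $\pi$ induces a completely contractive homomorphism $\bar{\pi} : C = B/A \to {\mathcal G}/\pi_1({\mathcal D})$ (using Lemma \ref{try2} or just exactness), which extends to a $*$-homomorphism ${\mathcal F} = C^*_{\rm max}(C) \to {\mathcal G}/\pi_1({\mathcal D})$; here one uses the universality of the maximal cover (after passing, if needed, to unitizations as in Lemma \ref{max}, since the relevant homomorphism need only be compressed to a nondegenerate one). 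Now I have morphisms of the two extensions at the first and third terms, and Theorem \ref{bumo} produces the middle $*$-homomorphism ${\mathcal E} \to {\mathcal G}$ extending $\pi$, establishing the universal property for ${\mathcal E}$; one then invokes the observation in the introduction that $C^*_{\rm max}(B)$ is characterized by this property. Actually, a cleaner route, paralleling Lemma \ref{max} and Corollary \ref{cos}: the $C^*$-subalgebra of $C^*_{\rm max}(B)$ generated by $A$ \emph{is} $C^*_{\rm max}(A)$ and the quotient is $C^*_{\rm max}(C)$ by Lemma \ref{max}; since by Proposition \ref{etc3} there is at most one covering extension with given first and third terms, and the covering extension built from $C^*_{\rm max}(B)$ has first term $C^*_{\rm max}(A)$ and third term $C^*_{\rm max}(C)$, the hypothesised covering extension must be strongly isomorphic to it, so ${\mathcal E} = C^*_{\rm max}(B)$. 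This second route is the one I would actually write down, as it reduces everything to Lemma \ref{max} and Proposition \ref{etc3}.

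For the $C^*$-envelope case, the analogous uniqueness trick works: by Lemma \ref{cenvi}, the covering extension built from $C^*_e(B)$ has first term $C^*_e(A)$; and the third term is the $C^*$-subalgebra of $C^*_e(B)$ generated by the image of $C$. One must check this third term is $C^*_e(C)$; by Lemma \ref{try2} we have $C = B/A \hookrightarrow C^*_e(B)/C^*_e(A)$ completely isometrically, so the $C^*$-algebra it generates there is \emph{a} $C^*$-cover of $C$, and one needs it to be the minimal one, i.e.\ it should have the universal quotient property of $C^*_e(C)$. This is exactly what one needs to be careful about, since $C^*_e$ does \emph{not} behave as nicely as $C^*_{\rm max}$ under quotients in general (indeed the last Remark before this Proposition exhibits a covering extension whose first two terms are $C^*$-envelopes but whose third term is not $C^*_e(C)$, or rather whose quotient $C^*_e(B)/C^*_e(A) \ne C^*_e(B/A)$). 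So the correct statement is the conditional one as given: \emph{if} a covering extension with ${\mathcal D} = C^*_e(A)$ and ${\mathcal F} = C^*_e(C)$ exists, then — by uniqueness in Proposition \ref{etc3} applied to the $C^*$-cover $C^*_e(B)$ of $B$, whose associated covering extension has first term $C^*_e(A)$ (Lemma \ref{cenvi}) and third term some $C^*$-cover ${\mathcal F}'$ of $C$ that surjects onto $C^*_e(C) = {\mathcal F}$ — one gets that the given covering extension is a quotient of the one from $C^*_e(B)$; combined with the fact that $C^*_e(B)$ is itself a quotient of every $C^*$-cover of $B$, a short diagram chase (five lemma) forces ${\mathcal E} = C^*_e(B)$.

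The main obstacle I anticipate is the $C^*$-envelope direction: specifically, verifying that the third term of the covering extension associated (via the proof of Proposition \ref{etc2}) to $C^*_e(B)$ does not a priori equal $C^*_e(C)$, so one cannot simply say ``the covering extension from $C^*_e(B)$ has first and third terms the $C^*$-envelopes of $A$ and $C$ and apply Proposition \ref{etc3}.'' The resolution is to use the \emph{hypothesis} that the given covering extension exists with third term $C^*_e(C)$, compare it with the $C^*_e(B)$-covering extension via the order-preserving maps of Lemma \ref{en}, and run the five lemma: the covering extension from $C^*_e(B)$ maps onto any covering extension with first term $C^*_e(A)$ (since $C^*_e(B)$ maps onto any $C^*$-cover of $B$ and these maps respect the extension structure by Lemma \ref{en}), and a $*$-homomorphism between $C^*$-covers of $C$ from ${\mathcal F}' \to C^*_e(C)$ that restricts to the identity on $\nu(C)$ need not be injective — but then the middle $C^*$-algebra ${\mathcal E}$ is a quotient of $C^*_e(B)$; conversely the minimality of $C^*_e(B)$ among $C^*$-covers of $B$ forces the quotient map ${\mathcal E} \to C^*_e(B)$ to exist too, and these being mutually quotient $C^*$-covers of the same operator algebra $B$ they are equal. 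I would also remark that by Lemma \ref{cess}/Lemma \ref{enf} no extra essentiality hypothesis is needed here.
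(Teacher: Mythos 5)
Your overall strategy --- compare the given covering extension with the one built on $C^*_{\rm max}(B)$ (resp.\ $C^*_{\rm e}(B)$) and invoke the uniqueness clause of Proposition \ref{etc3} --- is exactly the paper's, and your ``cleaner route'' for the maximal case is correct and complete: Lemma \ref{max} identifies the first and third terms of the $C^*_{\rm max}(B)$-covering extension as $C^*_{\rm max}(A)$ and $C^*_{\rm max}(C)$, and then the uniqueness in Proposition \ref{etc3} forces ${\mathcal E} = C^*_{\rm max}(B)$. You also correctly isolate the real difficulty in the envelope case, namely that the third term of the covering extension built on $C^*_{\rm e}(B)$ is not a priori $C^*_{\rm e}(C)$.

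However, your resolution of that difficulty reverses the direction of domination between $C^*$-covers at several points, and as written the argument does not close. The $C^*$-envelope is the \emph{minimal} $C^*$-cover: every $C^*$-cover of $B$ admits a surjective $*$-homomorphism \emph{onto} $C^*_{\rm e}(B)$ fixing $B$, not the other way around. So the claims ``$C^*_{\rm e}(B)$ maps onto any $C^*$-cover of $B$,'' ``the given covering extension is a quotient of the one from $C^*_{\rm e}(B)$,'' and ``${\mathcal E}$ is a quotient of $C^*_{\rm e}(B)$'' are all backwards; only one of your two ``mutually quotient'' maps exists a priori, so the final step collapses. The correct chase (which is the paper's) runs in the opposite direction: ${\mathcal E}$ dominates $C^*_{\rm e}(B)$, and by the order-preservation in Lemma \ref{en} the first and third terms of the $C^*_{\rm e}(B)$-covering extension are therefore \emph{dominated by} ${\mathcal D} = C^*_{\rm e}(A)$ and ${\mathcal F} = C^*_{\rm e}(C)$; since these are minimal among the $C^*$-covers of $A$ and of $C$, the dominated terms must \emph{equal} $C^*_{\rm e}(A)$ and $C^*_{\rm e}(C)$. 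Now the $C^*_{\rm e}(B)$-covering extension and the given one share first and third terms, and the uniqueness in Proposition \ref{etc3} gives ${\mathcal E} = C^*_{\rm e}(B)$. (Equivalently: the surjection ${\mathcal E} \to C^*_{\rm e}(B)$ restricts, resp.\ descends, to surjections $C^*_{\rm e}(A) \to {\mathcal D}'$ and $C^*_{\rm e}(C) \to {\mathcal F}'$ fixing $A$ and $C$; these are isomorphisms by minimality/rigidity of the envelopes, and Lemma \ref{five} then makes the middle map a complete isometry.) With the directions corrected, your outline becomes the paper's proof.
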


 \begin{proof}
 To see
the first claim, note that if this middle term is ${\mathcal E}$,
which dominates $C^*_{\rm e}(B)$, then by the fact in Lemma \ref{en}
about the two maps being order preserving, we see that there exists
a covering extension with middle term $C^*_{\rm e}(B)$, and other
two terms dominated by, and hence equal to, $C^*_{\rm e}(A)$ and
$C^*_{\rm e}(C)$ respectively. By the remark at the start of the
paragraph, ${\mathcal E} = C^*_{\rm e}(B)$.  We leave the second as
an exercise, using a similar idea.
\end{proof}

A covering extension of the type in the last Proposition with all
terms $C^*$-envelopes, if it exists, will be called a {\em
$C^*$-enveloping extension}, and the extension itself (of $C$ by
$A$) will be called a {\em $C^*$-enveloped extension}.   If the
Busby invariant $\tau$ of an extension of $C$ by a $C^*$-algebra $A$
 extends to a $*$-representation
$C^*_{\rm e}(C) \to \co(A)$,
 then the extension $E$ is
$C^*$-enveloped.  In particular, any nonselfadjoint operator algebra
extension of a $C^*$-algebra is $C^*$-enveloped.

\medskip

{\bf Example.}  There are many very interesting and topical examples
of $C^*$-enveloped extensions, for example coming from the
generalizations of Gelu Popescu's noncommutative disk algebra $A_n$
which have attracted much interest lately.  The way in which these
are usually obtained is one finds a `Toeplitz-like' $C^*$-algebra
${\mathcal E}$ with a quotient `Cuntz-like' $C^*$-algebra ${\mathcal
F}$, which in turn is generated by a nonselfadjoint operator algebra
$A$.  In Popescu's original setting \cite{Pop} the picture is:
$$
\xymatrix{
    0 \ar[r] &  \Kdb  \ar[r] & C^*(S_1, \cdots, S_n)
    \ar[r] &
{\mathcal O}_n   \ar[r] &  0\\
    0  \ar[r] &   \Kdb  \ar@{=}[u] \ar[r]
    &          \circ
      \ar[r] \ar@{^{(}->}[u] &
    A_n \ar[r] \ar@{^{(}->}[u] & 0 \\
    } $$
  (When $n = 1$,
  $A_n$ is just the disk algebra,
  and the top row  is just the
  Toeplitz extension by the compacts.)
   In any such setting, by our earlier theory (e.g.\ Theorem
    \ref{new}),
    there is a unique completion of the diagram to a subextension.
    Indeed, in the example above, the missing term in the diagram is
    the inverse image
    under the top right arrow
    $\beta : C^*(S_1, \cdots, S_n) \to {\mathcal O}_n$ of the bottom right algebra
$A_n$, which is the closure in $C^*(S_1, \cdots, S_n)$ of $\Kdb +
A_n$.
If one can show that the top
right $C^*$-algebra (${\mathcal F}$ in language above) is a
$C^*$-envelope of the bottom right algebra, and doing this is
currently quite an industry (initiated by Muhly and Solel, see e.g.\
\cite{MS,KK}), then it follows from Proposition \ref{exce}, that the
covering extension is $C^*$-enveloping.

\medskip

 {\bf Remark.}  We give
another proof of Lemma \ref{cenvi}: If $A$ is a closed approximately
unital ideal in an operator algebra $B$, then by the proof of Lemma
\ref{en} we have a covering extension
$$0 \lra C^*_e(A) \to {\mathcal E} \lra C^*_{\rm max}(C) \lra 0 .$$
On the other hand, by Proposition  \ref{etc2}, there is another
covering extension with middle term $C^*_e(B)$.  Since the latter is
dominated by ${\mathcal E}$ in the ordering of $C^*$-covers, it
follows from Lemma \ref{en} that the first term in the last covering
extension is dominated by, and hence equals, $C^*_e(A)$. Thus
 $C^*_e(A)$ is an ideal in $C^*_e(B)$.

Similar reasoning gives another proof of Lemma \ref{max}. That is:

\begin{lemma}  \label{max2}  If the middle $C^*$-algebra in
a  covering extension  is a
 maximal $C^*$-cover, then all the $C^*$-algebras in the covering extension
 are maximal $C^*$-covers.
 \end{lemma}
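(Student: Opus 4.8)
The plan is to run the covering-extension argument used in the Remark that re-proves Lemma~\ref{cenvi}, but with $C^*_{\rm max}$ playing the role that $C^*_{\rm e}$ played there; the key input is the order-preservation statement of Lemma~\ref{en}. Fix a covering extension $0 \to \mathcal{D} \to \mathcal{E} \to \mathcal{F} \to 0$ of an extension $0 \to A \to B \to C \to 0$, with $\mathcal{E} = C^*_{\rm max}(B)$. The goal is to show $\mathcal{D} = C^*_{\rm max}(A)$ and $\mathcal{F} = C^*_{\rm max}(C)$ as $C^*$-covers.

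First I would recall, from the proof of Lemma~\ref{en}, that for \emph{every} $C^*$-cover $(\mathcal{D}_0,\mu_0)$ of $A$ one can build a covering extension of $0 \to A \to B \to C \to 0$ whose first term is $\mathcal{D}_0$ and whose third term is $C^*_{\rm max}(C)$: the map $\tilde{\mu}_0 \circ \tau$ on $C$ extends to a $*$-homomorphism out of $C^*_{\rm max}(C)$, which is then the Busby invariant of such an extension (the middle term is a $C^*$-cover of $B$ by Lemma~\ref{enf}, and the relevant vertical maps are complete isometries by Theorem~\ref{bumo} and Lemma~\ref{five}). Applying this with $\mathcal{D}_0 = C^*_{\rm max}(A)$ yields a covering extension $E'$ with terms $C^*_{\rm max}(A)$, some $\mathcal{E}'$, and $C^*_{\rm max}(C)$.

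Next I would invoke Proposition~\ref{etc2}: a covering extension is determined up to strong isomorphism by the equivalence class of its middle $C^*$-cover of $B$, and the passage from that middle cover to the first and third terms is exactly the pair of maps studied in Lemma~\ref{en}. Since $\mathcal{E} = C^*_{\rm max}(B)$ dominates every $C^*$-cover of $B$, in particular $\mathcal{E}'$, order-preservation (Lemma~\ref{en}) gives that $\mathcal{D}$ dominates $C^*_{\rm max}(A)$ and $\mathcal{F}$ dominates $C^*_{\rm max}(C)$. On the other hand, $\mathcal{D}$ and $\mathcal{F}$ are themselves $C^*$-covers of $A$ and $C$ respectively (by the definition of a covering extension), hence each is dominated by the corresponding maximal cover. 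Antisymmetry of the ordering on equivalence classes of $C^*$-covers then forces $\mathcal{D} = C^*_{\rm max}(A)$ and $\mathcal{F} = C^*_{\rm max}(C)$, as desired.

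I do not expect a genuine obstacle: the argument is essentially formal once Lemma~\ref{en} is in hand. The two points to keep straight are (i) the direction of ``dominates'' when applying order-preservation in Lemma~\ref{en} --- here one exploits that $C^*_{\rm max}$ is the \emph{largest} cover, dual to the use of $C^*_{\rm e}$ being smallest in the Remark on Lemma~\ref{cenvi} --- and (ii) that the auxiliary covering extension $E'$ really does have $C^*_{\rm max}(C)$ as its third term, which is immediate from the construction in the proof of Lemma~\ref{en}. One should also note in passing that, since this is offered as ``similar reasoning'' giving another proof of Lemma~\ref{max}, the argument must not itself invoke Lemma~\ref{max}; the sketch above does not.
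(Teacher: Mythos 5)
Your argument is correct and is precisely the ``similar reasoning'' the paper alludes to (but does not write out): build the auxiliary covering extension with first term $C^*_{\rm max}(A)$ and third term $C^*_{\rm max}(C)$ via the surjectivity construction in the proof of Lemma~\ref{en}, then combine order-preservation of the two maps with the maximality of $C^*_{\rm max}(A)$ and $C^*_{\rm max}(C)$ to force equality. Your parenthetical cautions --- the direction of domination being dual to the $C^*_{\rm e}$ case, and the need to avoid circularity with Lemma~\ref{max} --- are exactly the right points to flag.
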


 If $A$ is an approximately unital operator algebra, then
 since $\co(A) \subset \co(C^*_{\rm max}(A))$ by Corollary
\ref{tilde}, any morphism $C \to \co(A)$ extends uniquely to a
$*$-homomorphism $C^*_{\rm max}(C) \to  \co(C^*_{\rm max}(A))$.
  By the Busby correspondence, we see that
  this defines a one-to-one map from {\bf Ext}$(C,A)$
   into {\bf Ext}$(C^*_{\rm max}(C),C^*_{\rm
max}(A))$.   This map is not in general surjective.  However it will
be if $A$ is a $C^*$-algebra:

\begin{corollary} \label{maxj}  Let ${\mathcal D}$ be a $C^*$-algebra.
 There is a canonical bijection from {\bf Ext}$(C,{\mathcal D})$
onto {\bf Ext}$(C^*_{\rm max}(C),{\mathcal D})$, taking the
 split extensions onto the split extensions.
\end{corollary}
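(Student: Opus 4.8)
The plan is to establish the bijection by comparing the map $E \mapsto E^*$ (extending the Busby invariant to $C^*_{\rm max}$) already described in the excerpt, in the two directions. First I would recall from the paragraph immediately preceding the corollary that for any approximately unital $A$, the inclusion $\co(A) \subset \co(C^*_{\rm max}(A))$ of Corollary~\ref{tilde} lets us extend each $\tau \in {\rm Mor}(C,\co(A))$ uniquely to a $*$-homomorphism $\tau^* : C^*_{\rm max}(C) \to \co(C^*_{\rm max}(A))$, giving a well-defined injection ${\bf Ext}(C,A) \hookrightarrow {\bf Ext}(C^*_{\rm max}(C), C^*_{\rm max}(A))$. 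Specializing to $A = {\mathcal D}$ a $C^*$-algebra, we have $C^*_{\rm max}({\mathcal D}) = {\mathcal D}$ (a $C^*$-algebra is its own maximal $C^*$-cover, since any $*$-homomorphism out of it is already a $*$-homomorphism), so this injection becomes a map ${\bf Ext}(C,{\mathcal D}) \hookrightarrow {\bf Ext}(C^*_{\rm max}(C),{\mathcal D})$.

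Next I would prove surjectivity, which is the crux. Given an element of ${\bf Ext}(C^*_{\rm max}(C),{\mathcal D})$, by Theorem~\ref{bus} it corresponds to a morphism (here automatically a $*$-homomorphism, since the domain is a $C^*$-algebra) $\rho : C^*_{\rm max}(C) \to \co({\mathcal D})$. Restricting $\rho$ along the canonical completely isometric embedding $C \hookrightarrow C^*_{\rm max}(C)$ yields a completely contractive homomorphism $\tau = \rho_{|C} : C \to \co({\mathcal D})$, i.e.\ an element of ${\rm Mor}(C,\co({\mathcal D})) \cong {\bf Ext}(C,{\mathcal D})$. The point is that $\tau^* = \rho$: by the universal property of $C^*_{\rm max}(C)$, the $*$-homomorphism $C^*_{\rm max}(C) \to \co(C^*_{\rm max}({\mathcal D})) = \co({\mathcal D})$ extending $\tau$ is unique, and $\rho$ is such an extension; hence the map is onto. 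Injectivity is immediate: if $\tau_1^*$ and $\tau_2^*$ give strongly isomorphic extensions of $C^*_{\rm max}(C)$ by ${\mathcal D}$, then since strong isomorphism here (equality in ${\bf Ext}$) amounts to equality of Busby invariants by Theorem~\ref{bus}, we get $\tau_1^* = \tau_2^*$, and restricting to $C$ gives $\tau_1 = \tau_2$, so the original extensions coincide in ${\bf Ext}(C,{\mathcal D})$.

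Finally, for the statement about split extensions, I would invoke the split-extension criterion in Theorem~\ref{bus}: an extension with Busby invariant $\tau$ is split precisely when $\tau = \pi_{\mathcal D} \circ \eta$ for some $\eta \in {\rm Mor}(C,\M({\mathcal D}))$. If $\tau = \pi_{\mathcal D}\circ\eta$, then $\eta$ extends (using $\M({\mathcal D}) \subset \M(C^*_{\rm max}({\mathcal D})) = \M({\mathcal D})$, trivially here, and the universal property of $C^*_{\rm max}(C)$) to a $*$-homomorphism $\eta^* : C^*_{\rm max}(C) \to \M({\mathcal D})$ with $\pi_{\mathcal D}\circ\eta^* = \tau^*$, so the image extension is split; conversely, if $\tau^* = \pi_{\mathcal D}\circ\zeta$ for a $*$-homomorphism $\zeta : C^*_{\rm max}(C) \to \M({\mathcal D})$, then $\eta := \zeta_{|C}$ witnesses that $\tau$ is split, again using uniqueness to identify $\pi_{\mathcal D}\circ\zeta$ restricted to $C$ with $\tau$. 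The main obstacle I anticipate is purely bookkeeping: making sure the identification $C^*_{\rm max}({\mathcal D}) = {\mathcal D}$ and the resulting identifications $\M(C^*_{\rm max}({\mathcal D})) = \M({\mathcal D})$, $\co(C^*_{\rm max}({\mathcal D})) = \co({\mathcal D})$ are used consistently, and that ``restriction to $C$'' is genuinely inverse to ``extend to $C^*_{\rm max}(C)$'' — which follows cleanly from the uniqueness clause in the universal property of $C^*_{\rm max}$ together with the injectivity of $C \hookrightarrow C^*_{\rm max}(C)$, so no real difficulty should arise.
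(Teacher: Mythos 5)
Your proposal is correct and follows essentially the same route as the paper: the paper's proof simply invokes the universal property of $C^*_{\rm max}(C)$ (noting that any morphism $C^*_{\rm max}(C)\to\co({\mathcal D})$ is the unique extension of its restriction to $C$, since $\co({\mathcal D})$ is a $C^*$-algebra and $C$ generates $C^*_{\rm max}(C)$) together with the Busby correspondence of Theorem \ref{bus}, and leaves the split-extension assertion to the reader. Your write-up just fills in the surjectivity, injectivity, and splitting details that the paper omits.
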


\begin{proof}
The first assertion is clear from the
discussion above, since any  morphism $C^*_{\rm max}(C) \to
  \co({\mathcal D})$ uniquely extends a  morphism $C \to
  \co({\mathcal D})$, by the universal property of $C^*_{\rm max}$.
We  leave the last assertion to the reader (see also \cite{Mth}).
 \end{proof}

In particular, {\bf Ext}$(C) = {\rm {\bf Ext}}(C^*_{\rm max}(C))$,
where as usual {\bf Ext}$(\cdot)$ means extensions by $\Kdb$.
We  study the associated semigroup/group in the sequel paper,
which turns out to have very many of the important properties
that one has in the $C^*$-algebra case.    It is rarely a group; 
if one wants a group one can look at 
the invertible elements of {\bf Ext}$(C)$, or at the 
variant of $Ext$ corresponding to $C^*$-enveloping extensions.


\begin{thebibliography}{99}

\bibitem{SOC}  W. B. Arveson, Subalgebras of $C^{*}$-algebras,
{\em Acta Math.\ } {\bf 123} (1969), 141--224.


\bibitem{Duke}  W. B. Arveson, Notes on extensions of $C\sp{\sp*}$-algebras,
{\em Duke Math. J.} {\bf  44}  (1977),  329--355.

\bibitem{Arvnt} W. B. Arveson,  Notes on the unique
extension property, Unpublished note (2003), available from
www.math.berkeley.edu/$\sim$arveson.

\bibitem{BDL}  W. G. Bade, H. G. Dales, and Z. A. Lykova, 
{\em Algebraic and strong 
splittings of extensions of Banach algebras,}  Mem. Amer. Math. Soc.  {\bf 137}
  (1999),  no. 656.

\bibitem{BO}  I. D. Berg and C. L. Olsen, {\em  
A note on almost-commuting operators,}
 Proc. Roy. Irish Acad. Sect. A {\bf 81} (1981),  43--47. 

\bibitem{Bl1} B. Blackadar, {\em $K$-theory for operator algebras,}
Second edition,  Math.\ Sci.\ Res.\ Inst.\ Pub, 5, Cambridge
University Press, Cambridge, 1998.

\bibitem{Bl}  B. Blackadar, {\em
 Operator algebras. Theory of $C\sp *$-algebras and von Neumann algebras,}
Encyclopaedia of Mathematical Sciences, 122,  Springer-Verlag, Berlin, 2006.

\bibitem{BHN} D. P. Blecher, D. M. Hay, and M. Neal,
Hereditary subalgebras of operator algebras,
 To appear {\em Journal of
Operator Theory}, math.OA/0512417.

\bibitem{BLM}  D. P. Blecher
and C.  Le Merdy, {\em Operator algebras and their modules---an
operator space approach,} Oxford Univ.\  Press, Oxford (2004).

\bibitem{BDF}  L. G. Brown, R. G. Douglas, P. A. Fillmore,
 Extensions of $C\sp*$-algebras and $K$-homology, {\em  Ann. of Math.} {\bf 105}
  (1977), 265--324.


\bibitem{Bus}  R.
C. Busby, Double centralizers and extensions of $C\sp{*} $-algebras.
{\em
 Trans. Amer. Math. Soc.} {\bf 132}  (1968), 79--99.


\bibitem{DM}  M. Dritschel and S. McCullouch, Boundary
representations for families of representations of operator algebras
and spaces, {\em   J. Operator Theory} {\bf   53}  (2005),
159--167.

\bibitem{ELP}  S. Eilers, T. A. Loring, and G. K. Pedersen,
 Morphisms of extensions of $C\sp *$-algebras: pushing forward
the Busby invariant, {\em Adv. Math.}  {\bf 147}  (1999),  74--109.


\bibitem{IEOO}  M. Hamana,
Injective envelopes of operator systems, {\em Publ.\ R.I.M.S.\ Kyoto
Univ.\ } {\bf 15} (1979), 773--785.

\bibitem{Ham}   M. Hamana, Triple  envelopes and Silov boundaries of
operator
spaces, {\em  Math.\ J.\ Toyama University} {\bf 22} (1999), 77--93.



\bibitem{Ham2}  M. Hamana,  Tensor products for monotone complete $C\sp{*} $-algebras. I, {\em Japan. J. Math. (N.S.)} {\bf  8}  (1982),  259--283.


\bibitem{HWW} P. Harmand, D. Werner, and W. Werner,
{\em $M$-ideals in Banach spaces and Banach algebras,}
 Lecture Notes in Math.,  1547, Springer-Verlag, Berlin--New York, 1993.

\bibitem{Hay} D. M. Hay,  Closed projections and peak interpolation for
operator algebras, to appear {\em J.\ of Integral Eq.\ Oper.\ Th},
math.OA/0512353.



\bibitem{KP}  M. Kaneda and V. I. Paulsen,
Characterizations of essential ideals as operator modules over
$C\sp *$-algebras,  {\em   J. Operator Theory}
{\bf  49}  (2003),   245--262.


\bibitem{KK}  D. W. Kribs and E. Katsoulis,
Tensor algebras of $C^*$-correspondences and their $C^*$-envelopes,
{\em  J. Funct. Anal.} {\bf 234} (2006), 226-233.

\bibitem{RONC}  M. J. McAsey and P. S.  Muhly,
Representations of nonselfadjoint crossed products, {\em  Proc.\
London Math.\ Soc.\ }{\bf 47} (1983), 128--144.

\bibitem{Mey}  R. Meyer, Adjoining a unit to an operator algebra,
 {\em J.\ Operator Theory} {\bf  46} (2001), 281--288.


\bibitem{MS}  P. S. Muhly and B. Solel,  Tensor
algebras over $C^*$-correspondences: representations, dilations, and
$C^*$-envelopes, {\em  J.\ Funct.\ Anal.\ } {\bf 158}  (1998),
389--457.

\bibitem{Palm}  T. W. Palmer, {\em Banach algebras and the general
theory of $*$-algebras, Vol.\ I.\ Algebras and Banach algebras,}
Encyclopedia of Math.\ and its Appl., 49, Cambridge University
Press, Cambridge, 1994.

\bibitem{Pnbook}   V. I. Paulsen, {\em Completely bounded maps and operator
algebras,} Cambridge Studies in Advanced Math., 78, Cambridge
University Press, Cambridge, 2002.


\bibitem{Peds} G. K. Pedersen,  {\em Extensions of $C\sp
*$-algebras,}
In {\em  Operator algebras and quantum field theory (Rome, 1996)},
1--35, Int. Press, Cambridge, MA, 1997.


\bibitem{Ped2} G. K. Pedersen,
Pullback and pushout constructions in $C\sp *$-algebra theory,
  {\em J. Funct. Anal.} {\bf  167}  (1999), 243--344.


\bibitem{Pop}  G. Popescu, Von Neumann inequality for $(B(H)^n)_1$,
{\em  Math. Scand.} {\bf  68}  (1991),  292--304.

\bibitem{Mth} M. K. Royce, {\em Ph.\ D.\ dissertation}, University of
Houston, in preparation.
 \end{thebibliography}
\end{document}